\newtheorem{lemma}{Lemma}[section]
\newtheorem{teo}[lemma]{Theorem}
\newtheorem{prop}[lemma]{Proposition}
\newtheorem{mainteo}{Theorem}
\newtheorem{maincor}[mainteo]{Corollary}
\newtheorem{mainquest}[mainteo]{Question}
\theoremstyle{definition}
\newtheorem{defn}[lemma]{Definition}
\newtheorem{notation}[lemma]{Notation}
\newtheorem{es}[lemma]{Example}
\newtheorem{oss}[lemma]{Remark}
\newtheorem{maindef}[mainteo]{Definition}
\newtheorem{claim}{Claim}[subsection]
\let\temp\phi
\let\phi\varphi
\let\varphi\temp
\let\temp\epsilon
\let\epsilon\varepsilon
\let\varepsilon\temp
\DeclareMathOperator{\sgn}{sgn}
\DeclareMathOperator{\coker}{{{{coker}}}}
\DeclareMathOperator{\im}{{{{Im}}}}
\DeclareMathOperator{\tors}{{{{tors}}}}
\DeclareMathOperator{\id}{{{{Id}}}}
\DeclareMathOperator{\supp}{supp}
\DeclareMathOperator{\vol}{Vol}
\newcommand{\str}{{{\mathrm{str}}}}
\newcommand{\fvz}{{{\operatorname{FV}_{\Z}}}}
\newcommand{\N}{\ensuremath {\mathbb{N}}}
\newcommand{\R} {\ensuremath {\mathbb{R}}}
\newcommand{\SL}{{\mathrm{SL}}}
\newcommand{\PSL}{{\mathrm{PSL}}}
\newcommand{\GL}{{\mathrm{GL}}}
\newcommand{\Z} {\ensuremath {\mathbb{Z}}}
\newcommand{\fil}{\ensuremath{\textrm{fill}}}
\NewDocumentCommand{\chains}{m m m}{{C_{{#1}}(#2;#3)}} %space of chains. Call it with \zchains{degree}{space}
\NewDocumentCommand{\bounds}{m m m}{{B_{{#1}}(#2;#3)}} %space of boundaries. Call it with \zchains{degree}{space}
\NewDocumentCommand{\zchains}{m m}{{C_{{#1}}(#2;\Z)}} %space of integral chains. Call it with \zchains{degree}{space}
\NewDocumentCommand{\zcycles}{m m}{{Z_{{#1}}(#2;\Z)}} %space of integral chycles.
\NewDocumentCommand{\zbounds}{m m}{{B_{{#1}}(#2;\Z)}} %space of integral boundaries
\NewDocumentCommand{\zhomol} {m m}{{H_{{#1}}(#2;\Z)}} %homology
\NewDocumentCommand{\rhomol} {m m}{{H_{{#1}}(#2;\R)}} %homology
\NewDocumentCommand{\norm}{m O{} O{}}{{\left\lVert#1\right\rVert_{#2}^{#3}}} %norm: the second argument may be skipped and is the superscipt p of the norm. call it with \norm {argument}[superscript} or as \norm {argument}
\NewDocumentCommand{\fnorm}{m O{\Z}}{{\left\lVert#1\right\rVert_{\fil, #2}}} %norm: the second argument may be skipped and is the superscipt p of the norm. call it with \norm {argument}[superscript} or as \norm {argument}
\NewDocumentCommand{\mtorus}{m m}{{{#1}_{#2}}}
\NewDocumentCommand{\hprism}{O{n} O{k} m}{P^{#1}_{#2}\left(#3\right)}
\NewDocumentCommand{\hrect}{O{n} m}{R_{#1}\left(#2\right)}
\NewDocumentCommand{\po}{m O{}}{{F^{#1}_{#2}}}
\NewDocumentCommand{\T}{O{2}}{T^{#1}}
\renewcommand*{\parallel}{{\stretchrel*{\sslash}{\perp}}}
\title[On the integral simplicial volume of cyclic covers of mapping tori]{On the integral simplicial volume of cyclic covers of mapping tori}
\author{Federica Bertolotti}
\address{Faculty of Mathematics, KIT, Englerstr. 2, 76131 Karlsruhe, Germany}
\email[F.~Bertolotti]{federica.bertolotti@kit.edu}
\author{Ervin Had\v{z}iosmanovi\'{c}}
\address{Scuola Normale Superiore, Piazza dei Cavalieri 7, 56126 Pisa, Italy}
\email[E.~Had\v{z}iosmanovi\'{c}]{ervin.hadziosmanovic@sns.it}
\begin{document}

\begin{abstract}
In this paper, we investigate the asymptotic behavior of the integral simplicial volume of cyclic covers of manifolds that fiber over the circle with fiber given by an $n$-dimensional torus. By studying the integral filling volume---an invariant introduced by Frigerio and the first author---for the monodromy, we establish both lower and upper bounds for the limit of the integral simplicial volume of these covers, normalized by the degree of the covering. These bounds are expressed in terms of the action of the monodromy on the real homology of the fiber.

As applications, we establish a close connection between the topological entropy and the integral filling volume of self-homeomorphisms of \( n \)-dimensional tori, we find new examples for which the $\Delta$-complexity and the integral simplicial volume are not equivalent, and we prove the nonvanishing of the filling volume for Anosov self-diffeomorphisms of infranilmanifolds.
\end{abstract}

\maketitle

\section{Introduction}
%!TeX root=./FVAnosov.tex

\emph{Integral simplicial volume} is a homotopy invariant for oriented closed manifolds that measures their topological complexity in terms of integral singular chains. 
For an oriented closed $n$-dimensional manifold $M$, it is defined as the minimal weighted number of singular simplices appearing in an integral cycle representing the fundamental class for $M$:
\[\norm{M}[\Z]=\min\left\{\sum_{i\in I} |a_i|\ \middle|\ \sum_{i\in I} a_i\sigma_i \text{ integral fundamental cycle for } M\right\}.\]
This invariant is the integral counterpart of the \emph{real simplicial volume} (often simply called \emph{simplicial volume}), which was introduced by Gromov in his influential paper~\cite{gromov1982volume}. 
The real simplicial volume enjoys many properties and can be studied via a dual theory known as \emph{bounded cohomology}. 
In contrast, the integral simplicial volume satisfies fewer structural properties, and no analogous cohomological theory is currently available to assist in its study. 
This makes the integral simplicial volume a more challenging and elusive invariant: except for manifolds of dimension~$1$ and~$2$, where it is completely understood, its exact value is known only for a finite number of manifolds (up to homotopy equivalence) in each dimension~\cite{loh2018odd}.
Integral simplicial volume is often investigated via its stabilized version, the \emph{stable integral simplicial volume}, defined as 
\[\norm{M}[\Z][\infty]=\inf\left\{\frac{\norm N[\Z]}{k}\ \middle|\ N\to M \text{ cover of degree }k\geq1\right\}.\]
On the contrary of integral simplicial volume, stable integral simplicial volume is stable under finite covers, meaning that if $N\to M$ is a cover of degree $k\geq 1$, then $\norm N[\Z][\infty] = k \cdot \norm M[\Z][\infty]$.

For any orientation-preserving self-homeomorphism $f \colon M \to M$ of an oriented closed manifold $M$, we consider the associated mapping torus 
\[
\mtorus{M}{f} \coloneqq M \times [0,1] / \sim, \quad \text{where } (x,0) \sim (f(x),1).
\]
In this paper, we investigate the asymptotic behavior of \( \norm{\mtorus{M}{f^k}}[\Z] \) as \( k \to \infty \), where \( \mtorus M{f^k} \to \mtorus{M}{f} \) is a cyclic covering of degree \( k \). With a slight abuse of terminology, we refer to these coverings as \emph{the cyclic coverings} of $\mtorus Mf$. Since only mapping tori are considered here, this terminology should not lead to any ambiguity.

Let us denote by $\N$ the set of positive integers, and for $n \in \mathbb{N}$ let $\T[n] = \mathbb{R}^n / \mathbb{Z}^n$ be the $n$-dimensional torus.
The main result of this paper relates the growth of the integral simplicial volume of the cyclic covers of mapping tori associated to orientation-preserving self-homeomorphisms $f\colon\T[n]\to\T[n]$ to the action induced by~$f$ on~$\rhomol{1}{\T[n]}$.

\begin{mainteo}\label{mainteo1}
	Fix $n\in \N$. There exists a constant $K_n>0$ such that the following holds: if $f\colon \T[n]\to \T[n]$ is an orientation-preserving homeomorphism on the $n$-torus  and  $f_*\colon \zhomol{1}{\T[n]}\to \zhomol{1}{\T[n]}$ denotes the induced map in homology, then
	\[\frac 1K_n \log(\rho(f_*))\le \lim\limits_{k\to+\infty}\frac{\norm{\mtorus{(\T[n])}{f^k}}[\Z]}{k} \le K_n \log(\rho(f_*)),\]
	where $\rho(f_*)$ denotes the spectral radius of the linear map $f_*$.
\end{mainteo}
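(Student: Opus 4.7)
The plan is to reduce the computation to the integral filling volume $\fvz(f)$ of the monodromy, and then to sandwich $\fvz(f)$ between constants times $\log\rho(f_*)$. Since $\T[n]$ is a $K(\Z^n,1)$, any self-homeomorphism $f$ is homotopic to the linear automorphism of $\T[n]$ induced by the matrix $f_*\in\SL(n,\Z)$; because mapping tori of homotopic self-homeomorphisms are homotopy equivalent, and integral simplicial volume is a homotopy invariant, I may assume throughout that $f$ is already linear. The results on $\fvz$ developed earlier in the paper (following Frigerio and the first author) should yield constants $a_n,b_n>0$ such that
\[a_n\cdot\fvz(f)-b_n\le \lim_{k\to+\infty}\frac{\norm{\mtorus{(\T[n])}{f^k}}[\Z]}{k}\le b_n\cdot\fvz(f)+b_n,\]
reducing the theorem to showing that $\fvz(A)$ and $\log\rho(A)$ are comparable, up to additive and multiplicative constants depending only on $n$, for every $A\in\SL(n,\Z)$.

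For the upper bound I would factor $A=E_1\cdots E_N$ as a product of elementary matrices (transvections, permutations, sign changes) with $N\le C_n\log\rho(A)+C_n$. Such a factorization is standard: the continued-fraction algorithm handles $n=2$, and bounded generation of $\SL(n,\Z)$ by elementaries together with a word-length estimate in terms of $\log\|A\|$ handles $n\ge3$. Next, $\fvz$ is subadditive under composition: if $c_g$ fills $g_\#(z)-z$ and $c_h$ fills $h_\#(z)-z$, then $c_h+h_\#(c_g)$ fills $(h\circ g)_\#(z)-z$ using $|c_h|+|c_g|$ simplices. Since each elementary matrix admits an explicit filling of uniformly bounded size (its mapping torus being a simple nilmanifold-like piece), subadditivity yields $\fvz(A)\le C_n'\log\rho(A)+C_n'$, the desired linear upper bound.

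For the lower bound I would go through topological entropy. By Manning's theorem any self-map of $\T[n]$ satisfies $h_{\mathrm{top}}(f)\ge\log\rho(f_*)$, so it suffices to establish an inequality $h_{\mathrm{top}}(f)\le D_n\fvz(f)+D_n$. The heuristic is that an economical integral filling of $f_\#(z)-z$ assembles into an economical fundamental cycle for every mapping torus $\mtorus{(\T[n])}{f^k}$, and the number of simplices in such a cycle controls the exponential growth of orbit-separation numbers of $f$ through a Yomdin/Manning-type counting of simplex lifts to the universal cover $\R^n$.

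The hard part is this lower bound. Because $\pi_1(\mtorus{(\T[n])}{f})$ is polycyclic, hence amenable, the real simplicial volume of the mapping torus vanishes, and bounded cohomology is of no help. Turning a small integral filling into a lower bound on the entropy requires a genuinely geometric-dynamical argument that exploits the linear action of $f_*$ on $\R^n$ to convert the combinatorial complexity of the filling into displacement estimates on lifts. Making this conversion quantitative, especially in the borderline regime where $\rho(f_*)$ is close to $1$, is in my view the technical heart of the theorem; by comparison, the factorization/subadditivity argument underlying the upper bound is relatively standard.
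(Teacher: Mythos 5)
Your reduction to $\fvz(f)$ is correct in spirit, but the relationship is cleaner than you suggest: the paper's Theorem~\ref{thm-fv-sv} gives the exact equality $\fvz(f)=\lim_k\norm{\mtorus{(\T[n])}{f^k}}[\Z]/k$, not a bilipschitz comparison with additive error.

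For the upper bound your route (factor $A$ into bounded-displacement elementary matrices, invoke subadditivity of $c\mapsto\fnorm{c_*(z)-z}$, and bound the word length by $\log\|A\|$) is genuinely different from the paper's, which constructs explicit economical fillings by a sequence of geometric moves on \emph{parallelogram-like} cycles (splitting along a vector, halving-and-doubling, sliding) and only afterward feeds $\|A^j\|_\infty$ into Gelfand's formula. The serious gap in your version is $n=2$. The word metric on $\SL(2,\Z)$ is exponentially distorted in $\SL(2,\R)$, so the estimate ``word length of $A^j$ is $O(\log\|A^j\|)$'' is simply false there: for $A=T^k$ one has $\ell(A^j)\sim jk$ while $\log\|A^j\|\sim\log(jk)$, and even for hyperbolic $A=R^kL$ one gets stable translation length $\tau(A)\sim k$ but $\log\rho(A)\sim\log k$. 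Subadditivity can only ever give $\fvz(A)\le C\cdot\tau(A)$, which for such $A$ is exponentially larger than the required $C\log\rho(A)$. The whole point of the paper's halving-and-doubling trick (\cref{claim-double-and-halve}, \cref{sistemareS1}) is precisely to beat the linear-in-entries bound that naive subadditivity gives. For $n\ge3$ your approach would go through by Lubotzky--Mozes--Raghunathan undistortion, but $n=2$ is an unavoidable case of the theorem and is not covered.

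For the lower bound your proposed inequality $h_{\mathrm{top}}(f)\le D_n\fvz(f)+D_n$ is not established (you flag this yourself), and it is not the route the paper takes. The paper never compares $\fvz$ to entropy directly; instead it proves $\fvz(f)\ge c_n\sum_{|\lambda|>1}\log|\lambda|$ by a torsion argument: Sauer's theorem bounds $\log|\tors\,\zhomol{1}{\mtorus M{f^k}}|$ by a dimensional constant times the number of simplices in a fundamental cycle of $\mtorus M{f^k}$, while $\zhomol{1}{\mtorus M{f^k}}\cong\coker(f_*^k-\id)\oplus\Z$ and the torsion of this cokernel grows (along a subsequence) exactly like $\prod_{|\lambda|>1}|\lambda|^k$. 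The entropy comparison with the torus (\cref{mainteo2}) is then a corollary via Bowen's formula and Manning's inequality, not an ingredient. So the direction you want to prove ($h\lesssim\fvz$) is deduced from the theorem in the paper, not used to prove it. You also misjudge which half is ``the technical heart'': in the paper the lower bound is a short reduction to Sauer plus a Smith-normal-form computation, while the upper bound construction occupies essentially all of Section~\ref{upper2}.
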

See \cref{sec-matrices} for the definition of spectral radius.

To prove the upper bound of \cref{mainteo1} we exploit an invariant of orientation-preserving self-homeomorphisms of oriented closed manifolds introduced by Frigerio and the first author \cite{bertolotti2024length}. 
This invariant, called \textit{integral filling volume} and denoted by $\fvz$, was originally defined by means of the action of $f\colon M\to M$ on fundamental cycles for $M$ (see \cref{sec-maindef}). However, the two authors proved that 
\[\fvz(f)=\lim\limits_{k\to \infty}\frac{\norm{\mtorus M{f^k}}[\Z]}{k}.\]
In particular, to prove the upper bound of \cref{mainteo1} it is enough to furnish an upper bound on $\fvz(f)$, where $f\colon \T[n]\to \T[n]$ is an orientation-preserving self-homeomorphism of the $n$-torus. 
This is done via an explicit construction in \cref{upper2}.

For the lower bound we prove a more general result which can be also of interest for other manifolds (see, e.g., \cref{maincor2} below).
\begin{restatable}{mainteo}{mainteolower}\label{mainteo-lower}
    Let $n\in \N$. If $M$ is an oriented closed $n$-manifold and $f\colon M\to M$ is an orientation-preserving homeomorphism, then
    \[\fvz(f)\ge \frac 2{n(n+1)\log(n+1)}\sum_{|\lambda|>1}\log|\lambda|,\]
    where $\lambda$ varies among all the complex eigenvalues of the map $f_*\colon \rhomol{1}{M} \to \rhomol 1M$ induced by $f$ in homology.
\end{restatable}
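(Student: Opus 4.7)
The plan is to use the identity $\fvz(f)=\lim_{k\to\infty}\norm{\mtorus{M}{f^k}}[\Z]/k$ recalled in the excerpt and to reduce the problem to two essentially independent estimates: a dimension-dependent control of $\log|\zhomol{1}{-}_{\tors}|$ by integral simplicial volume, and a Mahler-measure-style computation of how fast this torsion grows along the cyclic tower $\{\mtorus{M}{f^k}\}_{k}$.

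For the first ingredient I would establish, for every oriented closed $d$-manifold $N$, an inequality of the form
\[
    \log|\zhomol{1}{N}_{\tors}| \;\le\; \frac{d(d-1)\log d}{2}\cdot\norm{N}[\Z],
\]
which specialised to $d=n+1=\dim\mtorus{M}{f^k}$ gives exactly the constant $\tfrac{2}{n(n+1)\log(n+1)}$ appearing in the statement. The idea is to start from a minimal integral fundamental cycle $c=\sum_i a_i \sigma_i$ of $N$ and build an auxiliary $\Delta$-complex $K$ from the singular simplices $\sigma_i$, endowed with a tautological realization map $K\to N$; a careful choice of $K$ makes this map a $\pi_1$-isomorphism, so that $\zhomol{1}{N}_{\tors}=\zhomol{1}{K}_{\tors}$. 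A Hadamard-type estimate on the relevant cellular boundary map---each of whose columns has a bounded number of $\pm 1$ entries while the total number of cells in the relevant dimension is controlled by $\norm{N}[\Z]$ via the simplex combinatorics---then yields the claimed bound. The factor $\log d$ can equivalently be produced by Poincar\'e duality, replacing $\zhomol{1}{N}_{\tors}$ by $\zhomol{d-2}{N}_{\tors}$ and applying Hadamard to $\partial_{d-1}$, whose columns have $d$ nonzero $\pm 1$ entries.

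For the second ingredient the Wang exact sequence of the circle fibration $\mtorus{M}{f^k}\to S^1$ furnishes
\[
    0 \longrightarrow \coker\!\bigl(f_*^{k}-\id\colon\zhomol{1}{M}\to\zhomol{1}{M}\bigr) \longrightarrow \zhomol{1}{\mtorus{M}{f^k}} \longrightarrow \Z \longrightarrow 0,
\]
whose torsion-free right-hand term forces $|\zhomol{1}{\mtorus{M}{f^k}}_{\tors}|$ to coincide with the torsion of the cokernel. After restricting to an $f_*$-invariant integral sublattice of $\zhomol{1}{M}$ avoiding the generalized eigenspaces associated to root-of-unity eigenvalues of $f_*$ (at the cost of a multiplicative constant independent of $k$), the order of the cokernel is bounded below by $|\det(f_*^k-\id)|/|\zhomol{1}{M}_{\tors}|$. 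Writing this determinant as $\prod_{\zeta^k=1}|p(\zeta)|$, where $p$ is the characteristic polynomial of $f_*$ on $\rhomol{1}{M}$, and invoking Jensen's formula for the Mahler measure of $p$ yields
\[
    \lim_{k\to\infty}\frac{1}{k}\log|\det(f_*^k-\id)|=\sum_{|\lambda|>1}\log|\lambda|.
\]

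Combining the two ingredients, dividing by $k$ and letting $k\to\infty$ produces the desired inequality for $\fvz(f)$. The main obstacle I anticipate is the first step: one must construct the auxiliary complex $K$ so as to prevent torsion from being spuriously created when passing to the quotient $\zhomol{1}{N}$, while keeping the combinatorial cell counts tight enough to produce the asserted dimensional constant. Once this is in place, the remainder is a routine application of the Wang sequence together with classical identities for Mahler measures.
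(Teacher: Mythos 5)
Your overall strategy coincides with the paper's: bound $\log|\tors\zhomol{1}{N}|$ above by a dimensional constant times $\norm{N}[\Z]$, use the Wang/Mayer--Vietoris identification of $\zhomol{1}{\mtorus{M}{f^k}}$ with $\coker(f_*^k-\id)\oplus\Z$, and then compute the exponential growth rate of the torsion of the cokernel via the Mahler measure of the characteristic polynomial. Two points deserve attention. First, the ``first ingredient'' you flag as the main obstacle is in fact a theorem of Sauer (cited in the paper as \cite[Theorem~3.2]{sauer2016volume}); you do not need to re-derive it by building an auxiliary $\Delta$-complex with a $\pi_1$-isomorphism to $N$, and indeed that route is substantially harder than you suggest (getting the realization map to be a $\pi_1$-isomorphism while keeping Hadamard-tight cell counts is exactly the delicate part Sauer's proof handles). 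Once you invoke Sauer, the constant $\tfrac{2}{n(n+1)\log(n+1)}$ drops out as you computed. Second, your torsion bookkeeping for the cokernel is too loose to conclude as stated: when $\zhomol{1}{M}$ itself has torsion, or when $1$ is an eigenvalue of $f_*^k$, the phrase ``restrict to an $f_*$-invariant integral sublattice avoiding the generalized eigenspaces of roots of unity'' does not directly yield an integral decomposition, and the naive bound $|\tors(\coker)|\ge|\det(f_*^k-\id)|/|\zhomol{1}{M}_{\tors}|$ degenerates when the determinant vanishes. The paper resolves this with two concrete devices: an algebraic lemma (their Lemma~\ref{torsion}) comparing $|\tors(\coker(\overline{h}))|$ with $|\tors(\coker(h))|$ when passing to $G/\tors(G)$, and a Smith-normal-form argument applied to the operator $C_k$ on $\im(\id-A)$ (following Bergeron--Venkatesh), which produces the Mahler-measure limit along a suitable subsequence $(k_h)$ rather than for all $k$. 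This last point is not cosmetic: if $f_*$ has eigenvalues that are roots of unity, $\det(f_*^k-\id)$ vanishes infinitely often and your ``let $k\to\infty$'' must be replaced by a $\limsup$ taken along the Bergeron--Venkatesh subsequence, which is exactly how the paper's Lemma~\ref{logtor} is phrased. With Sauer's theorem cited and the torsion/subsequence issues treated as above, the rest of your outline (Wang sequence, $\det(A^k-I)=\prod_{\zeta^k=1}|p(\zeta)|$, Jensen's formula) is correct and is equivalent to the paper's use of \cite{bergeron2013asymptotic}.
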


\cref{mainteo1} completes the picture of what was already known for integral filling volume of $2$-dimensional tori and extends it to all dimensions. Up to homotopy, orientation-preserving self-homeomorphisms of $\T$ are either periodic, Dehn twists or Anosov maps, corresponding in homology to matrices in $\SL(2,\Z)$ respectively with trace less than $2$, equal to $2$ and bigger than $2$. From the definition, it follows directly that periodic maps have vanishing integral filling volume. For Dehn twists, the integral filling volume was shown to vanish by Frigerio and the first author \cite{bertolotti2024integral}. In the same paper, they showed that the integral filling volume for Anosov maps is positive and bounded from below by the logarithm of the spectral radius (up to a multiplicative constant). Thus, \cref{mainteo1} extends this lower bound to all maps and dimensions, giving a complete description of $\fvz$ on orientation-preserving self-homomorphisms of $\T[n]$ in terms of readily computable algebraic quantities.

One may ask whether it is possible to generalize \cref{mainteo1} to other manifolds, but this is not the case in general: let us consider for example a hyperbolic surface $S_g$ with genus $g\ge2$ and let $f\colon S_g\to S_g$ be a pseudo-Anosov homeomorphism. Then, the integral filling volume $\fvz(f)$ is positive \cite[Theorem 4]{bertolotti2024integral}, while there are many pseudo-Anosov homeomorphisms acting trivially in homology \cite[Corollary 14.3]{farb2011primer}, thus having $\rho(f_*)=0$.

\subsection*{Topological entropy}
Another invariant defined on self-homeomorphisms $f\colon M\to M$ of compact manifolds $M$ is the \textit{topological entropy}, denoted by $h(f)$. This invariant was introduced by Adler, Konheim and MacAndrew \cite{adler1965topological}, and it measures the complexity of the actions of maps on coverings of manifolds. As the topological entropy of a self-homeomorphism $f\colon M\to M$ is not invariant under homotopy, it makes sense to consider the \textit{minimal topological entropy}, defined on the homotopy class $[f]$ of the map $f$ by
\[h([f])\coloneqq \inf\{h(g) \mid [g]=[f]\}.\]

It is possible to state \cref{mainteo1} in terms of minimal topological entropy of the monodromy.

\begin{maincor}\label{mainteo2}
	Fix $n\in \N$. For any orientation-preserving self-homeomorphism $f\colon \T[n]\to \T[n]$ on the $n$-torus, it holds that
	\[\frac 1{nK_n} h([f])\le \fvz(f) \le K_n h([f]),\]
	where $K_n$ is the constant of \cref{mainteo1}. 
\end{maincor}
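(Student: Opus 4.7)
The plan is to reduce \cref{mainteo2} to \cref{mainteo1} via the classical identification of the minimal topological entropy of a self-homeomorphism of $\T[n]$ as a purely homological quantity.

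First, I would recall the formula
\[h([f])=\sum_{|\lambda|>1}\log|\lambda|,\]
where $\lambda$ varies among the complex eigenvalues of $f_*\colon\rhomol 1{\T[n]}\to\rhomol 1{\T[n]}$. The inequality $\ge$ is a special case of Misiurewicz--Przytycki's theorem: every continuous self-map $g$ of $\T[n]$ satisfies $h(g)\ge\sum_{|\lambda|>1}\log|\lambda|$, and the eigenvalues of $g_*$ on $\rhomol 1{\T[n]}$ depend only on the homotopy class of $g$. The reverse inequality is obtained by evaluating $h$ on the affine representative induced by the integer matrix $f_*\in\SL(n,\Z)$, for which Bowen's formula for the entropy of toral endomorphisms yields entropy exactly $\sum_{|\lambda|>1}\log|\lambda|$.

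Second, I would use the elementary spectral bookkeeping
\[\log\rho(f_*)\;\le\;\sum_{|\lambda|>1}\log|\lambda|\;\le\;n\log\rho(f_*).\]
The left inequality holds because, when $\rho(f_*)>1$, the maximal eigenvalue itself contributes a term to the sum; the right one because there are at most $n$ eigenvalues (counted with multiplicity), each of modulus at most $\rho(f_*)$. In the edge case $\rho(f_*)\le 1$, the condition $\det f_*=1$ (since $f$ is orientation-preserving) forces $|\lambda|=1$ for every eigenvalue, so both sides of the display vanish together with $h([f])$ and with the upper bound from \cref{mainteo1}, yielding $\fvz(f)=0=h([f])$.

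Plugging everything into \cref{mainteo1} will then give
\[\fvz(f)\;\le\;K_n\log\rho(f_*)\;\le\;K_n\,h([f])\]
and symmetrically
\[\fvz(f)\;\ge\;\frac{1}{K_n}\log\rho(f_*)\;\ge\;\frac{1}{nK_n}\,h([f]),\]
which is the claimed two-sided bound. The only substantive input beyond \cref{mainteo1} is the classical entropy formula for self-maps of $\T[n]$; once that is invoked, the remaining argument is a short piece of spectral bookkeeping, so I do not expect any real obstacle.
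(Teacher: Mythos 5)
Your proposal is correct and follows essentially the same route as the paper: express the minimal topological entropy of a torus homeomorphism in homological terms, then combine with \cref{mainteo1} via the elementary spectral bookkeeping $\log\rho(f_*)\le\sum_{|\lambda|>1}\log|\lambda|\le n\log\rho(f_*)$. The one divergence is in the classical input you cite: the paper uses Bowen's formula for the entropy of the linear representative together with Manning's inequality $h(g)\ge\log\rho(g_*)$ (which is all that is strictly needed to squeeze $h([f])$ between $\log\rho(f_*)$ and $\sum_{|\lambda|>1}\log|\lambda|$), whereas you invoke the Misiurewicz--Przytycki entropy theorem for tori, $h(g)\ge\sum_{|\lambda|>1}\log|\lambda|$, which yields the cleaner identity $h([f])=\sum_{|\lambda|>1}\log|\lambda|$ before applying \cref{mainteo1}. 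Both lead to the same final two-sided bound; you use a somewhat deeper classical theorem than necessary, while the paper's appeal to Manning is more economical. Your handling of the edge case $\rho(f_*)\le 1$ (forcing all eigenvalues onto the unit circle, since $\det f_*=1$) is a useful remark that the paper leaves implicit in its \cref{basic_ineq}.
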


\begin{proof}
It is a classical result that for a linear homeomorphism $f\colon T^n\to T^n$ one has 
\[h(f)=\sum _{|\lambda|>1}\log|\lambda|,\]
where $\lambda$ ranges among all complex eigenvalues of the map $f_*\colon \zhomol{1}{\T[n]}\to\zhomol{1}{\T[n]}$ induced in homology by $f$, e.g, \cite[Corollary 16]{bowen1971entropy}. The quantities $\sum _{|\lambda|>1}\log|\lambda|$ and $\rho(f_*)$ are related by the following basic inequalities

\[\frac 1n\sum_{|\lambda|>1}\log|\lambda|\leq\log(\rho(f))\leq \sum_{|\lambda|>1}\log|\lambda|.\]
In particular, by \cref{mainteo1} it holds that 
\[\frac 1{nK_n} h(f)\le \fvz(f) \le K_n h(f).\]

Moreover, by a result of Manning, one has $h(g)\ge \log(\rho(g_*))$ for any homeomorphism $g\colon T^n\to T^n$ \cite{manning2006topological}. The statement then follows by combining these results and the fact that any orientation-preserving self-homeomorphism of the torus is homotopic to a linear map (see \cref{sec-matrices}).
\end{proof}

To the best of the authors’ knowledge, there are no results relating the topological entropy to the integral simplicial volume; in this sense, \cref{mainteo2} provides the first result in this direction.
Nevertheless, the simplicial volume is often studied in connection with the minimal volume entropy, an invariant of manifolds closely related to the topological entropy. In what follows, we aim to provide the reader with a brief overview of the mutual relationships among these invariants.

Let $(M,g)$ be a closed Riemannian manifold. One can consider the geodesic flow $\phi_t$ defined on the unit sphere bundle $SM$ and define the \emph{topological entropy} $h(M,g)$ of $(M,g)$ by taking the topological entropy of the flow for $t=1$ \cite[Section 3]{manning1979entropy}, that is
\[h(M,g)\coloneqq h(\phi_1).\]
To obtain an invariant of the manifold, one usually considers the \emph{topological entropy of} $M$ \cite[Sections 2.4]{kotschick2011entropies}, denoted by $h(M)$ and defined by 
\[h(M)\coloneqq \inf \{h(M,g) \mid g\text{ Riemannian metric on }M \text{ with}\vol(M,g)=1\}.\]

Let $(\widetilde{M},\widetilde{g})$ denotes the (Riemannian) universal cover of a Riemannian manifold $(M,g)$. The \emph{volume entropy} $\lambda(M,g)$ of $(M,g)$ is defined as the exponential growth rate of the volume of balls in $\widetilde{M}$ \cite[Sections 2.3]{kotschick2011entropies}, i.e.,
\[\lambda(M,g)\coloneqq \lim\limits_{R\to +\infty}\frac{\log(\vol_{\widetilde{g}}(B(p,R)))}{R},\]
where $B(p,R)\subseteq \widetilde{M}$ is the ball of radius $R$ and center $p\in\widetilde{M}$. This limit always exists and does not depend on $p$. As above, one defines the \emph{minimal volume entropy} $\lambda(M)$ of $M$ by taking the infimum of $\lambda(M,g)$ normalized by volume, that is 
\[\lambda(M)\coloneqq \inf \{\lambda(M,g) \mid g\text{ Riemannian metric on }M \text{ with}\vol(M,g)=1\}.\]

Manning proved that $\lambda(M,g)\le h(M,g)$ for any closed oriented Riemannian $n$-manifold $(M,g)$ and thus $\lambda(M)\le h(M)$ \cite[Theorem 1]{manning1979entropy}, while Gromov showed that $C(n)\norm{M}[]\le \lambda(M)^n$, where $\norm{M}[]$ is the \emph{real simplicial volume} and $C(n)$ is a constant depending only on the dimension \cite[Section 2.5]{gromov1982volume}. Thus, the following inequalities hold
\[C(n)\norm{M}[]\le \lambda(M)^n\le h(M)^n.\]

Since then, the relationship between these invariants has attracted considerable interest, e.g., \cite{gallot1991volume,babenko2021entropy}, also in the case of (4-dimensional) mapping tori \cite{bargagnati2025minimalvolumeentropymapping}.
Notice that for mapping tori with torus fiber and monodromy given by a self-homeomorphism of $T^n$ induced by a linear map the minimal topological entropy vanishes, as these manifolds admit an $\mathcal{F}$-structure \cite[Examples 19.8]{fukaya1990hausdorff}.

\subsection*{Delta-complexity of mapping tori}
Let $M$ be an oriented closed connected $n$-manifold. 
A \emph{triangulation} of $M$ is a realization of $M$ as a $\Delta$-complex, i.e., as a union of $n$-dimensional simplices with some facets identified in pairs along affine homeomorphisms. The $\Delta$-\emph{complexity} $\Delta(M)$ is defined as the minimum number of $n$-simplices that appear in a triangulation of $M$. 

The $\Delta$-complexity of mapping tori has been investigated as a standalone invariant by Lackenby and Purcell \cite{lackenby2024triangulation,lackenby2024triangulation2} and by Frigerio and the first author in relation to integral simplicial volume \cite{bertolotti2024integral}.
In particular, Frigerio and the first author showed that integral simplicial volume and $\Delta$-complexity are not bi-Lipschitz equivalent invariants on the set of cyclic covers of a mapping torus with a $2$-torus as fiber and a Dehn twist as monodromy \cite{bertolotti2024integral}. A consequence of \cref{mainteo1} is that the same holds for mapping tori with an Anosov map as monodromy.
\begin{maincor}
	There exists a sequence $\{M_i\}$ of $3$-dimensional mapping tori $M_i$ with a $2$-torus as fiber and an Anosov  map as monodromy such that 
	\[\lim\limits_{i\to \infty}\frac{\Delta(M_i)}{\norm{M_i}[\Z]}=+\infty.\]
\end{maincor}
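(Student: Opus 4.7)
The idea is to exhibit a sequence of Anosov linear homeomorphisms $f_i \colon \T[2] \to \T[2]$ whose mapping tori $M_i \coloneqq \mtorus{(\T[2])}{f_i}$ have integral simplicial volume growing only logarithmically in a parameter $i$, while their $\Delta$-complexity grows at least linearly. As candidates one may take Anosov matrices $A_i \in \SL(2,\Z)$ with trace $i+2$ (for instance $A_i = \left(\begin{smallmatrix} i+1 & 1 \\ i & 1 \end{smallmatrix}\right)$), so that the spectral radius satisfies $\rho((A_i)_*) \sim i$, and therefore $\log \rho((A_i)_*) \sim \log i$. Note that cyclic covers of a fixed Anosov mapping torus cannot be used here: there both $\norm{M_i}[\Z]$ and $\Delta(M_i)$ grow linearly in the degree (the first by \cref{mainteo1} and the positivity of $\fvz$ on Anosov monodromies, the second via the pullback of a triangulation), so the ratio stays bounded.

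For the \emph{upper bound} on $\norm{M_i}[\Z]$, we revisit the explicit construction of fundamental cycles underlying \cref{upper2}, which yields the upper bound part of \cref{mainteo1}. That construction produces, for every $k \ge 1$, an integral fundamental cycle of $\mtorus{(\T[2])}{f_i^k}$ of size at most $K_2\, k \log \rho((f_i)_*) + C$, with $C$ depending only on the dimension. Evaluating at $k = 1$ gives $\norm{M_i}[\Z] \le K_2 \log \rho((f_i)_*) + C = O(\log i)$.

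For the \emph{lower bound} on $\Delta(M_i)$, we use the standard inequality $\Delta(N) \ge c(N)$ for closed $3$-manifolds $N$, where $c(N)$ is the Matveev complexity: the dual $2$-skeleton of any triangulation is an almost-simple spine with one true vertex per tetrahedron. Since the $M_i$ are Sol $3$-manifolds, classical results on the Matveev complexity of torus bundles over the circle with Anosov monodromy imply $c(M_i) \ge c' \cdot i$ for some $c' > 0$; concretely, one can choose the family $\{A_i\}$ so that the partial quotients of the periodic continued-fraction expansion of the dominant eigenvalue are of size $i$, forcing the corresponding linear growth of the complexity. Combining the two estimates, $\Delta(M_i)/\norm{M_i}[\Z] \ge c' i/(K_2 \log i + C) \to +\infty$.

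The main obstacle is the upper bound: \cref{mainteo1} is phrased asymptotically along cyclic covers, whereas here we need the bound for a single mapping torus. This should follow from a direct inspection of the cycle construction behind \cref{upper2}, which produces, for every finite $k \ge 1$, an explicit fundamental cycle of size $K_2\, k \log \rho((f_i)_*) + C$; specializing to $k = 1$ gives the required $O(\log i)$ bound. The secondary issue is to justify the linear lower bound on the Matveev complexity of $M_i$ for the chosen family, which is achieved by a careful choice of the matrices $A_i$ (e.g. ensuring that the periodic continued-fraction expansion of the dominant eigenvalue of $A_i$ has partial quotients of size $\Omega(i)$).
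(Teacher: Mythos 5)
Your proposal takes a genuinely different route from the paper, but contains a gap in the upper bound that would need real work to close. The paper does not try to prove $\norm{\mtorus{(\T[2])}{f_i}}[\Z]=O(\log i)$ for a single mapping torus; instead it sets $M_i = \mtorus{(\T[2])}{f_i^{k_i}}$ for a suitably large power $k_i$ and uses \cref{mainteo1} \emph{asymptotically along cyclic covers}, which is exactly the regime where the theorem applies. For the lower bound the paper also avoids Matveev complexity: it uses Lackenby--Purcell's estimate $\kappa\,\ell(\overline A_i^{\,j})\le\Delta((\T)_{f_i^j})$ in terms of the cyclically reduced word length in $\PSL(2,\Z)\cong\Z/2*\Z/3$, which is calibrated against the same $k_i$ and $i$.

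The problem with your approach is the step ``evaluating at $k=1$.'' \cref{mainteo1} and \cref{higher-dim-upper} control the \emph{filling volume} $\fvz(f)$, which is a limit, and the explicit estimate behind it (\cref{metteresuS1}) bounds the \emph{filling norm} $\fnorm{f_*^k(c_n)-c_n}\le K_n\log_2\norm{A^k}[\infty]+K_n$. Two things are missing before you get $\norm{\mtorus{(\T[2])}{f_i}}[\Z]=O(\log i)$. First, converting a small filling of $f_*(c_2)-c_2$ into a small fundamental cycle for the mapping torus is the content of the (different) construction behind \cref{thm-fv-sv}, proved in the companion paper; one would need to check that this construction adds only an additive constant (independent of $f$) so that the $k=1$ bound is quantitative, which is not stated here. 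Second, your claimed bound ``$K_2\,k\log\rho((f_i)_*)+C$'' is not what the construction yields: it gives a bound in terms of $\log_2\norm{A^k}[\infty]$, and the passage to $k\log\rho$ uses Gelfand's formula, which is an asymptotic (in $k$) statement. For your specific matrices $A_i$ (where $\norm{A_i}[\infty]\approx\rho(A_i)\approx i$) the two quantities happen to agree at $k=1$, so the conclusion is plausible, but the argument as written conflates them. Your observation that cyclic covers of a \emph{fixed} Anosov mapping torus cannot work is correct, but this slightly misses what the paper actually does: it uses cyclic covers of a \emph{varying} family of Anosov mapping tori, letting $k_i\to\infty$ along with $i$, which is precisely how one stays inside the asymptotic regime of \cref{mainteo1} without needing an unproved $k=1$ bound. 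The Matveev-complexity lower bound you invoke is standard for Sol torus bundles and would work, but it also lies outside the toolbox this paper sets up; the Lackenby--Purcell estimate does the same job and is already quoted in the paper.
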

\begin{proof}
	For any $i\in\N$, we consider the matrix $A_i\in\SL(2,\Z)$ given by
	\[A_i=
	\begin{pmatrix}
		i+1 & i\\
		1 & 1\\
	\end{pmatrix},
	\]
	and denote by $f_i\colon \T\to \T$ the map induced on the $2$-dimensional torus $\T=\R^2/\Z^2$ by $A_i$.

	Recall that the group $\PSL(2,\Z)=\SL(2,\Z)/\{\pm\id\}$ is isomorphic to the free product $\Z/2*\Z/3$, where the two factors are generated by $\overline S$ and $\overline U$ represented by the matrices \cite[Theorem 1.4]{lackenby2024triangulation}
	\[
		 S = \begin{pmatrix} 0 & -1 \\ 1 & 0 \end{pmatrix}
		\qquad
		U = \begin{pmatrix} 0 & -1 \\ 1 & -1 \end{pmatrix}.
	\]

	We denote by $\overline A_i$ the element in $ \PSL(2,\Z)$ represented by the matrix $A_i$. As $\overline A_i=(\overline U^{-1}\overline S)^{i}\overline U\overline S$ is a cyclically reduced alternating product in $\overline S,\overline U$,  it holds that $\ell(\overline A_i^j)=j(2i+2)$, where $\ell(\overline A_i)$ denotes the minimal length of a cyclically reduced word in the generators $\overline S,\overline U$.
	Then, by the work of Lackenby and Purcell \cite[Theorem 1.4]{lackenby2024triangulation}, it holds that
	\[\kappa j(2i+2)\le \Delta \left((\T)_{f_i^j}\right)\le j(i+1)+6,\]
	where $\kappa$ is a universal constant.

	On the other hand, the spectral radius $\rho(A_i)$ of the matrix $A_i$ satisfies the inequalities
	\[\log{i}<\rho(A_i)=\frac{1}{2}\left(i+2+\sqrt{i^2+4i}\right) <\log{(i+2)},\]
	Therefore, by \cref{mainteo1}, for any $i$ there exists a $k_i\in \N$ such that 
	\[\frac{k_i}{K_2}\log{(i)}\le\norm{\mtorus{(\T[2])}{f_i^{k_i}}}[\Z]\le k_iK_2\log{(i+2)}.\]
	
	The result then follows by setting $M_i\coloneqq \mtorus{(\T[2])}{f_i^{k_i}}$.
\end{proof}

\subsection*{Anosov diffeomorphisms on (infra)nilmanifolds}
Let $M$ be a compact Riemannian $n$-manifold and $f\colon M\to M$ be a diffeomorphism. The tangent bundle $TM$ comes equipped with a Riemannian metric and the differential of $f$ induces a bundle map $Df\colon TM\to TM$.
\begin{maindef}[{\cite[Section 3]{smale1967differentiable}}]
	We say that $f$ is \emph{Anosov} if there exist $Df$-invariant continuous subbundles $E^s, E^u\subseteq TM$,  constants $c>0$ and $\lambda\in (0,1)$ such that $TM=E^u\oplus E^s$ and for any $k\in \N$ it holds that
	\begin{align*}
		\norm{ Df^k(v) } &\leq c \lambda^k \| v \|, 	&\text{for any } v \in E^s, \\
		\norm{ Df^k(v) } &\geq c \lambda^{-k} \| v \|,  &\text{for any } v \in E^u.
	\end{align*}
	The subbundles $E^s$ and $E^u$ are called \emph{stable} and \emph{unstable} bundles of $f$ respectively.
\end{maindef}
The property of being Anosov does not depend on the choice of the Riemannian metric on $M$.

An automorphism $f\colon\T[n]\to \T[n]$ induced by a linear map $\widetilde f\colon \R^n\to\R^n$ with no eigenvalues on the unit circle $S^1\subseteq \mathbb{C}$ is an Anosov diffeomorphisms: indeed, the spaces $E^s, E^u$ are given by  the generalized eigenspaces corresponding to eigenvalues $\lambda$ of $\widetilde{f}$ with $|\lambda|>1$ and with $|\lambda|<1$, respectively.

A more general class of manifolds admitting Anosov diffeomorphisms are \emph{nilmanifolds}. 
\begin{maindef}
	Let $G$ be a connected, simply connected Lie group. We say that $G
	$ is \emph{nilpotent} if its Lie algebra $\mathfrak{g}$ is, that is if the lower central series
	\[\mathfrak{g}\ge[\mathfrak{g},\mathfrak{g}]\ge[\mathfrak{g},[\mathfrak{g},\mathfrak{g}]]\ge[\mathfrak{g},[\mathfrak{g},[\mathfrak{g},\mathfrak{g}]]]\ge\cdots\]
	eventually gets to the zero Lie algebra.
	A \emph{nilmanifold} is a manifold obtained as a quotient $G/\Gamma$ where $G$ is a connected, simply connected, nilpotent Lie group and $\Gamma\subseteq G$ is a discrete, finitely generated, cocompact lattice.
\end{maindef}
Connected, simply connected Lie groups admitting such a lattice are completely classified, and every finitely generated, nilpotent, torsion-free group is isomorphic to one of these lattices~\cite[Theorem~3.7]{smale1967differentiable}. 
On this class of manifolds, Anosov diffeomorphisms can be constructed algebraically, e.g., \cite[Section I.3]{smale1967differentiable}.

An even more general class of manifolds is the one of \emph{infranilmanifolds} \cite{dekimpe2012infra}. It turns out that such manifolds are finitely covered by nilmanifolds \cite[Section 2]{dekimpe2012infra}. As in the case of nilmanifolds, it is possible to construct Anosov diffeomorphisms on infranilmanifolds algebraically \cite[Sections 3, 5]{dekimpe2012infra}.
Let $M=G/\Gamma$ be a nilmanifold. Being any simply connected, nilpotent Lie group diffeomorphic to Euclidean space (via the exponential map), $M$ is aspherical and $\pi_1(M)\cong \Gamma$ is a finitely generated nilpotent group. If $M$ is an infranilmanifold it is also aspherical and $\pi_1(M)$ contains a nilpotent subgroup of finite index.

Let $f\colon M\to M$ be an orientation-preserving Anosov diffeomorphism on a infranilmanifold. By a result of Hirsch (which covers also the more general case of manifolds with virtually polycylic fundamental group) the map induced on homology $f_*\colon \rhomol{1}{M}\to\rhomol{1}{M}$ does not admit eigenvalues that are roots of unity \cite[Theorem 4]{hirsch1971anosov}. This implies that there exists at least one eigenvalue $\lambda$ with $|\lambda|>1$:
indeed, if it were not the case, all eigenvalues would be on the unit circle $S^1\subseteq \mathbb{C}$. For any eigenvalue $\lambda$, its Galois conjugates would still be eigenvalues, and thus be on the unit circle, and by Kronecker Theorem $\lambda$ would be a root of unity, e.g., \cite[Section 34, Lemma (a)]{hecke2013lectures}. 
It follows that $\rho(f_*)>1$. The following statement is then an application of \cref{mainteo2}.

\begin{maincor}\label{maincor2}
	If $M$ is an infranilmanifold and $f\colon M\to M$ an orientation-preserving Anosov diffeomorphism, then $\fvz(f)>0$.
\end{maincor}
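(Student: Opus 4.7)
The strategy is to reduce the claim to \cref{mainteo-lower}, which yields
\[\fvz(f)\ \ge\ \frac{2}{n(n+1)\log(n+1)}\sum_{|\lambda|>1}\log|\lambda|,\]
where the sum runs over complex eigenvalues of $f_*\colon \rhomol{1}{M}\to\rhomol{1}{M}$ and $n=\dim M$. It therefore suffices to exhibit at least one eigenvalue $\lambda$ of $f_*$ with $|\lambda|>1$: the right-hand side is then strictly positive, and so is $\fvz(f)$.

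First I would verify that Hirsch's theorem is available. An infranilmanifold has a finite-index nilpotent subgroup inside $\pi_1(M)$, so $\pi_1(M)$ is virtually polycyclic, and \cite[Theorem~4]{hirsch1971anosov} guarantees that no eigenvalue of $f_*$ is a root of unity. Next I would argue by contradiction that $\rho(f_*)>1$. Since $f$ is an orientation-preserving diffeomorphism, $f_*$ acts as an invertible integer matrix on $\zhomol{1}{M}/\tors$, so its characteristic polynomial is monic with integer coefficients and nonzero constant term; hence every eigenvalue and its inverse is an algebraic integer. If all eigenvalues had modulus $\le 1$, invertibility would force each to have modulus exactly $1$. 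Every Galois conjugate of such an eigenvalue is itself a root of the same integer polynomial, hence again an eigenvalue of $f_*$, and is likewise constrained to lie on the unit circle. Kronecker's theorem then forces each of these algebraic integers to be a root of unity, contradicting Hirsch's conclusion.

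The main (and essentially only) obstacle is this number-theoretic step, in which Hirsch's constraint on eigenvalues must be combined with the Galois-theoretic characterization of algebraic integers on the unit circle. Once $\rho(f_*)>1$ is secured, \cref{mainteo-lower} supplies the desired positive lower bound on $\fvz(f)$ and the proof is complete.
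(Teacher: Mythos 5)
Your proof is correct and follows essentially the same route as the paper: invoke Hirsch's theorem to rule out roots of unity among the eigenvalues of $f_*$, combine the determinant-$\pm1$ constraint with the Galois conjugate/Kronecker argument to force $\rho(f_*)>1$, and conclude from the lower bound on $\fvz(f)$. One small remark: the paper's text says the corollary is an application of \cref{mainteo2}, which is stated only for tori, whereas \cref{mainteo-lower} is the result that applies to general manifolds; your citation of \cref{mainteo-lower} is in fact the appropriate one here.
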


Finitely generated nilpotent groups are residually finite and amenable. This means that fundamental groups of infranilmanifolds are amenable and residually finite. Thus, if $M$ is an infranilmanifold and $f\colon M\to M$ an Anosov diffeomorphism, the mapping torus $\mtorus{M}{f}$ is an aspherical manifold with $\pi_1(\mtorus{M}{f})\cong \pi_1(M)\rtimes_{f_*}\Z$, which is also amenable and residually finite. By a result of Frigerio, L\"{o}h, Pagliantini and Sauer, the stable integral simplicial volume $\norm{ \mtorus{M}{f}}[\Z][\infty]$  vanishes \cite{frigerio2016integral}. This gives a family of manifolds $M$ and maps $f\colon M\to M$ for which the stable integral simplicial volume $\norm{\mtorus Mf}[\Z][\infty]$ vanishes while $\fvz(f)$ does not, generalizing the examples already found by Frigerio and the first author \cite[Theorem 1.12]{bertolotti2024length}.

\subsection*{Open problems and further questions}

As previously mentioned, many maps induce the trivial map in homology while still exhibiting a positive integral filling volume. This suggests that the spectral radius considered in \cref{mainteo1} may be insufficient to fully capture this phenomenon, and naturally leads to the question of whether there exists an invariant more closely aligned with the nonvanishing of the integral filling volume.

For $2$-tori, the minimal topological entropy of homeomorphisms can serve as a meaningful invariant in this context: as shown in \cref{maincor2}, for any Anosov map $f\colon \T \to \T$, the integral filling volume $\fvz(f)$ can be bounded in terms of the minimal topological entropy $h(f)$. In this case, $h(f) = \log(\rho(f_*)) = \log(\lambda(f))$, where $\rho(f_*)$ denotes the spectral radius of the induced map $f_* \colon \rhomol{1}{\T} \to \rhomol{1}{\T}$, and $\lambda(f)$ is the stretching factor of $f$ \cite[Chapter 11]{farb2011primer}.

For pseudo-Anosov homeomorphisms \( f \colon S_g \to S_g \) on surfaces of genus \( g \geq 2 \), the integral filling volume \( \fvz(f) \) is always positive, while \( \log(\rho(f_*)) \) may vanish. The topological entropy, however, is always positive, and is given by \( h(f) = \log(\lambda(f)) \), where \( \lambda(f) \) again denotes the stretching factor \cite[Section IV]{fahti1979some}.

Moreover, in the proof of the upper bound in \cref{mainteo1}, we use the Gelfand formula for the spectral radius (see \cref{gelfand}), which implies that for any matrix \( A \in \SL(n, \Z) \) and any vector \( v \in \R^n \), the norm \( \|A^k v\| \) grows like \( \rho(A)^k \). A similar phenomenon occurs on hyperbolic surfaces: under iteration by a pseudo-Anosov homeomorphism, the lengths of simple closed curves grow like powers of the stretching factor \cite[Theorem 14.23]{farb2011primer}.

These analogies suggest that an upper bound for \( \fvz(f) \) in terms of a function of the stretching factor \( \lambda(f) \) might also exist. Similarly to what Frigerio and the first author did for the \emph{real} filling volume \cite[Question 1.11]{bertolotti2024length}, we ask the following question.

\begin{mainquest}
Let \( g > 1 \) be a positive integer. Is there a constant \( C_g \) such that for any closed oriented surface \( S_g \) of genus \( g \) and any pseudo-Anosov homeomorphism \( f \colon S_g \to S_g \), it holds that
\[
\frac{1}{C_g} \log (\lambda(f)) \leq \fvz(f) \leq C_g \log (\lambda(f)),
\]
where \( \lambda(f) \) denotes the stretching factor of \( f \)?
\end{mainquest}

At present, the only known lower bound for \( \fvz(f) \) comes from the hyperbolic volume of the associated mapping torus. However, no direct relation with \( \log(\lambda(f)) \) is known, since there exist sequences of pseudo-Anosov maps \( f_n \colon S_g \to S_g \) such that \( h(f_n) \to \infty \), while the volumes of the corresponding mapping tori remain bounded \cite{long1986hyperbolic}.

\subsection*{Plan of the paper}
In \cref{prel}, we recall the main definitions and notational conventions used throughout the paper. The expert reader may safely skip this section, except for \cref{sec-straight} and \cref{sec-prism-operator}, where some new notation is introduced. In \cref{lower_highdim}, we establish the lower bound in \cref{mainteo1}, while the upper bound is proved in \cref{upper2}.

\subsection*{Acknowledgement}
We would like to thank Roberto Frigerio for suggesting us this problem and for useful discussions about it. Moreover, we would like to thank Mikl\'{o}s Ab\'{e}rt for pointing out useful references concerning the lower bound of \cref{mainteo1}. Finally, this work has been written within the activities of GNSAGA group of INdAM (National Institute of Higher Mathematics).

\section{Preliminaries}\label{prel}
%!TeX root=./FVAnosov.tex

\subsection{Integral filling volume and integral simplicial volume}\label{sec-maindef}
Let $M$ be an oriented closed manifold. 
The $\ell^1$-norm on the space of singular chains $\chains{k}{M}{\Z}$ is defined by 
\[\norm{c}[1]=\norm{\sum_{i\in I}a_i\sigma_i}[1]=\sum_{i\in I}|a_i|,\]
where $c=\sum_{i\in I}a_i\sigma_i$ is an integral singular chain written as a reduced sum.

Via the $\ell^1$-norm, one can also define the \emph{integral filling norm} on the space of singular boundaries $\bounds{k}{M}{\Z}\subseteq\chains{k}{M}{\Z}$ by setting, for $b\in \bounds{k}{M}{\Z}$,
\[\fnorm{b}= \min \{\|z\|_1  \mid  z\in \zchains{k+1}M,\,\partial z=b\}.\]
%It is straightforward to check that it actually defines a norm on $\bounds{k}{M}{R}$. 
Being the $\ell^1$-norm $\Z$-valued, for any $b\in\zbounds{k}{M}$ there exists a $z\in \chains{k+1}{M}{\Z}$ such that $\partial z=b$ and $\fnorm{b}=\norm{z}[1]$. 

\begin{notation}
	In the following, whenever in a statement we write the integral filling norm $\fnorm{c}$ of a chain $c$, we are tacitly including the assertion that the chain $c$ is in fact a boundary.
\end{notation}
If $n$ is the dimension of the manifold $M$, then the group $\zhomol{n}{M} \cong \Z$ is generated by the so-called \emph{integral fundamental class}, denoted by $[M]_\Z$.
A cycle $c \in \zcycles{n}{M}$ representing the integral fundamental class is called an \emph{integral fundamental cycle}.

Any continuous map $f\colon M\to N$ between manifolds induces a chain map $f_*\colon \zchains{\bullet}{M}\to \zchains \bullet N$ on singular chains, which is norm non-increasing for both the $\ell^1$-norm on chains and the filling norm on boundaries. 
If, moreover, $f\colon M\to M$ is an orientation-preserving homomorphism and $z\in \zcycles{n}{M}$ is an integral fundamental cycle, then for any $i\in\N$ the chain $f_*^i(z)$ is still an integral fundamental cycle, so that it makes sense to consider the filling norm of the boundary $z-f_*^i(z)$. We use this fact to define the integral filling volume of the map $f$.
\begin{defn}[{\cite[Definition 1.1]{bertolotti2024length}}]
	Let $f\colon M\to M$ be an orientation-preserving homotopy equivalence and denote by $f_*\colon\zchains{n}{M}\to \zchains{n}{M}$ the induced map on singular chains. Let $z\in \zcycles{n}{M}$ be an integral fundamental cycle. The \textit{integral filling volume} of $f$ is defined as
	\[\fvz(f)\coloneqq \lim\limits_{i\to\infty}\frac{\fnorm{f^i_*(c)-c}}{i}.\]
\end{defn}

The limit in this definition always exists, is independent of the choice of the integral fundamental cycle $z$, and depends only on the homotopy class of $f$ \cite[Section 2]{bertolotti2024integral}. 
ntative.

The integral filling volume can be expressed in terms of integral simplicial volume, whose definition is as follows.
\begin{defn}
	Let $M$ be an oriented closed $n$-manifold. The \textit{integral simplicial volume} of $M$ is defined as 
	\[\norm{M}[\Z]\coloneqq \min\{\|z\|_1\mid [z]=[M]_{\Z}\in\zhomol{n}{M}\}.\]
\end{defn}

Given an orientation-preserving homeomorphism $f\colon M\to M$, the \emph{mapping torus} $\mtorus Mf$ is the $(n+1)$-manifold given by the quotient
\[\mtorus Mf= M\times [0,1]/\sim\ \]
with respect to the relation $(x,0)\sim(f(x),1)$ for any $x\in M$.
In this case, the manifold $M$ is called \emph{fiber} of the mapping torus and $f$ \emph{monodromy}. 

The role of the integral filling volume lies in its close relation to the integral simplicial volume of mapping tori.
\begin{teo}[{\cite[Theorem 3]{bertolotti2024integral}}]\label{thm-fv-sv}
	Let $M$ be an oriented closed manifold and $f\colon M\to M$ be an orientation-preserving homeomorphism, then
	\begin{align*}
		\fvz(f)=\lim\limits_{k\to+\infty}\frac{\norm{\mtorus M{f^k}}[\Z]}{k}.
	\end{align*}
\end{teo}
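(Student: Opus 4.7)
The plan is to prove both inequalities by constructing, for each $k$, chain-level passages between integral fundamental cycles of $\mtorus{M}{f^k}$ and fillings of $f^k_*(c)-c$ in $M$, controlled up to an additive error that is independent of $k$. Any such $O(1)$ correction disappears after dividing by $k$, so both limits coincide.

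\emph{Upper bound.} Fix an integral fundamental cycle $c$ of $M$, and for each $k$ let $w_k \in \zchains{n+1}{M}$ realize $\fnorm{f^k_*(c)-c}$. Apply the prism operator of \cref{sec-prism-operator} to $c$, obtaining a chain $P(c) \in C_{n+1}(M\times[0,1];\Z)$ with $\partial P(c) = c\times\{1\} - c\times\{0\}$ and $\norm{P(c)}[1] = (n+1)\norm{c}[1]$. Push $P(c)$ forward through the quotient $M\times[0,1] \to \mtorus{M}{f^k}$, so that its boundary becomes concentrated in the fiber $M \hookrightarrow \mtorus{M}{f^k}$ and equals (up to sign) $f^k_*(c)-c$; adjoining $w_k$ included in the fiber cancels this boundary. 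The resulting cycle represents the fundamental class of $\mtorus{M}{f^k}$ (checked by pairing with the cohomology pullback from $S^1$) and has $\ell^1$-norm at most $(n+1)\norm{c}[1] + \fnorm{f^k_*(c)-c}$. Dividing by $k$ and letting $k \to \infty$ yields $\limsup_k \norm{\mtorus{M}{f^k}}[\Z]/k \le \fvz(f)$.

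\emph{Lower bound.} Given a minimizing integral fundamental cycle $Z_k$ of $\mtorus{M}{f^k}$, my strategy is to cut along a fiber and project to $M$. Using the straightening operator of \cref{sec-straight}, replace $Z_k$ by a homologous cycle whose singular simplices lift compatibly to the infinite cyclic cover $M \times \R \to \mtorus{M}{f^k}$, so that the lifted chain $\widetilde Z_k$ sits in the fundamental domain $M\times[0,1]$ with $\partial \widetilde Z_k = c\times\{1\} - c\times\{0\}$ for some fundamental cycle $c$ of $M$ (the transverse intersection of $Z_k$ with the chosen fiber). Projecting via the first factor $p_1\colon M\times[0,1]\to M$ produces $w_k = p_{1*}(\widetilde Z_k) \in \zchains{n+1}{M}$ with $\partial w_k = c - f^k_*(c)$, because the deck transformation on boundary fibers is $f^k$. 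Hence $\fnorm{f^k_*(c)-c} \le \norm{w_k}[1] \le \norm{\widetilde Z_k}[1]$, and dividing by $k$ and passing to the limit gives $\fvz(f) \le \liminf_k \norm{\mtorus{M}{f^k}}[\Z]/k$.

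The main obstacle is ensuring the estimate $\norm{\widetilde Z_k}[1] \le \norm{Z_k}[1] + O(1)$ with the constant independent of $k$. A naive transversality-by-subdivision would inflate the $\ell^1$-norm by factors like $(n+2)!$ coming from barycentric subdivision in dimension $n+1$, which would only yield $C\,\fvz(f) \le \lim_k \norm{\mtorus{M}{f^k}}[\Z]/k$ for some $C<1$ and thus destroy the equality. The hard part is therefore to engineer the straightening in \cref{sec-straight} so that each straightened $(n+1)$-simplex has a canonical lift lying in a bounded window of the cover, and so that the cutting operation is norm non-expanding apart from a bounded boundary correction near the two fibers $M\times\{0,1\}$. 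Verifying that this boundary correction is controlled by a universal function of $M$ and $c$ alone, and in particular does not grow with $k$, is the technical heart of the argument.
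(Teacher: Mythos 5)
The paper only cites this theorem (Theorem~3 of \cite{bertolotti2024integral}) without reproducing a proof, so your attempt can only be checked on its own merits. Your upper bound is essentially correct and is the standard argument: fill $f^k_*(c)-c$ inside a fiber, glue the prism $q_*(P(c))$ to the filling, observe that the result is a fundamental cycle, and note that the $(n+1)\norm{c}[1]$ term is $O(1)$ in $k$. (Small nitpick: $\norm{P(c)}[1]\leq (n+1)\norm{c}[1]$, not equality, but this does not matter.)

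The lower bound, however, has a genuine gap which you acknowledge yourself but do not close. First, the ``straightening operator of \cref{sec-straight}'' does not exist for a general oriented closed manifold $M$: that section defines straight simplices $[p_0,\ldots,p_k]$ on tori $\T[n]$ via the linear structure of $\R^n$, and there is no analogous canonical straightening on an arbitrary $M$, so the tool you invoke is not available in the generality the theorem requires. Second, your description of the boundary of the lifted chain is off: if $\widetilde Z_k$ sits in the fundamental domain $M\times[0,1]$ of the infinite cyclic cover, then $\partial\widetilde Z_k$ decomposes as $z_1 - z_0$ with $z_1 = T_*(z_0)$ (where $T$ is the deck transformation involving $f^k$), not $c\times\{1\}-c\times\{0\}$ with the same $c$ at both ends; the displayed equality would make $p_{1*}(\partial\widetilde Z_k)=0$, contradicting your subsequent claim that it equals $c - f^k_*(c)$. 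Third, and most importantly, the estimate $\norm{\widetilde Z_k}[1]\le\norm{Z_k}[1]+O(1)$ with a constant independent of $k$ --- and simultaneously that the resulting cycle $c$ is a fundamental cycle of $M$ whose norm is uniformly bounded in $k$ --- is precisely what one has to prove, and a naive transversality-by-subdivision argument only yields a multiplicative constant (as you point out), which destroys the equality. Writing ``verifying that this boundary correction is controlled\ldots is the technical heart of the argument'' and then not supplying that verification means the lower bound is asserted, not proved.
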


\begin{notation}
	As in this work only integral coefficients appear, we will often omit the word ``integral''; in particular, the terms ``fundamental cycles'' and ``filling volume'' will always refer to their integral version.  
\end{notation}

\subsection{Straight simplices on the torus}\label{sec-straight}

Fix $k\in \N$ and for $i=0,\ldots,k$ let $e_i \in \R^{k +1}$ be the $i$-th vector in the canonical basis of $\R^{k+1}$.
We denote by $\Delta^k \subset \R^{k+1}$ the convex hull (with respect to the Euclidean metric) of $e_0,\ldots,e_{k}$, that is
\[\Delta^k =\left\{\left.\sum_{i = 0}^{k}\lambda_ie_i \right|0 \leq \lambda_i \leq 1, \ \sum_{i = 0}^{k}\lambda_i = 1 \right\}\subset \R^{k+1} .\]

Let us now consider another integer $n\in \N$ and let $p_0,\dots,p_{k}$ be points in $\R^n$. We define the \emph{straight simplex with vertices $p_0,\dots,p_{k}$} as the singular $k$-simplex in $\R^n$ given by the map
\[
\begin{matrix}
	\str_n(p_0,\ldots,p_{k})\colon& \Delta^k                        & \longrightarrow     & \R^n                               \\
	& \sum_{i = 0}^{k} \lambda_ie_i & \longmapsto & \sum_{i = 0}^{k} \lambda_ip_i\
\end{matrix},
\]
whose image is the (Euclidean) convex hull of the points $p_0,\ldots,p_k$.

We denote by $\T[n] = \R^n/\Z^n$ the $n$-dimensional torus (or, simply, the $n$-torus) and by $\pi_n \colon  \R^n \to T^n$ the quotient map. 
Any singular simplex in $\R^n$ projects to a simplex in $\T[n]$ via $\pi_n$. We are particularly interested in the projection of straight simplices, for which we introduce the following notation: 
\[[p_0,\ldots,p_{k}]\coloneqq \pi_n (\str_n(p_0,\ldots,p_{k}))\in C_k(T^n,\Z).\]

\begin{oss}\label{rmk: translation invariant}
	As $\T[n] =\R^n/\Z^n$, we have that for every $v\in\Z^n$ the two symbols $[p_0,\ldots,p_{k}]$ and $[p_0+v,\ldots,p_{k}+v]$ represent the same simplex in $C_k(T^n,\Z)$.
\end{oss}
\subsection{Prism operator on topological spaces}\label{sec-prism-operator}

We recall the following construction, which allows us to see the product of a simplex and an interval as a singular chain. 
Let $I=[0,1]$ and consider the space $\Delta^{k-1}\times I\subseteq \R^{k+1}=\R^{k}\times \R$. As before, denote by $e_0,\ldots,e_{k}$ the canonical basis of $\R^{k+1}$.
Then, the $k$-chain
\[p_{k}=\sum_{i=0}^{k-1} (-1)^{i}\str_{k+1}(e_0,\ldots,e_{k-i-1},e_{k-i-1}+e_{k},\ldots,e_{k-1}+e_{k})\in\zchains {k}{\Delta^{k-1}\times I}\]
is supported exactly on the product $\Delta^{k-1}\times I$.

Let now $M,N$ be two manifolds and $f,g\colon M\to N$ be continuous maps that are homotopic via the homotopy $F\colon M\times[0,1]\to N$ (i.e., $F(\cdot,0)=f$ and $F(\cdot,1)=g$). 
For any $k\in\N$, we define a \textit{prism operator} 
\[P_{k-1}\colon \zchains{k-1}{M}\to \zchains{k}{N}\]
 by sending any $(k-1)$-singular simplex $\sigma\colon\Delta^{k-1}\to M$ to the chain
\[P_{k-1}(\sigma)=(F\circ(\sigma\times \operatorname{Id}))_*(p_k),\]
and then extending the definition to the space $\zchains{k-1}{M}$ by linearity.

These prism operators satisfy the following relation
\[\partial_{k+1}\circ P_k-P_{k-1}\circ\partial_k=(-1)^k(g_*-f_*).\]
Moreover, $P_k$ has operator norm bounded from above by $k+1$, meaning that for any chain $c\in \zchains{k}{X}$, it holds that $\norm{P_k(c)}[1]\le (k+1)\norm{c}[1]$.

In the following, if no confusion arises, we drop the index $k$ in the prism operator $P_k$.
\begin{oss}
	The definition of prism operator that we gave  slightly differs from what usually is found in the literature (e.g, \cite{hatcher2005algebraic}): our prism operator coincides with the classical one when $k$ is even, while it changes sign when $k$ is odd. 
	In \cref{upper2} we will see that this definition is more convenient for our purpose, as our operator $P_n$ applied to particular kinds of fundamental cycles of the $n$-torus (which will be called \emph{parallelogram-like cycles}) yields fundamental cycles of the same kind of the $(n+1)$-torus.
\end{oss}

\subsection{Matrices and self-homotopy equivalences of tori}\label{sec-matrices}
Fix $n\in\N$.
Any matrix $A\in \SL(n,\Z)$ induces a linear map $\widetilde{f}_A\colon\R^n\to \R^n$, which descends to a homeomorphism $f_A\colon \T[n]\to \T[n]$ on the $n$-torus. In this context we say that $f_A$ is the \textit{map induced by} $A$ and that $A$ is a \emph{matrix associated to} the map $f_A$. 
Notice that if $(f_A)_*\colon \zhomol 1{\T[n]}\to\zhomol 1{\T[n]}$ denotes the map induced by $f_A$ on the first homology group of $\T[n]$, then the matrix $A$ is the one representing $(f_A)_*$ with respect to the basis of $\zhomol{1}{\T[n]}\cong\Z^n$ consisting of the homology classes represented by the $1$-cycles $[\mathbf 0,e_1],\ldots,[\mathbf 0,e_n]$ (where $\mathbf 0$ denotes the zero vector of $\R^n$). 

Since $\T[n]$ is a $K(\Z^n,1)$-space, any automorphism in $\operatorname{Aut}(\Z^n)\cong \operatorname{GL}(n,\Z)$ is induced by a  self-homotopy equivalence of $\T[n]$, which is unique up to homotopy.
In particular, the map $A\mapsto f_A$ is a noncanonical isomorphism between  $\SL(n,\Z)$ and the group of homotopy classes of orientation-preserving self-homeomorphism of the $n$-torus.

For any \( n \times n \) matrix \( A \), the \textit{spectral radius} \( \rho(A) \) is the maximum modulus of its eigenvalues:
\[\rho (A)=\max\{|\lambda|\mid \lambda\text{ is a complex eigenvalue of }A\}.\]
The spectral radius $\rho(f)$ of a linear map $f\colon \R^n\to \R^n$ is the spectral radius of any matrix associated to it. Being the eigenvalues of matrices conjugacy invariant, $\rho(f)$ is well-defined and independent of the chosen basis. 

\begin{oss}\label{basic_ineq}
For a linear map $f\colon \R^n\to \R^n$ we have the following basic inequalities:
\[\log(\rho(f))\leq \sum_{|\lambda|>1}\log|\lambda|\leq n\log(\rho(f)),\]
where $\lambda$ ranges among all the eigenvalues of $f$.
\end{oss}

We recall here the Gelfand formula, which is a classical result for the spectral radius that will become useful in the proof of the lower bound (\cref{lower_highdim}).
\begin{prop}[Gelfand formula]\label{gelfand}
	Fix $n\in \N$. Let $A$ be an $n\times n$ matrix and $\norm{\cdot}[]$ be any matrix norm. Then it holds that
	\[\rho(A)=\lim\limits_{j\to \infty}\norm{A^j}[]^{{1}/{j}}.\]
\end{prop}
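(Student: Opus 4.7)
The plan is to prove the two inequalities $\rho(A) \leq \liminf_{j\to\infty} \|A^j\|^{1/j}$ and $\limsup_{j\to\infty} \|A^j\|^{1/j} \leq \rho(A)$ separately. A preliminary reduction is convenient: since all norms on the finite-dimensional space $M_n(\mathbb{C})$ are equivalent, any two matrix norms $\|\cdot\|$ and $\|\cdot\|'$ satisfy $c\|B\|' \leq \|B\| \leq C\|B\|'$ for positive constants $c,C$, so $\|A^j\|^{1/j}$ and $\|A^j\|'^{1/j}$ have the same limit (if one exists). Thus I may assume $\|\cdot\|$ is the operator norm induced by a Hermitian norm on $\mathbb{C}^n$; in particular it is submultiplicative, i.e. $\|BC\| \leq \|B\|\|C\|$.

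For the lower bound, let $\lambda \in \mathbb{C}$ be any eigenvalue of $A$ with eigenvector $v \neq 0$. Then $A^j v = \lambda^j v$, and applying the operator norm gives $|\lambda|^j \|v\| = \|A^j v\| \leq \|A^j\| \cdot \|v\|$, hence $|\lambda| \leq \|A^j\|^{1/j}$. Taking the maximum over all eigenvalues and then $\liminf$ over $j$, one obtains $\rho(A) \leq \liminf_{j \to \infty} \|A^j\|^{1/j}$.

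For the upper bound, fix $\varepsilon > 0$ and consider the matrix $B \coloneqq A/(\rho(A)+\varepsilon)$, which has spectral radius $\rho(B) = \rho(A)/(\rho(A)+\varepsilon) < 1$. The key step is to show that $B^j \to 0$ in $M_n(\mathbb{C})$. This is proved by passing to the Jordan normal form: writing $B = S J S^{-1}$ with $J$ block-diagonal consisting of Jordan blocks $\lambda I_k + N_k$ (with $N_k$ nilpotent of index $k$), one computes that $(\lambda I_k + N_k)^j$ has entries bounded by $\binom{j}{k-1} |\lambda|^{j-k+1}$, which tends to $0$ as $j \to \infty$ because $|\lambda| \leq \rho(B) < 1$ and the polynomial growth is dominated by the geometric decay. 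Consequently $\|B^j\| \to 0$, so $\|B^j\| \leq 1$ for all sufficiently large $j$, i.e.\ $\|A^j\| \leq (\rho(A)+\varepsilon)^j$, which gives $\|A^j\|^{1/j} \leq \rho(A) + \varepsilon$. Letting $j \to \infty$ and then $\varepsilon \to 0$ yields $\limsup_{j \to \infty} \|A^j\|^{1/j} \leq \rho(A)$, completing the proof.

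The only nontrivial step is the decay $B^j \to 0$ when $\rho(B) < 1$, which requires the Jordan decomposition (or some equivalent structural result, such as Schur triangularization combined with a careful similarity transformation to make the off-diagonal entries arbitrarily small). Everything else is a direct application of submultiplicativity and the equivalence of norms, so the proof is compact once the Jordan form is invoked.
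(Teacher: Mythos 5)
The paper does not supply a proof of this proposition; it is stated as a classical fact (``a classical result for the spectral radius''), so there is no in-paper argument to compare against. Your proof is a correct and essentially standard proof of the Gelfand spectral radius formula: reduce to a submultiplicative operator norm by equivalence of norms on $M_n(\mathbb{C})$, get the lower bound $\rho(A)\le\liminf_j\|A^j\|^{1/j}$ from the eigenvector relation $A^j v=\lambda^j v$, and get the upper bound $\limsup_j\|A^j\|^{1/j}\le\rho(A)$ by rescaling $B=A/(\rho(A)+\varepsilon)$ and showing $B^j\to 0$ via the Jordan form, where $(\lambda I_k+N_k)^j=\sum_{i=0}^{k-1}\binom{j}{i}\lambda^{j-i}N_k^i$ decays geometrically modulo polynomial factors. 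Two tiny remarks for completeness: your entry bound for a Jordan block should carry a harmless constant factor $k$, i.e.\ something like $k\binom{j}{k-1}|\lambda|^{j-k+1}$ for $j\ge 2(k-1)$, and the equivalence-of-norms reduction transfers $\liminf$ and $\limsup$ separately (not only a putative limit), which is what your two-sided argument actually uses; both points are cosmetic and do not affect the validity of the argument.
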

We denote by \( \norm\cdot[\infty] \) the supremum norm on both the space of vectors \( \R^n \) and the space of matrices \( \R^{n \times n} \), defined for a vector \( v \in \R^n \) or a matrix \( M \in \R^{n \times n} \) as the maximum absolute value of its entries.

\section{Lower bound}\label{lower_highdim}
%!TeX root=./FVAnosov.tex
Let $f\colon T^2\to T^2$ be an Anosov map on the $2$-dimensional torus and $\lambda,\lambda^{-1}$ (with $|\lambda|\geq 1$) the eigenvalues of the induced map $f_*\colon\zhomol{1}{\T}\to\zhomol{1}{\T}$. Then there exists a constant $C>0$, not depending on the map $f$, for which $\fvz (f)>C\log|\lambda|$ \cite[Proposition 4.3]{bertolotti2024length}.   
In this section, we generalize this fact to all manifolds and all homeomorphisms.
\mainteolower*
The proof of \cref{mainteo-lower} relies on the following three lemmas.
\begin{lemma}\label{logtor}
    Let $n\in\N$. If $M$ is an oriented closed $n$-manifold and $f\colon M\to M$ is an orientation-preserving homeomorphism, then 
    \[\fvz(f)\ge \frac2{n(n+1)\log(n+1)}\cdot\left(\limsup\limits_{k\to\infty}\frac{\log|\tors\zhomol 1{\mtorus M{f^k}}|}{k}\right),\]
    where $\mtorus{M}{f^k}$ is the mapping torus with fiber $M$ and monodromy $f^k$.
\end{lemma}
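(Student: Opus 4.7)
The plan is to combine Theorem~\ref{thm-fv-sv}, which identifies $\fvz(f)$ with $\lim_{k\to\infty}\norm{\mtorus M{f^k}}[\Z]/k$, with a general torsion bound for closed oriented manifolds. Concretely, the content of the lemma is the following cell-complexity-type estimate: for any closed oriented $(n+1)$-manifold $N$,
\[
\log\bigl|\tors\zhomol 1N\bigr| \;\le\; \frac{n(n+1)\log(n+1)}{2}\cdot\norm N[\Z]. \qquad(\ast)
\]
Granting $(\ast)$ and applying it to $N=\mtorus M{f^k}$, dividing both sides by $k$ and then passing to the $\limsup$ on the left and to the limit $\fvz(f)$ on the right (via Theorem~\ref{thm-fv-sv}) immediately yields the lemma.

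To prove $(\ast)$ I would fix an efficient fundamental cycle $z=\sum_i a_i\sigma_i$ of $N$ with $\norm z[1]=\norm N[\Z]$ and form the associated semi-simplicial set $X_z$ whose top-dimensional simplices are the distinct $\sigma_i$ appearing in $z$ and whose lower-dimensional simplices are their iterated faces; this comes with a natural continuous realization map $p\colon|X_z|\to N$, and $z$ remains a cycle in $C_{n+1}(|X_z|;\Z)$ whose class is sent to $[N]_\Z$ by $p_*$. Combining the Universal Coefficient Theorem with Poincar\'{e} duality gives $\tors\zhomol 1N\cong\tors H_{n-1}(N;\Z)$, and naturality of the cap product factors the Poincar\'{e} duality isomorphism $\cap [N]\colon H^2(N;\Z)\to H_{n-1}(N;\Z)$ as
\[
H^2(N;\Z)\xrightarrow{p^*}H^2(|X_z|;\Z)\xrightarrow{\cap [z]}H_{n-1}(|X_z|;\Z)\xrightarrow{p_*}H_{n-1}(N;\Z).
\]
Since the composition is an isomorphism, the image of $(\cap [z])\circ p^*$ is a subgroup of $H_{n-1}(|X_z|;\Z)$ mapped isomorphically onto $H_{n-1}(N;\Z)$ by $p_*$; in particular $|\tors H_{n-1}(N;\Z)|\le|\tors H_{n-1}(|X_z|;\Z)|$.

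The final task is to control $|\tors H_{n-1}(|X_z|;\Z)|$ combinatorially. Since the cellular complex is free, this torsion equals $\tors\coker\partial_n$, which by the Smith normal form is bounded by the absolute value of any $r\times r$ minor of $\partial_n\colon C_n(|X_z|;\Z)\to C_{n-1}(|X_z|;\Z)$ with $r=\operatorname{rank}\partial_n$. A combinatorial count gives at most $\binom{n+2}{k+1}\norm N[\Z]$ simplices of dimension $k$, so $r\le(n+2)\norm N[\Z]$; moreover each column of $\partial_n$ has nonzero entries of total $\ell^1$-mass at most $n+1$, hence $\ell^2$-norm at most $\sqrt{n+1}$. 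Hadamard's inequality then yields a bound of the shape $\log|\tors H_{n-1}(|X_z|;\Z)|\le\frac{(n+2)\log(n+1)}{2}\norm N[\Z]$, which is majorized by the right-hand side of $(\ast)$ for $n\ge 2$; the case $n=1$ is trivial since closed orientable surfaces have torsion-free $H_1$. The main obstacle is the cap-product naturality step: since $|X_z|$ is not a manifold, $\cap [z]$ is far from being an isomorphism, and the key point is to exploit the factorization above to obtain a torsion-preserving embedding $H_{n-1}(N;\Z)\hookrightarrow H_{n-1}(|X_z|;\Z)$ rather than a mere surjection.
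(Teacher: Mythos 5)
Your high-level strategy is exactly the paper's: reduce the lemma to a torsion bound of the form $\log|\tors\zhomol 1N|\le \tfrac{n(n+1)\log(n+1)}{2}\norm N[\Z]$ for a closed oriented $(n+1)$-manifold $N$, apply it with $N=\mtorus M{f^k}$, divide by $k$, and invoke \cref{thm-fv-sv}. The paper, however, simply cites the torsion bound as a known result (Sauer's Theorem~3.2 in \cite{sauer2016volume}, phrased there in terms of the number $s_k$ of distinct simplices in a reduced fundamental cycle, which is $\le\norm N[\Z]$) and is done in a few lines. You instead try to re-derive Sauer's estimate from scratch, which is a legitimate but considerably longer route.

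Within your re-derivation of the torsion bound there is one genuine soft spot, and it is not the one you flag. The cap-product naturality step is in fact carried out correctly: the factorization $\cap[N]=p_*\circ(\cap[z'])\circ p^*$ together with the fact that the composite is an isomorphism does give a torsion-preserving embedding $H_{n-1}(N;\Z)\hookrightarrow H_{n-1}(|X_z|;\Z)$, exactly as you describe. The actual gap is in the Hadamard step: the inference ``each column of $\partial_n$ has $\ell^1$-mass at most $n+1$, hence $\ell^2$-norm at most $\sqrt{n+1}$'' is only valid when all $n+1$ faces of an $n$-simplex of $X_z$ are pairwise distinct. In the semi-simplicial set built from the faces of a singular cycle two faces of a simplex may coincide as singular simplices; if two coincide with the same sign the corresponding column entry is $\pm2$ (and so on), and for an integer vector $v$ with $\|v\|_1\le n+1$ one only has $\|v\|_2\le n+1$. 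With that corrected bound, Hadamard yields a constant $(n+2)\log(n+1)$ rather than $\tfrac{(n+2)\log(n+1)}{2}$, and for $n=2$ this exceeds the target $\tfrac{n(n+1)\log(n+1)}{2}=3\log 3$, so your argument would not quite recover the stated constant in low dimension without an extra device (e.g.\ normalizing the chain complex, excluding simplices with coincident faces, or simply handling small $n$ separately). Since the paper's constant is whatever Sauer's theorem gives, the cleanest fix is to do what the paper does and cite \cite[Theorem~3.2]{sauer2016volume} directly.
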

Notice that this lemma is a generalization of a result by Frigerio and the first author for which the proof works analogously \cite[Proposition 4.3]{bertolotti2024length}.

\begin{proof}
    For $k\in\N$, let $c_k$ be an integral fundamental cycle for $\mtorus M{f^k}$ realizing the integral simplicial volume, that is $\norm {c_k}[1] = \norm {\mtorus M{f^k}}[\Z]$.
    If $s_k$ denotes the number of singular simplices involved in the reduced expression of $c_k$, then a result of Sauer \cite[Theorem 3.2]{sauer2016volume} implies that
    \[
        \log |\tors \zhomol 1 {\mtorus M{f^k}}|
        \leq \frac{n(n+1)\log(n+1)}2 \cdot s_k.
    \]
    Thus, by \cref{thm-fv-sv} we get
    \[
        \fvz(f) =
        \lim_{k\to\infty}\frac{\norm {\mtorus M{f^k}}[\Z]}k\geq
        \limsup_{k\to\infty}\frac{s_k}k\geq
        \limsup_{k\to\infty}\left(\frac 2{n(n+1)\log(n+1)}\cdot\frac{\log |\tors \zhomol 1 {\mtorus M{f^k}}|}k\right),
    \]
    as desired.
\end{proof}

Given a square matrix $M\in\GL(n,\R)$ with (complex) eigenvalues $\lambda_1,\ldots,\lambda_n$, we denote by $\widehat\det(M)$ the product of all the eigenvalues of $M$ outside the unit circle, that is
\[\widehat{\det}M=\prod_{|\lambda_i|>1}\lambda_i.\]

In view of the previous lemma, to give a lower bound for the filling volume of a map $f\colon M\to M$, one can study the growth of the torsion of the first homology groups of the mapping tori $\mtorus M{f^k}$. 
On the other hand, if $f_*\colon \zhomol 1M\to \zhomol 1M$ is the map induced on the first homology group by $f$, then a Mayer-Vietoris argument yields the isomorphism $\zhomol 1{\mtorus M{f^k}} =\coker(f^k_*-\id_G)\oplus \Z$.
This motivates the next lemma.

\begin{lemma}\label{torcoker}
    For any matrix $A\in \SL(n,\Z)$ there exists a subsequence of natural numbers $\{k_h\}$ such that
    \[\lim\limits_{h\to+\infty}\frac{\log|\tors(\coker(A^{k_h}-I))|}{k_h}= \log|\widehat \det (A)|.\]
\end{lemma}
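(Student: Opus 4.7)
The plan is to pass to a subsequence along which $A^{k_h} - I$ becomes invertible over $\mathbb{Q}$, so that the torsion cokernel collapses to an honest determinant, and then analyze the resulting eigenvalue product.

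First I would let $M$ be the least common multiple of the orders of the root-of-unity eigenvalues of $A$ (with $M := 1$ if there are none) and enumerate by $\{k_h\}$ the positive integers coprime to $M$. For each such $k_h$, no eigenvalue $\lambda$ of $A$ satisfies $\lambda^{k_h} = 1$: root-of-unity eigenvalues are excluded because their orders divide $M$ and are therefore coprime to $k_h$, while non-root-of-unity eigenvalues never satisfy $\lambda^k = 1$. Consequently $A^{k_h} - I$ is invertible over $\mathbb{Q}$, the cokernel $\Z^n/(A^{k_h} - I)\Z^n$ is finite and entirely torsion, and
\[
|\tors \coker(A^{k_h} - I)| = |\det(A^{k_h} - I)| = \prod_{i=1}^{n} |\lambda_i^{k_h} - 1|,
\]
where $\lambda_1, \dots, \lambda_n$ are the eigenvalues of $A$ with multiplicity.

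Next I would analyze the contribution $\log|\lambda^{k_h} - 1|/k_h$ for each eigenvalue, splitting into four cases. When $|\lambda| > 1$, the identity $|\lambda^k - 1| = |\lambda|^k |1 - \lambda^{-k}|$ gives the limit $\log|\lambda|$. When $|\lambda| < 1$, the factor $|\lambda^k - 1|$ tends to $1$ and the contribution vanishes. When $\lambda$ is a root of unity of order $m$, the coprimality $\gcd(k_h, m) = 1$ forces $\lambda^{k_h}$ to remain a primitive $m$-th root of unity, so $|\lambda^{k_h} - 1|$ stays between two positive constants depending only on $m$, and the contribution again vanishes. The subtle case is $|\lambda| = 1$ with $\lambda$ not a root of unity: the upper bound $|\lambda^k - 1| \le 2$ is immediate, while the matching lower bound $|\lambda^k - 1| \ge c\, k^{-D}$ for some $c, D > 0$ depending on $\lambda$ follows from Baker's theorem on linear forms in logarithms, applied to $k \log \lambda - 2\pi i m$ with $m$ the integer closest to $k \arg(\lambda)/(2\pi)$. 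Hence the contribution tends to $0$ here too.

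Summing the four contributions recovers $\sum_{|\lambda_i| > 1} \log|\lambda_i| = \log |\widehat{\det}(A)|$, as required. Equivalently, one can group eigenvalues according to the irreducible factors $q_j$ of the characteristic polynomial of $A$ and apply the classical Gelfond--Lehmer formula
\[
\frac{1}{k} \log |\operatorname{Res}(q_j(x), x^k - 1)| \longrightarrow \log M(q_j)
\]
for each non-cyclotomic $q_j$ (where $M(q_j)$ is its Mahler measure), combined with a trivial bound for the cyclotomic factors along the coprime subsequence. The core difficulty is precisely the unit-circle non-root-of-unity case, where elementary estimates are insufficient and one must appeal to Baker's theorem (or, equivalently, to the classical growth rate of Pierce numbers).
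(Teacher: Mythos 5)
Your proof works correctly only when $1$ is not an eigenvalue of $A$, and the paper devotes half of its argument to the case you have silently excluded. Concretely: if $\lambda = 1$ is an eigenvalue, then your claim that no eigenvalue satisfies $\lambda^{k_h} = 1$ fails, since $1^{k_h} = 1$ for every $k_h$ --- the coprimality of $k_h$ with $M$ only excludes roots of unity of order $m > 1$, not $m = 1$. In that case $\det(A^{k_h} - I) = 0$ for \emph{all} $k$, the cokernel $\Z^n/(A^{k_h}-I)\Z^n$ has positive free rank, and your identity $|\tors\coker(A^{k_h}-I)| = |\det(A^{k_h}-I)|$ becomes the false statement ``finite number $=$ $0$''. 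Such $A$ certainly exist in $\SL(n,\Z)$: any unipotent matrix, or any block-diagonal matrix with a unipotent block, gives eigenvalue $1$, and the lemma as stated must hold for all of these.

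The paper handles this case separately. When $1$ is an eigenvalue it introduces the auxiliary operator $C_k \colon \im(\id - A) \to \im(\id - A)$, $C_k(v - Av) = v - A^k v$, which has $\det C_k = \prod_{\lambda\neq 1}|\lambda^k-1|/|\lambda-1|$, and then sandwiches $|\tors\coker(\id - A^{k_h})|$ between $|\det C_{k_h}|$ and $\prod_{\lambda\neq 1}|\lambda^{k_h}-1|$ (the latter obtained by passing to $\Z^n/\ker(\id - A)$). Both bounds yield the same limit after dividing by $k_h$, since the denominator factors $|\lambda - 1|$ are constants. Your per-eigenvalue analysis (including the appeal to Baker's theorem, which is exactly the content of the Bergeron--Venkatesh lemma cited in the paper) is correct and matches the paper for the remaining eigenvalues, and your choice of coprime subsequence is a fine way to guarantee $\det C_{k_h} \neq 0$; but you still need to replace the determinant-of-$A^{k_h}-I$ computation by the $C_k$ argument, or by an equivalent reduction that quotients out the generalized $1$-eigenspace first.
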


\begin{proof}
	For a fixed $k$, let $B_k=A^k-I$.
    We compute the torsion of the cokernel of $B_k$ seen as a morphism of $\Z^n$. Let $r$ be the rank of $B_k$. 
    By the Smith normal form for integral matrices \cite[Proposition 2.11]{aluffi2009algebra} there exist matrices $P,Q\in \GL(n,\mathbb{Z})$ such that $PB_kQ$ is a diagonal $n\times n$ matrix with entries $d_1,\ldots, d_n$, where $d_i>0$  for any $1\leq i\leq r$ and $d_{r+1}=\ldots =d_n=0$.
    It follows that $\im(PB_kQ)=d_1\mathbb{Z}\oplus\dots\oplus d_r\Z\subseteq \mathbb{Z}^n$. As $P,Q$ are invertible,
    we have 
    \[
        \coker{(B_k)}\cong\coker(PB_kQ)\cong 
        \Z/d_1\Z\oplus\cdots\oplus\Z/d_r\Z\oplus\Z^{n-r},
    \]
    and so
    \[
        |\tors(\coker(B_k))|=
        \prod_{i=1}^{r}d_i.
    \]

    Assume at first that $1$ is not an eigenvalue of $A$, so that there is a strictly increasing subsequence $(k_h)_{h\in\N}$ of positive integers for which $A^{k_h}$ does not admit $1$ as eigenvalue. In particular, all eigenvalues of $B_{k_h}$ are nonzero, implying that $r=n$ and $|\det(PB_{k_h}Q)|=\prod_{i=1}^{n}d_i>0$. In this case, 
    \[
        |\tors(\coker(B_{k_h}))|=\prod_{i=1}^{n}d_i=|\det(PB_{k_h}Q)|=|\det(B_{k_h})|,
    \]
    so that \cite[Lemma, Section 2.5]{bergeron2013asymptotic}
    \[
        \lim_{h\to\infty}\frac{\log |\tors(\coker(B_{k_h}))|}{k_h}=\lim_{h\to\infty}\frac{\log|\det(A^{k_h}-\id)|}{k_h}=\widehat\det A.
    \] 

    If $1$ is an eigenvalue of $A$, then let us consider for any $k\in \N$ the operator $C_k\colon \im{(\id-A)}\to \im{(\id-A)}$ defined by 
    \[C_k(v-Av)\coloneqq v-A^kv.\]
    Then 
    \[\det(C_k)=\prod_{ \lambda\neq1}\frac{|\lambda^k-1|}{|\lambda-1|},\]
    where $\lambda$ ranges among the eigenvalues of $A$ \cite[Section 2.6]{bergeron2013asymptotic}.
    As before, we choose a strictly increasing sequence of positive integers $(k_h)_{h\in\N}$ such that $\det(C_{k_h})\neq0$, and by applying the Smith normal  form we obtain 
    \[|\tors(\coker(C_{k_h}))|=|\det(C_{k_h})|.\]
    
    Since $\im (C_{k_h})=\im(\id-A^{k_h})$, we have $\coker(C_{k_h})\subseteq\coker(\id-A^{k_h})$, so that
    \[\prod_{\lambda\neq1}\frac{|\lambda^{k_h}-1|}{|\lambda-1|}=|\tors(\coker(C_{k_h}))|\le|\tors(\coker(\id-A^{k_h}))|.\]

    By considering the map induced by $\id-A^{k_h}$ on $\Z^n/\ker(\id-A)$, one gets an upper bound \cite[Section 2.6]{bergeron2013asymptotic}
    \[|\tors(\coker(\id-A^{k_h}))|\le\prod_{\lambda\neq1} |\lambda^{k_h}-1|.\]
    Thus, we have that
    \[\sum_{\lambda\neq1}\frac{\log|\lambda^{k_h}-1|-\log|\lambda-1|}{k_h}\le \frac{\log|\tors(\coker(\id-A^{k_h}))|}{k_h}\le\sum_{\lambda\neq1} \frac{\log|\lambda^{k_h}-1|}{k_h}\]
    from which we deduce \cite[Lemma, Section 2.5]{bergeron2013asymptotic} that
        \[\lim\limits_{h\to+\infty}\frac{\log|\tors(\coker(A^{k_h}-I))|}{k_h}
        =\lim\limits_{h\to+\infty}\sum_{\lambda\neq1} \frac{\log|\lambda^{k_h}-1|}{k_h}
        = \log|\widehat \det (A)|.\]
\end{proof}
If $\zhomol 1{\mtorus M{f^k}}$ is torsion free (for example when $M=T^n$), then the last two lemmas are enough to prove \cref{mainteo-lower}.
The following lemma takes care of the torsion case.
\begin{lemma}\label{torsion}
    Let $G$ be an abelian group with torsion group $T=\tors(G)$. Let $h\colon G\to G$ be an endomorphism and $\overline{h}\colon G/T\to G/T$ the map induced by $h$ on $G/T$. Then, it holds that
    \[|\tors(\coker(\overline{h}))|\le |\tors(\coker(h))|.\]
\end{lemma}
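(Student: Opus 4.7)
The plan is to apply the snake lemma to extract a short exact sequence of cokernels and then use a standard torsion-lifting argument. Since any endomorphism of $G$ preserves $T=\tors(G)$, the map $h$ restricts to $h|_T\colon T\to T$ and descends to $\overline h\colon G/T\to G/T$, yielding a commutative diagram with exact rows of the form $0\to T\to G\to G/T\to 0$ in which the vertical arrows are $h|_T$, $h$, and $\overline h$.

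First, I would invoke the snake lemma to produce the six-term exact sequence
\[\ker(h|_T)\to\ker(h)\to\ker(\overline h)\to\coker(h|_T)\to\coker(h)\to\coker(\overline h)\to 0,\]
and then focus on its last three nonzero terms, which give the short exact sequence
\[0\to K\to\coker(h)\to\coker(\overline h)\to 0,\]
where $K$ denotes the image of the connecting map $\coker(h|_T)\to\coker(h)$.

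Next, I would observe that $K$ is a torsion group: it is a quotient of $\coker(h|_T)$, which is itself a quotient of $T$, and $T$ is torsion by definition. Finally, I would apply the following elementary lifting principle: if $B$ is a torsion subgroup of an abelian group $A$, then every torsion element of $A/B$ lifts to a torsion element of $A$, because if $n\overline a=0$ in $A/B$ then $na\in B$, and torsionness of $B$ yields some $m\geq 1$ with $mna=0$. Taking $A=\coker(h)$ and $B=K$, the surjection $\coker(h)\twoheadrightarrow\coker(\overline h)$ restricts to a surjection $\tors(\coker(h))\twoheadrightarrow\tors(\coker(\overline h))$, which immediately yields the desired cardinality inequality.

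There is no substantial obstacle here: the argument is short once one pairs the snake lemma with the torsion-lifting principle, and the only point requiring a moment's verification is that the kernel $K$ of the surjection $\coker(h)\twoheadrightarrow\coker(\overline h)$ is torsion, which is immediate from the snake lemma presentation.
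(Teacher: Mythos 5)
Your proof is correct. It establishes the same key surjection $\coker(h)\twoheadrightarrow\coker(\overline h)$ and draws the same conclusion that it restricts to a surjection between torsion subgroups, but you reach it by a cleaner structural route: the snake lemma applied to the two rows $0\to T\to G\to G/T\to 0$ immediately yields the short exact sequence $0\to K\to\coker(h)\to\coker(\overline h)\to 0$ in which $K$ is a quotient of $\coker(h|_T) = T/h(T)$ and hence torsion, and the general principle that torsion elements of $A/B$ lift to torsion elements of $A$ whenever $B$ is torsion then finishes the job. The paper instead constructs the surjection $\psi$ by hand from the quotient maps $\pi$, $p$, $\overline p$ and carries out the diagram chase at the level of $G$: given $\alpha\in\tors(\coker\overline h)$ it lifts to $a\in G$, writes $a^n = h(c)\cdot t$ with $t\in T$, and notes $a^{nm}\in\im h$ once $t^m=1$. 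This is exactly the same torsion-lifting phenomenon, but the paper leaves the torsion kernel $K$ implicit. Your version makes explicit \emph{why} the argument works---the kernel of the induced map on cokernels is torsion---at the modest cost of invoking the snake lemma, whereas the paper's version is fully self-contained and elementary but less transparent about what drives the lift.
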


Notice that the map $\overline h$ is well-defined as any torsion element in a group is sent to a torsion element by any endomorphism.
\begin{proof}
    Let us consider the quotient maps $\pi\colon G\to G/T$, $p\colon G\to \coker h$ and $\overline p\colon G/T\to \coker(\overline{h})$.

    Since for any $g\in G$ we have that
    \[\overline p(\pi(h(g)))=\overline{p}(\overline{h}(\pi(g)))=0,\]
    it holds the inclusion $\im(h)\subseteq \ker(\overline p\circ \pi)$.
    In particular, there is a surjective homomorphism ${\psi}\colon\coker h\to \coker\overline{h}$ making the following diagram commute:
    \begin{figure}[H]\begin{tikzcd}
        G \arrow[r, "h"] \arrow[d, "\pi", two heads] & G \arrow[d, "\pi", two heads] \arrow[r, "p", two heads] & \coker h \arrow[d, "\psi", two heads] \\
        G/T \arrow[r, "\overline{h}"]                 & G/T \arrow[r, "\overline{p}", two heads]                 & \coker \overline h.                   
    \end{tikzcd} \end{figure}
    To prove the statement, it is sufficient to show that ${\psi}$ restricts to a surjective homomorphism between the torsion subgroups, that can be proved by some diagram chasing as follows.
    
    Let $\alpha\in \tors(\coker\overline{h})$, so that $\alpha^n=1_{\coker \overline h}$ for some $n\in \N$. Since $\overline{p}\circ \pi$ is surjective, there exists $a\in G$ satisfying $\overline p ( \pi(a))=\alpha$. As $ \overline{p}(\pi(a^n))=1_{\coker\overline h}$, there exists $c\in G$ such that $\pi(a^n)=\overline{h}(\pi(c))=\pi(h(c))$, meaning that $a^n=h(c)t$ for some $t\in T$. 
    Let $m\in \N$ be such that $t^m=1$. Then, $a^{nm}=h(c^m)\in \im h$, implying that $p(a)\in \tors(\coker h)$. Moreover, by the commutativity of the above diagram, $\psi(p(a))=\alpha$. This proves the surjectivity of $\psi$ on the torsion part and concludes the proof.
\end{proof}

Now it is just a matter of putting all the ingredients  together.
\begin{proof}[Proof of \cref{mainteo-lower}]
    Let $f\colon M\to M$ be as in the statement and $f_*\colon \zhomol 1M\to \zhomol 1 M$ the isomorphism induced by $f$ on the first homology group of $M$. If $\mtorus{M}{f^k}$ is the mapping torus with fiber $M$ and monodromy $f^k$, then a Mayer-Vietoris sequence yields an  isomorphism $\zhomol 1{\mtorus M{f^k}}\cong \coker(f^k_*-\id)\oplus\mathbb{Z}$, and thus 
    \[
        |\tors(\zhomol 1{\mtorus M{f^k}})| = 
        |\tors(\coker(f^k_*-\id_G))|.
    \] 

    Let us denote by $G=\zhomol 1{M}$ the first homology group of $M$ and $T= \tors G$ the torsion subgroup of $G$. 
    As in \cref{torsion}, the map $f_*^k$ descends to a map $\overline f_*^k\colon G/T\to G/T$.
    Since the map $(f^k_*-\id_G)$ descends to the map $(\overline f_*^k-\id_{G/T})\colon G/T\to G/T$,
    by \cref{torsion}, it holds that 
    \[\left|\tors\left(\coker\left(\overline f_*^k-\id_{G/T}\right)\right)\right|\le \left|\tors\left(\coker\left(f^k_*-\id_{G}\right)\right)\right|.\]

    Since $G/T\cong\Z^d$ (for some $d\in\N$) is a finitely generated torsion-free abelian group, the map $\overline f_*$ can be represented by some matrix $A\in \SL(d,\Z)$.
    By \cref{torcoker} and \cref{logtor}, there exists a subsequence $\{k_h\}_h$ such that 
    \begin{align*}
        \left(\frac{n(n+1)\log(n+1)}2\right)^{-1}\cdot\fvz(f)
        \ge    &\limsup\limits_{h\to\infty}\frac{\log|\tors\zhomol 1{\mtorus M{f^{k_h}}}|}{k_h}\\
        =      &\limsup\limits_{h\to\infty}\frac{\log|\tors(\coker(f^{k_h}_*-\id_G))|}{k_h}\\
        \ge    &\lim\limits_{h\to\infty}\frac{\log|\tors(\coker(\overline f^{k_h}_*-\id_{G/T}))|}{k_k}\\
        =      &\lim\limits_{h\to\infty}\frac{\log|\tors(\coker(A^{k_h}-\id_{G/T}))|}{k_h}\\
        =      &\log |\widehat \det A|.
    \end{align*}
    
    The claim now follows by noticing that the matrix $A$ represents the map $\rhomol{1}{M}\to \rhomol 1M$ induced by $f$ on the first \emph{real} homology group. Indeed, 
    \[\rhomol 1M \cong G\otimes \mathbb{R}\cong (G/T\oplus T)\otimes \mathbb{R}\cong G/T\otimes \mathbb{R}\cong \mathbb{R}^d,\]
    and under these isomorphisms the map $f_*\otimes \id_\mathbb{R}$ (that is the map induced by $f$ on the first real homology group) corresponds to $\overline f_*\otimes \id_\mathbb{R}$, which can still be represented by $A$.
\end{proof}

Thanks to \cref{basic_ineq}, this also  concludes the proof of the lower bound of \cref{mainteo1}.

\section{Upper bound in dimension two}\label{upper2}
%!TeX root=./FVAnosov.tex
This section is devoted to proving the following theorem, which, combined with \cref{mainteo-lower}, completes the proof of \cref{mainteo1}.

\begin{teo}\label{higher-dim-upper}
    Fix $n\in \N$. There exists a constant $K_n>0$ such that the following holds: if $f\colon \T[n]\to \T[n]$ is an orientation-preserving homeomorphism on the $n$-torus  and  $f_*\colon \zhomol{1}{\T[n]}\to \zhomol{1}{\T[n]}$ denotes the induced map in homology, then
    \[\fvz(f)\le K_n\log_2(\rho(f_*)).\]
\end{teo}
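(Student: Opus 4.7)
The plan is to prove the bound in four steps. First, by Section \ref{sec-matrices} any orientation-preserving self-homeomorphism of $\T[n]$ is homotopic to the linear map $f_A$ induced by some $A = f_* \in \SL(n,\Z)$, and since $\fvz$ is a homotopy invariant one may assume $f = f_A$, so that $\rho(f_*) = \rho(A)$. Then I invoke Gelfand's formula (Proposition \ref{gelfand}): for any fixed $\varepsilon > 0$, choose $k_0 \in \N$ large enough that $\|A^{k_0}\|_\infty \le 2^{k_0(\log_2 \rho(A) + \varepsilon)}$. Writing $B := A^{k_0}$, the main task becomes constructing a chain $D_{k_0} \in C_{n+1}(\T[n];\Z)$ with $\partial D_{k_0} = (f_B)_*(c) - c$, where $c$ is a suitable fundamental cycle of $\T[n]$, and with $\|D_{k_0}\|_1 \le C_n \log_2 \|B\|_\infty$ for a constant $C_n$ depending only on $n$.

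Once such a $D_{k_0}$ is in hand, the bound propagates to general iterates by telescoping. Setting $k = m k_0$, the identity
\[(f_A^k)_*(c) - c = \sum_{i=0}^{m-1}(f_A^{ik_0})_*\bigl((f_B)_*(c) - c\bigr),\]
together with the fact that continuous pushforwards are $\ell^1$-nonincreasing, yields $\fnorm{(f_A^{k})_*(c) - c} \le m\|D_{k_0}\|_1$. Dividing by $k = mk_0$ and sending $m \to \infty$ gives
\[\fvz(f_A) \le \frac{\|D_{k_0}\|_1}{k_0} \le C_n(\log_2 \rho(A) + \varepsilon),\]
and then letting $\varepsilon \to 0$ shows that $K_n := C_n$ works.

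For the construction of $D_{k_0}$, I would interpolate between $c$ and $(f_B)_*(c)$ by a sequence of \emph{parallelogram-like} fundamental cycles $c = c_0, c_1, \ldots, c_M = (f_B)_*(c)$, where each $c_j$ is a cycle of $n!$ straight simplices tiling a fundamental parallelepiped for the $\Z^n$-action on $\R^n$, and consecutive cycles differ by an elementary move (a unit shear or a side halving/doubling of the parallelepiped). For each such move, the prism operator of Section \ref{sec-prism-operator} produces a filling $E_j$ of $c_{j+1} - c_j$ whose $\ell^1$-cost is bounded by a dimensional constant, because the parallelogram-like structure ensures that prisms over adjacent face-simplices glue consistently (a relative version of the property highlighted in the remark of Section \ref{sec-prism-operator}). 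Setting $D_{k_0} := \sum_j E_j$ gives $\|D_{k_0}\|_1 \le c_n M$. The final input is the bound $M = O(\log_2 \|B\|_\infty)$, obtained by factoring $B \in \SL(n,\Z)$ as a product of $O(\log_2 \|B\|_\infty)$ matrices of the form $I + k E_{ij}$, $k \in \Z$, via a quantitative Euclidean/Gaussian elimination on the entries of $B$.

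The main obstacle will be this third step: defining parallelogram-like cycles precisely, verifying that the prism operator yields a bounded-cost filling between any two consecutive cycles $c_j, c_{j+1}$, and explicitly producing the sequence of elementary moves taking the unit cube to the parallelepiped $B([0,1]^n)$. The crucial point is that Gelfand's formula converts the geometric bound $O(\log_2 \|A^{k_0}\|_\infty)$ on $\|D_{k_0}\|_1$ into $O(k_0 \log_2 \rho(A))$; any weaker factorization requiring $O(\|B\|_\infty)$ moves instead of $O(\log_2 \|B\|_\infty)$ would produce only a trivial bound, so the logarithmic matrix factorization is the indispensable quantitative ingredient.
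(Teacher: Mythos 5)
Your proposal follows the paper's overall strategy faithfully in its broad strokes: reduce to a linear map $f_A$, invoke Gelfand's formula to convert a bound in $\log_2\|A^j\|_\infty$ into one in $j\log_2\rho(A)$, and interpolate between parallelogram-like cycles using prism operators with a logarithmic count of elementary moves. The telescoping step $(f_A^k)_*(c)-c=\sum_{i}(f_A^{ik_0})_*((f_B)_*(c)-c)$ is correct (the paper in fact applies \cref{metteresuS1} directly to $A^j$ for each $j$, which is cleaner, but your version works too). The issue lies in the third step, and it is not a cosmetic one.

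You propose to factor $B\in\SL(n,\Z)$ as a product of $O(\log_2\|B\|_\infty)$ elementary matrices $I+kE_{ij}$ with $k\in\Z$, and to implement each factor as a move whose filling cost is ``bounded by a dimensional constant.'' These two requirements are in tension. If each move truly has $O(1)$ cost, then it must be a \emph{bounded} shear (e.g.\ $k\in\{0,\dots,3\}$ as in \cref{claim-dehn}), and $\Omega(\|B\|_\infty)$ such moves are needed to realise a single $I+kE_{ij}$ with $|k|$ large — killing the logarithmic count. Conversely, if you allow arbitrary $k$, the cost of a single shear is not a constant: the paper shows (\cref{step-slide-paral-along-a-side}, via \cref{sistemareS1}) that it is $O(\log|k|)$, and this estimate is itself the technical heart of the whole section, proved by a non-obvious interleaving of bounded Dehn twists (\cref{claim-dehn}) and a doubling/halving move (\cref{claim-double-and-halve}) on degenerate $2$-cycles in $S^1$. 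Even granting the $O(\log|k_j|)$ cost, your total is $\sum_j O(\log|k_j|)$; to bring this down to $O(\log\|B\|_\infty)$ (rather than $O((\log\|B\|_\infty)^2)$, which would not suffice — dividing by $j$ then diverges) requires a further quantitative bound on the factorization, and your proposal does not supply it and does not flag the problem. Note also that your interpolating chain is supposed to stay within fundamental parallelogram-like cycles, but the ``side halving/doubling'' move only makes sense for the degenerate null-homologous cycles $\hprism[1][2]{a,l}$ in $S^1$ used in \cref{sistemareS1}; in dimension $n\ge2$ it does not preserve integrality of the lattice, and the paper's argument necessarily passes through non-fundamental rectangle-like cycles $\hrect{a_1,\dots,a_n}$ representing $\prod a_i\cdot[\T[n]]_\Z$ and sums thereof (\cref{lemma-hprism-sum-rect}, \cref{lemma-rect-with-size1-hdim}, \cref{step-splitting-rect}), not only fundamental cycles.

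In short: the indispensable ingredient is not the matrix factorization itself but the dimension-one doubling/halving mechanism that makes a $k$-shear cost $O(\log|k|)$ and, combined with the splitting lemma (\cref{lemma-splitting-vector}) and the reduction to slim and rectangle-like cycles, gives \cref{metteresuS1} with a clean $O(\log\|A\|_\infty)$ bound. Your proposal names the right tools (prism operators, parallelogram-like cycles, Gelfand) but leaves the crucial counting argument — the one the paper spends Sections 4.2–4.5 establishing — as an unjustified assertion, and the assertion as stated is false.
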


To prove this theorem, we consider a fundamental cycle $c_n$ of $\T[n]$ and construct efficient fillings for $c_n-f^k_*(c_n)$ for any $k\in \N$.
In order to do that, we deal with a particular kind of fundamental cycles in $\T[n]$, which we refer as \emph{parallelogram-like} cycles and whose lifts in $\R^n$ consist in chains with supports given by Euclidean parallelograms.

In the next subsection we introduce these objects and present the key proposition (\cref{metteresuS1}) to prove \cref{higher-dim-upper}.

\subsection{Parallelogram and Rectangle-like cycles}\label{subsec-paral-rect}
Let $n\in \N$.
Given a vector $v\in \Z^n$, we consider the map $\widetilde{h}_v\colon \R^n\times [0,1]\to \R^n$ defined by $\widetilde{h}_v(\mathbf x,t)= \mathbf x+ tv$, which is a homotopy between the identity map of $\R^n$ and the translation by $v$.
Since $\widetilde{h}_v(\cdot, t)$ is a translation for every $t\in[0,1]$, the map $\widetilde h_v$ descends to a homotopy $h_v\colon \T[n]\times [0,1]\to \T[n]$ on the $n$-torus with $h_v(\cdot,0)=h_v(\cdot,1)=\id(\cdot)$.

Let
\[ \po v[\bullet]\colon \zchains \bullet{\T[n]}\to  \zchains {\bullet+1}{\T[n]}\]
be the prism operator associated to the homotopy $h_v$ (see \cref{sec-prism-operator}). Since $h_v$ is a homotopy between  $f=\id$ and $g=\id$, it holds that  $\partial \circ \po v - \po v\circ \partial=0$ and $\norm{\po v[j]}\leq j+1$.

The following lemma ensures that the operator $\po v$ preserves the space of boundaries $\zbounds \bullet{\T[n]}$ in $\T[n]$.
\begin{lemma}\label{lemma-prism-op-on-boundary}
    Let $b\in \zbounds j{\T[n]}$ be a boundary in $\T[n]$ and $v\in \Z^n$. Then $\po v(b)\in\zbounds{j+1}{\T[n]}$ is a boundary and  $\fnorm {\po v(b)}\leq (j+2)\fnorm b$.
\end{lemma}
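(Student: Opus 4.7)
The proof is essentially a direct consequence of two facts already noted in the excerpt: the chain-homotopy identity $\partial \circ \po v - \po v \circ \partial = 0$ (which holds because $h_v$ is a homotopy from the identity to itself) and the operator norm bound $\norm{\po v[k](c)}[1] \le (k+1)\norm{c}[1]$.

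My plan is to argue as follows. Since $b$ is a $j$-boundary on $\T[n]$ and the $\ell^1$-norm takes integer values on integral chains, the filling norm is realized: there exists $z \in \zchains{j+1}{\T[n]}$ with $\partial z = b$ and $\norm{z}[1] = \fnorm{b}$. Apply the prism operator $\po v[j+1]$ to $z$ and use the chain-homotopy relation to obtain
\[
\partial\bigl(\po v[j+1](z)\bigr) \;=\; \po v[j]\bigl(\partial z\bigr) \;=\; \po v[j](b).
\]
This already shows that $\po v[j](b)$ is a boundary in $\zbounds{j+1}{\T[n]}$, with $\po v[j+1](z)$ providing an explicit filling.

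To estimate the filling norm, I would simply invoke the stated operator norm bound on $\po v[j+1]$:
\[
\fnorm{\po v[j](b)} \;\le\; \norm{\po v[j+1](z)}[1] \;\le\; (j+2)\,\norm{z}[1] \;=\; (j+2)\,\fnorm{b},
\]
which is the claimed inequality.

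There is no genuine obstacle here; the lemma is a formal consequence of $\po v$ being a chain map (rather than just a chain homotopy up to a nontrivial difference $g_* - f_*$), which is precisely the payoff of restricting to translations by lattice vectors $v \in \Z^n$. The only minor point worth double-checking is the indexing of the operator norm bound: since $\po v[k]$ raises degree by one and has norm at most $k+1$, applying it in degree $j+1$ yields the factor $j+2$ appearing in the statement.
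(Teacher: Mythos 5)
Your proof is correct and follows the same route as the paper's: realize the filling norm by an explicit integral chain (using that the $\ell^1$-norm is $\mathbb{Z}$-valued), apply the prism operator, use the chain-map identity $\partial\circ \po v = \po v\circ\partial$, and invoke the operator-norm bound $\norm{\po v[j+1]}\le j+2$. No discrepancies to note.
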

\begin{proof}
    Let $d\in\zchains {j+1}{\T[n]}$ with $\partial d= b$ and $\norm d=\fnorm b$.
    Then 
    \[\partial (\po v(d)) = (\po v\circ \partial)(d)=\po v (b),\]
    and $\norm {\po v(d)}=(j+2)\norm d=(j+2)\fnorm b$.
\end{proof}

We are now ready to introduce \emph{parallelogram-like cycles}. They are defined inductively on the dimension, starting from dimension $1$.

\begin{defn}
    Let $\mathbf{0}\in\Z^n$ be the zero vector. The \emph{parallelogram-like $1$-cycle generated by the vector $v\in \Z^n$} in $\T[n]$  is the cycle given by
    \[\hprism[n][1]{v}\coloneqq [\mathbf{0},v]\in \zcycles{1}{\T[n]}.\]
    
    For $k>1$, having defined parallelogram-like cycles of dimension $k-1$, we define a \emph{parallelogram-like $k$-cycle $\hprism {v_1,\ldots,v_k}$ generated by the vectors $v_1,\ldots, v_k\in \Z^n$} in $\T[n]$ as the chain 
    \[\hprism {v_1,\ldots,v_k} \coloneqq \po{v_k}(\hprism[n][k-1] {v_1,\ldots,v_{k-1}})\in\zcycles{k}{\T[n]}. \]
    In this case, we call $\norm{v_1}[\infty],\ldots,\norm{v_k}[\infty]$ the \emph{sizes} of $\hprism {v_1,\ldots,v_k}$.
    
    A \emph{rectangle-like cycle with sizes $|a_1|,\ldots,|a_n|$}, 	where $a_1,\ldots, a_n\in \Z$, is a parallelogram-like cycle of the form
 	\[\hrect{a_1,\ldots,a_n}=\hprism[n][n]{a_1e_1,\ldots,a_ne_n}\in\zcycles{n}{\T[n]}.\] 
\end{defn}
 
\begin{es}
    By definition, parallelogram-like $1$-cycles are just given by straight simplices. 
    A parallelogram-like $1$-cycle of $\T[1]=S^1$ is, in particular, a rectangle-like cycle.

    An explicit expression for a $2$-dimensional parallelogram-like cycle in $\T[n]$ generated by $v,w\in \Z^n$ is given by
    \[\hprism[n][2] {v,w}\coloneqq [\mathbf 0, v,v+w] -[\mathbf{0},w,v+w]\in \zcycles2{\T[n]}\] 
    (see \cref{fig-paral-rect}).
\end{es}

\begin{center}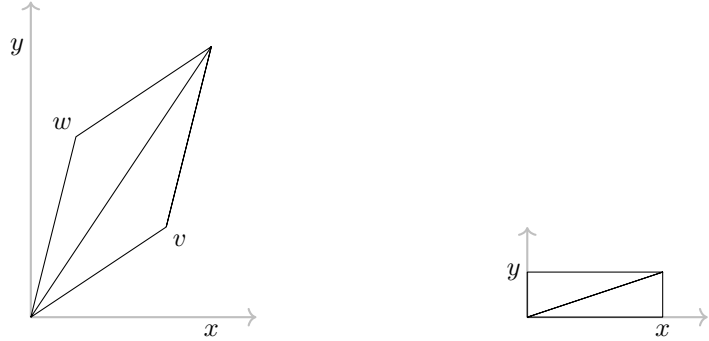
\begin{figure}[H]
	 %!TeX root=./../FVAnosov.tex
  
        \tikzmath{\x=3;}
        \tikzmath{\y=1;}
        \tikzmath{\s=2;}
        \tikzmath{\t=4;}
        \tikzmath{\h=1;}
        \begin{tikzpicture}[scale=0.6]
            \draw[->, thick, color =gray!50,] (0,0) -- (\x+\y+1,0);
            \draw[->, thick, color =gray!50,] (0,0) -- (0,\s+\t+1);
            \draw[line join=round] (\x,\s) -- (\x+\y,\s+\t) -- (\y,\t) -- (0,0) -- (\x+\y,\s+\t) -- (\x,\s) -- (0,0);
            \draw[line join=round] (\x+0.3,\s-0.3) node {$v$};
            \draw[line join=round] (\y-0.3,\t+0.3) node {$w$};
            \draw[line join=round] (-0.3,\s+\t) node {$y$};
            \draw[line join=round] (\x+\y,-0.3) node {$x$};
            \begin{scope}[shift={(\x+8,0)}]
                \draw[->, thick, color =gray!50,] (0,0) -- (\x+1,0);
                \draw[->, thick, color =gray!50,] (0,0) -- (0,\y+1);
                \draw[line join=round] (\x,0) -- (\x,\y) -- (0,\y) -- (0,0) -- (\x,\y) -- (0,0) -- (\x,0);
                \draw (-0.3,\y) node {$y$};
                \draw (\x,-0.3) node {$x$};
            \end{scope}
        \end{tikzpicture}
    \caption{
       Representations of the canonical lifts in $\R^2$ of the parallelogram-like cycle $\hprism[2][2]{v,w}$ in $\T$ (on the left) and of a rectangle-like cycle (on the right).
    }\label{fig-paral-rect}
\end{figure}\end{center}

The following lemma ensures that parallelogram-like cycles are, indeed, cycles with bounded norm.
\begin{lemma}\label{lemma-paral-are-cycle}
    Every parallelogram-like cycle $\hprism {v_1,\ldots,v_k}$ is a cycle with $\norm{\hprism {v_1,\ldots,v_k}}[1]\leq k!$.
\end{lemma}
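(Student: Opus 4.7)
The plan is a straightforward induction on $k$, relying on the two properties of the prism operators $\po v$ that were recorded in \cref{sec-prism-operator}: the chain-homotopy identity $\partial \circ \po v - \po v \circ \partial = (-1)^k(g_* - f_*)$, and the operator-norm bound $\norm{\po{v}[k-1](c)}[1] \le k\norm{c}[1]$ for $c \in \zchains{k-1}{\T[n]}$. The crucial observation is that both endpoints of the homotopy $h_v$ are the identity, so the right-hand side of the chain-homotopy identity vanishes identically; consequently $\po v$ commutes with $\partial$ and in particular sends cycles to cycles.

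For the base case $k=1$, the chain $\hprism[n][1]{v}=[\mathbf 0,v]$ is a single straight $1$-simplex, so $\norm{\hprism[n][1]{v}}[1]=1=1!$. Its boundary is $[v]-[\mathbf 0]$, and since $v\in\Z^n$ both points project to the same element of $\T[n]$ (see \cref{rmk: translation invariant}), so this boundary vanishes in $\zchains{0}{\T[n]}$.

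For the inductive step, assume that $\hprism[n][k-1]{v_1,\ldots,v_{k-1}}$ is a cycle with $\ell^1$-norm at most $(k-1)!$. By definition $\hprism[n][k]{v_1,\ldots,v_k}=\po{v_k}(\hprism[n][k-1]{v_1,\ldots,v_{k-1}})$, so the operator-norm bound immediately yields
\[
\norm{\hprism[n][k]{v_1,\ldots,v_k}}[1]\le k\cdot\norm{\hprism[n][k-1]{v_1,\ldots,v_{k-1}}}[1]\le k\cdot(k-1)!=k!.
\]
Applying the chain-homotopy identity to the homotopy $h_{v_k}$ from $\id$ to $\id$ gives $\partial\circ\po{v_k}=\po{v_k}\circ\partial$, and therefore
\[
\partial\hprism[n][k]{v_1,\ldots,v_k}=\po{v_k}\bigl(\partial\hprism[n][k-1]{v_1,\ldots,v_{k-1}}\bigr)=\po{v_k}(0)=0,
\]
completing the induction.

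I do not expect any real obstacle here: the lemma essentially unpacks the inductive definition of parallelogram-like cycles together with facts about $\po v$ that have already been established. The only point worth some care is the index bookkeeping—checking that $\po v$ acting on $(k-1)$-chains contributes a factor of at most $k$ (matching the stated bound $\norm{P_k}\le k+1$ on $k$-chains), so that the $k!$ rather than $(k+1)!$ bound is the correct one.
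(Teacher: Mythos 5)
Your proof is correct and follows essentially the same route as the paper: induction on $k$, using that $\po{v}$ commutes with $\partial$ (since the homotopy $h_v$ has both endpoints equal to the identity) and the operator-norm bound $\|\po{v}[k-1]\|\le k$. The only difference is that you spell out the base case in detail where the paper simply declares it obvious.
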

\begin{proof}
    The statement is clearly true for parallelogram-like cycles of dimension $1$. If we assume by induction that the statement is true for $k-1$, with $k>1$, then 
    \[
        \partial\hprism {v_1,\ldots,v_k}
        =\partial \po {v_k}(\hprism{v_1,\ldots, v_{k-1}})
        =\po {v_k}(\partial\hprism{v_1,\ldots, v_{k-1}})
        =0,
    \]
    and
    \[
        \norm{\hprism {v_1,\ldots,v_k}}[1]
        =\norm{\po {v_k}(\hprism{v_1,\ldots, v_{k-1}})}[1]
        \leq k\cdot\norm{\hprism{v_1,\ldots, v_{k-1}}}[1].
    \]
\end{proof}

As $\zhomol{n}{\T[n]}\cong \Z$, the previous lemma ensures that a parallelogram-like $n$-cycle in $\T[n]$ represents a multiple of the fundamental class in homology. The following lemma determines the multiple in terms of its defining vectors.

\begin{lemma}\label{paral-homology}
    The rectangle-like cycle $\hrect[n]{1,\ldots,1}$ is a fundamental cycle for $\T[n]$. More generally, if $v_1,\ldots,v_n\in \Z^n$, then the parallelogram-like cycle $\hprism[n][n]{v_1,\ldots,v_n}$ represents the element 
    \[\det A\cdot[\T[n]]_\Z\in \zhomol{n}{T^n},\]
    where $A$ is the matrix whose columns are given by $v_1,\ldots,v_n$.
\end{lemma}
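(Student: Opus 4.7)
The plan is to reduce the statement to the rectangle case $\hrect[n]{1,\ldots,1}$ via naturality with respect to linear endomorphisms of the torus. Let $A$ denote the integer matrix with columns $v_1,\ldots,v_n$; since $A$ preserves $\Z^n$, it induces a self-map $\phi_A\colon \T[n]\to\T[n]$ whose degree is $\det A$. As $\phi_A$ lifts to a linear map on $\R^n$, it intertwines the homotopies defining the prism operators: explicitly, $\phi_A\circ h_{e_i}=h_{v_i}\circ(\phi_A\times\id)$ on $\T[n]\times [0,1]$, and consequently
\[(\phi_A)_*\circ \po{e_i}=\po{v_i}\circ(\phi_A)_*\]
on singular chains. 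Combined with $(\phi_A)_*[\mathbf 0,e_1]=[\mathbf 0,v_1]$, iterating this relation for $i=2,\ldots,n$ yields
\[\hprism[n][n]{v_1,\ldots,v_n}=(\phi_A)_*(\hrect[n]{1,\ldots,1}).\]
Granting the first claim of the lemma, functoriality of the fundamental class then gives $[\hprism[n][n]{v_1,\ldots,v_n}]=\deg(\phi_A)\cdot [\T[n]]_\Z=\det A\cdot [\T[n]]_\Z$, which is the conclusion.

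It remains to verify that $\hrect[n]{1,\ldots,1}$ is itself a fundamental cycle of $\T[n]$. I would argue by induction on $n$: the base case $n=1$ is clear, since $[\mathbf 0,e_1]$ traverses $S^1$ exactly once. For the inductive step, I would track the construction on canonical lifts to $\R^n$. The chain $\hrect[n-1]{1,\ldots,1}$ lifts to a chain supported on $[0,1]^{n-1}\times \{0\}\subset \R^n$ which, by induction, simplicially subdivides this face of the cube. Applying the lifted prism operator along $e_n$, which at the level of model simplices is the chain $p_n$ subdividing $\Delta^{n-1}\times I$, produces a chain that simplicially subdivides the unit cube $[0,1]^n$ in a coherent way. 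Projecting to $\T[n]$ via $\pi_n$, the cube becomes the fundamental domain and the chain becomes a singular representative of $[\T[n]]_\Z$.

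The main technical obstacle is the orientation bookkeeping: one must verify that the paper's sign convention for the prism operator (which differs from the classical one by $(-1)^k$, as noted in \cref{sec-prism-operator}) is precisely the one that transports the standard orientation of $[0,1]^{n-1}$ to that of $[0,1]^n$ under sweeping by $e_n$. This is the real content of the earlier remark that the chosen convention makes $\po{e_n}$ carry parallelogram-like fundamental cycles to parallelogram-like fundamental cycles of one dimension higher; it is also what pins down the overall sign in the identification $[\hprism[n][n]{v_1,\ldots,v_n}]=\det A\cdot[\T[n]]_\Z$, ruling out an unwanted factor of $\pm 1$.
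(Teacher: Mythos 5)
Your proof is correct and follows essentially the same strategy as the paper's: reduce to the case of the rectangle-like cycle $\hrect[n]{1,\ldots,1}$ by pushing forward along the degree-$\det A$ self-map $\phi_A$ of the torus, then argue that $\hrect[n]{1,\ldots,1}$ is itself a fundamental cycle. The intertwining identity $(\phi_A)_*\circ \po{e_i}=\po{v_i}\circ(\phi_A)_*$, which you derive from $\phi_A\circ h_{e_i}=h_{v_i}\circ(\phi_A\times\id)$, is correct and gives the chain-level equality $\hprism[n][n]{v_1,\ldots,v_n}=(\phi_A)_*(\hrect[n]{1,\ldots,1})$ cleanly; the paper asserts the corresponding homology-class equality without spelling out this intermediate step, so your version is in fact more explicit there.

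Where you diverge is the base case. The paper dispatches $\hrect[n]{1,\ldots,1}$ in one line by observing that it is a cycle of local degree $1$ at every point of $\T[n]$ (using \cref{lemma-paral-are-cycle} to know it is a cycle at all). You instead propose an induction on $n$, tracking the canonical lift as a simplicial subdivision of $[0,1]^n$ built from that of $[0,1]^{n-1}$ by the prism $p_n$. Both routes are sound, but yours is more work and, as you yourself note, requires an orientation check that you defer: that the paper's modified sign convention for the prism operator (the extra $(-1)^k$ relative to the classical one) makes $\po{e_n}$ carry a positively oriented subdivision of $[0,1]^{n-1}$ to a positively oriented subdivision of $[0,1]^n$. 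The paper's local-degree assertion absorbs exactly this sign question without unpacking it, so your proposal is not weaker than the paper on rigor, but it also does not close the gap it identifies. If you want to complete your inductive route, the cleanest way is to compute the local degree of the canonical lift at an interior point of one simplex of the subdivision and check it equals $+1$ rather than $-1$; this amounts to a single determinant computation with the vertices of the leading straight simplex in $p_n$, and once done for the model simplex it propagates through the induction.
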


\begin{proof}
    Being a cycle of degree $1$ in every point of $\T[n]$, the parallelogram-like cycle $\hprism[n][n]{e_1,\ldots,e_n}$ is a fundamental cycle for $\T[n]$.
	
    If $A$ is the matrix whose columns are given by $v_1,\ldots,v_n$, then the degree of the map $f\colon T^n\to T^n$ induced by the matrix $A$ is $\det(A)$. Thus, 
    \[\det A\cdot[\T[n]]_\Z=[f_*(\hprism[n][n]{e_1,\ldots,e_n})]=[\hprism[n][n]{v_1,\ldots,v_n}]\in \zhomol{n}{T^n},\]
    as desired.
\end{proof}

A direct consequence of the previous lemma is the fact that for any $v_1,\ldots,v_n\in\Z^n$, the parallelogram-like cycle $\hprism[n][n]{v_1,\ldots,v_n}$ and the rectangle-like cycle $\hrect {\det(A),1,\ldots,1}$ are cobordant, where $A$ is the matrix with columns $v_1,\ldots,v_n$. 
The key step for the proof of \cref{higher-dim-upper} is to bound the filling norm of their difference, as stated in the following proposition.

\begin{restatable}{prop}{propupperbound}\label{metteresuS1}
    Let $n\in \N$. There exists a constant $K_n>0$ such that for any $v_1,\ldots,v_n\in\Z^n$ it holds that
    \[
        \fnorm{\hprism[n][n]{v_1,\ldots,v_n} - \hrect {\det(A),1,\ldots,1}}
        \leq K_n\log_2(\norm{A}[\infty])+K_n, \]
    where $A$ is a matrix whose columns are given by $v_1,\ldots,v_n$.
\end{restatable}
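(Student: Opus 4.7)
The plan is to construct an explicit filling of $\hprism[n][n]{v_1,\ldots,v_n}-\hrect{\det(A),1,\ldots,1}$ as a concatenation of ``elementary cobordisms'', each of bounded norm, with the number of elementary cobordisms growing logarithmically in $\norm{A}[\infty]$. The cornerstone is a chain-level \emph{additivity identity} for the prism operator: for any $v,w\in\Z^n$ and any chain $c$ in $\T[n]$, there exists a chain $D$ with
\[\po{v+w}(c)-\po{v}(c)-\po{w}(c)=\partial D,\qquad\norm{D}[1]\leq C_n\norm{c}[1],\]
where $C_n$ depends only on $n$. One produces $D$ from a canonical chain-level second-order homotopy comparing the homotopy $h_{v+w}$ with the concatenation $h_v \ast h_w$, both being loops of self-homeomorphisms of $\T[n]$ that agree in $\pi_1$ of the translation group; the bound on $\norm{D}[1]$ then follows from the explicit combinatorial prism formula.

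Combined with a \emph{degeneracy lemma}---the parallelogram-like cycle $\hprism[n][n]{v_1,\ldots,v_n}$ vanishes identically as a chain whenever two \emph{consecutive} arguments coincide (a consequence of the identity $\po{v}\circ\po{v}=0$ on chains, verifiable directly from the prism formula), and is a boundary of bounded norm whenever any two arguments agree (obtained from the consecutive case via chain-level antisymmetry moves)---the additivity identity ensures that each elementary column operation $v_i\leftarrow v_i+v_j$ modifies $\hprism[n][n]{v_1,\ldots,v_n}$ by a boundary whose norm is bounded by a constant depending only on $n$, and similarly for column permutations and sign flips.

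Once each elementary column operation has uniformly bounded filling cost, a Euclidean-algorithm-style reduction of $A$ using $O(n^2\log_2\norm{A}[\infty])$ elementary operations brings $A$ into its Smith normal form $\operatorname{diag}(d_1,\ldots,d_n)$ with $d_1\mid\cdots\mid d_n$ and $\prod_i d_i=\det(A)$, and the accumulated cobordism from $\hprism[n][n]{v_1,\ldots,v_n}$ to $\hrect{d_1,\ldots,d_n}$ has total norm $O(\log_2\norm{A}[\infty])$.

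The main obstacle is the final step: filling the difference $\hrect{d_1,\ldots,d_n}-\hrect{\det(A),1,\ldots,1}$. Since the Smith normal form is a $\GL(n,\Z)$-invariant, these two cycles are in general not $\GL(n,\Z)$-equivalent (for example, $\operatorname{diag}(2,2)$ is not $\GL(2,\Z)$-equivalent to $\operatorname{diag}(4,1)$), so this step cannot proceed by further column operations and requires a direct chain-level cobordism. My plan is to establish an iterated ``reshaping'' identity of the form
\[\hrect{a,b,1,\ldots,1}-\hrect{ab,1,1,\ldots,1}=\partial E,\qquad\norm{E}[1]\leq C'_n(\log_2 a+\log_2 b),\]
obtained by dyadically subdividing an $a\times b$ block into $b$ parallel strips that concatenate end-to-end under the toric identification, applying the additivity lemma at each dyadic scale. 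Iterated across the diagonal entries of the Smith normal form, this produces the final filling of total norm bounded by $K_n\log_2\norm{A}[\infty]+K_n$, as required.
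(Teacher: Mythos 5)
The proposal diverges from the paper's route---the paper never passes through Smith normal form, but instead orthogonally splits each generator $v_i=v_i^\parallel+v_i^\perp$ and recursively decomposes $\hprism[n][n]{v_1,\ldots,v_n}$ into at most $n!$ rectangle-like cycles whose sizes never exceed $\max_i\norm{v_i}[\infty]$ (Lemma \ref{lemma-hprism-sum-rect}), then reshapes each rectangle (Lemma \ref{lemma-rect-with-size1-hdim}) and recombines. Your additivity identity and degeneracy lemma correspond to Lemma \ref{lemma-splitting-vector} and Claim \ref*{lemma-zero-vector}/Lemma \ref{lemma-slim-parallelogram-hdim}, and your final ``reshaping'' step is essentially Lemma \ref{lemma-rect-with-size1-hdim}, so those pieces are sound in spirit.

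The gap is in the middle step. You claim that $O(n^2\log_2\norm{A}[\infty])$ elementary column operations of the form $v_i\leftarrow v_i+v_j$ (coefficient $\pm 1$), each with filling cost bounded by a constant $C_n$, bring $A$ to Smith normal form. This operation count is wrong for coefficient-$\pm 1$ operations: the subtraction-based Euclidean algorithm takes $\Theta(\norm A[\infty])$ steps in the worst case, not $O(\log\norm A[\infty])$. A concrete instance: reducing the column $v_2=Ne_1+e_2$ against $v_1=e_1$ to kill the $e_1$-entry requires $N$ applications of $v_2\leftarrow v_2-v_1$. To fix the count you must allow $v_i\leftarrow v_i+m\,v_j$ with $m$ equal to the Euclidean quotient, but then the operation no longer has constant filling cost: after applying additivity, you are left with a slim cycle $\hprism{\ldots,m\,v_j,\ldots,v_j,\ldots}$ whose generators are proportional but not equal, and the only available bound (as in Lemma \ref{lemma-slim-parallelogram-hdim}) grows like $\log(|m|\cdot\norm A[\infty])$, not like a constant. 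On top of this, integer Smith normal form computations suffer from intermediate coefficient swell---the intermediate matrix entries are not bounded by $\norm A[\infty]$ without a careful choice of pivoting strategy---so without controlling the sizes along the reduction path your accumulated cost could exceed $O(\log\norm A[\infty])$. The paper circumvents all of this by a decomposition that never enlarges the generating vectors beyond their initial $\ell^\infty$-norm; this size control is the crucial structural ingredient your reduction to Smith normal form is missing.
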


Recall that $c_n=\hrect{1,\ldots,1}$ is a fundamental cycle for $\T[n]$ (\cref{paral-homology}).
If $A\in \SL(n,\Z)$ and $f\colon \T[n]\to \T[n]$ is the map induced by $A$, then $P_k=f^k_*(c_n)$ is a parallelogram-like fundamental cycle. The previous proposition furnishes an estimate for $\fnorm{P_k-R}$, where $R$ is the rectangle-like cycle given by $R=\hrect {\det A,1,\ldots,1}=c_n$. This estimate leads to a proof of \cref{higher-dim-upper}.

\begin{proof}[Proof of \cref{higher-dim-upper} assuming \cref{metteresuS1}]
    As $\fvz$ is homotopy invariant, we can assume that $f\colon \T[n]\to \T[n]$ is a map induced by a matrix $A\in \SL(n,\Z)$. Let us denote by $\rho(A)$ the spectral radius of $A$.

    By the Gelfand formula for the spectral radius (\cref{gelfand}), one has
    \[\rho(A)=\lim\limits_{j\to \infty}\left(\norm{A^j}[\infty]\right)^{{1}/{j}}.\]
    For any $\epsilon>0$, there exists an $M>0$ (depending on $\epsilon$) such that for any $j>M$ it holds that 
    \[\norm{A^j}[\infty]\leq (1+\epsilon)^j\rho(A)^j.\]    

    Fix $c_n=\hrect[n]{e_1,\ldots,e_n}$ a fundamental cycle for $T^n$ (\cref{paral-homology}).
    For $j\in\N$ we set $v_i^j=A^je_i$, so that $f_*^j(c_n)=\hprism[n][n]{v^j_1,\ldots,v^j_n}$. As $A\in \SL(n,\Z)$ we have that 
    \[\hrect {\det(A^j),1,\ldots,1}=\hrect{1,\ldots,1}=c_n.\]
    
    By \cref{metteresuS1} and the definition of filling volume, one then obtains 
    \begin{align*}
        \fvz(f)
        =&\lim\limits_{j\to \infty}\frac{\fnorm{f^j_*(c_n)-c_n}}{j}\\
        =&\lim\limits_{j\to \infty}\frac{\fnorm{\hprism[n][n]{v^j_1,\ldots,v^j_n}-\hrect {\det(A^j),1,\ldots,1}}}{j}\\
        \leq& \lim_{j\to\infty}\frac{K_n\log_2(\norm{A^j}[\infty])+K_n}{j}\\
        \leq& \lim_{j\to\infty}\frac{K_n\log_2((1+\epsilon)^j\rho(A)^j)+K_n}{j}\\
        =&K_n \log_2 ((1+\epsilon)\rho(A)).
    \end{align*}
    The claim then follows from the arbitrariness of $\epsilon$.
\end{proof}

The rest of this section is devoted to the proof of \cref{metteresuS1}.
In order to understand the idea of the proof, we consider the graph whose vertices are given by integral cycles in $\T[n]$, and where two vertices are joined by an edge if and only if the corresponding cycles $a,b$ are cobordant and satisfy $\fnorm{a-b}=1$. In other words, this graph can be seen as a metric space whose distance function is induced by the filling norm. The final goal is to estimate the distance between $\hprism[n][n]{v_1,\ldots,v_n}$ and $\hrect {\det(A),1,\ldots,1}$. 

We proceed as follows:
\cref{sec-splitting} consists in proving that we can split a parallelogram-like cycle into two cycles without going too far in our graph. To be more precise, we prove that for any $v_1',v_1'',v_2,\ldots, v_k\in\Z^n$ the sum $\hprism{v_1',v_2,\ldots,v_k}+\hprism{v_1'',v_2,\ldots,v_k}$ is close to the parallelogram-like cycle $\hprism{v_1'+v_1'',v_2,\ldots,v_k}$ (\cref{lemma-splitting-vector}).

In \cref{sec-slim} we deal with ``slim'' parallelogram-like cycles, which are the parallelogram-like cycles whose generating vectors are linearly dependent. In particular, we  control the distance between these cycles and the zero cycle in terms of their sizes (\cref{lemma-slim-parallelogram-hdim}).

\cref{sec-rect} deals with rectangle-like cycles, proving that each of them is close to a rectangle-like cycle with all but one size equal to $1$ (\cref{lemma-rect-with-size1-hdim}).
Moreover, we show that any parallelogram-like cycle is close to a sum of rectangle-like cycles with a controlled number of summands (\cref{lemma-hprism-sum-rect}).

In \cref{sec-proof-metteresuS1} we put together all these results and get \cref{metteresuS1}.

\begin{oss}
    To simplify the notation, we denote all the constants appearing in the following lemmas by $C_n$, as they can be assumed to coincide up to taking the maximum. In the specific case in which we need to use the constant of a statement inside its proof (for example, when the proof is done by induction), then we will refer to such a constant as $V_n$.
    
    Here, the subscript $n$ in $C_n$ and $V_n$ denotes the dependence on the dimension. 
\end{oss}

We begin with the following lemma, which bounds the distance between two parallelogram-like cycles with the same generating vectors but arranged in different orders. 
\begin{lemma}\label{lemma-rearranging-vectors}
	Let $k,n\in \N$ be two positive integers. There exists a constant $C_k$, depending only on $k$, such that for every $v_1,\ldots, v_k\in \Z^n$ and every permutation $\theta$ on $\{1,\ldots,k\}$ it holds that  
	\[\fnorm{\hprism{v_1,\ldots,v_k}-\sgn{\theta}\cdot\hprism{v_{\theta(1)},\ldots,v_{\theta(k)}}}\leq C_k\]
    and
    \[\fnorm{\hprism{v_{1},\ldots,v_{k}}+\hprism{-v_1,v_2,\ldots,v_k}}\leq C_k.\]
\end{lemma}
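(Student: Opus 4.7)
The plan is to first establish an explicit closed-form expression for parallelogram-like cycles as a signed sum over the symmetric group; from this the first assertion will follow immediately (in fact with the difference being exactly zero), while the second will follow by applying the chain of prism operators to a bounded $1$-chain.

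The key step is to prove, by induction on $k$, the identity
\[
\hprism{v_1,\ldots,v_k} \;=\; \sum_{\tau \in S_k} \sgn(\tau)\cdot [\mathbf{0},\, v_{\tau(1)},\, v_{\tau(1)}+v_{\tau(2)},\,\ldots,\, v_{\tau(1)}+\cdots+v_{\tau(k)}]
\]
in $\zchains{k}{\T[n]}$. The base case $k=1$ is the definition. For the inductive step, one unwinds the prism operator using the explicit chain $p_k$ decomposing $\Delta^{k-1}\times I$, which gives
\[
\po{v_k}([p_0,\ldots,p_{k-1}]) \;=\; \sum_{i=0}^{k-1}(-1)^{i}[p_0,\ldots,p_{k-i-1},\, p_{k-i-1}+v_k,\ldots, p_{k-1}+v_k].
\]
Applied to the partial-sum simplex associated with $\sigma\in S_{k-1}$, the $i$-th summand is the partial-sum simplex of the permutation $\tau\in S_k$ obtained by inserting $k$ at position $k-i$ into the sequence $\sigma(1),\ldots,\sigma(k-1)$. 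A direct inversion count yields $\sgn(\tau) = (-1)^{i}\sgn(\sigma)$, and the map $(\sigma,i)\mapsto\tau$ is a bijection $S_{k-1}\times\{0,\ldots,k-1\}\to S_k$, so summing gives the desired formula.

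The first assertion is then immediate: substituting $\tau = \theta\sigma$ in the formula for $\hprism{v_{\theta(1)},\ldots,v_{\theta(k)}}$ produces $\sgn(\theta)\cdot\hprism{v_1,\ldots,v_k}$ as a chain, so the difference vanishes identically. For the second assertion I would exploit the recursion $\hprism{v_1,\ldots,v_k} = \po{v_k}\circ\cdots\circ\po{v_2}([\mathbf{0},v_1])$ to write
\[
\hprism{v_1,\ldots,v_k} + \hprism{-v_1,v_2,\ldots,v_k} \;=\; \po{v_k}\circ\cdots\circ\po{v_2}\bigl([\mathbf{0},v_1]+[\mathbf{0},-v_1]\bigr).
\]
By \cref{rmk: translation invariant} one has $[\mathbf{0},-v_1]=[v_1,\mathbf{0}]$ in $\zchains{1}{\T[n]}$, whence
\[
[\mathbf{0},v_1]+[\mathbf{0},-v_1] \;=\; \partial\bigl([\mathbf{0},v_1,\mathbf{0}] + [\mathbf{0},\mathbf{0},\mathbf{0}]\bigr)
\]
has filling norm at most $2$, independently of $v_1$. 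Iterating \cref{lemma-prism-op-on-boundary} yields the bound $2\cdot 3\cdot 4\cdots(k+1)=(k+1)!$, so $C_k:=(k+1)!$ suffices for both parts.

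The main technical point will be the inductive verification of the closed-form formula, where one must carefully track signs and set up the bijection $(\sigma,i)\leftrightarrow\tau$. Once this is done, both assertions reduce to short formal manipulations.
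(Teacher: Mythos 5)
Your proof is correct, and it takes a genuinely different route from the paper's. The paper argues by functoriality and finiteness: it works with the finitely many model cycles $\hprism[k][k]{e_{\sigma(1)},\ldots,e_{\sigma(k)}}$ and $\hprism[k][k]{-e_1,e_2,\ldots,e_k}$ in $\T[k]$, observes via \cref{paral-homology} that they are cobordant to $\pm\hprism[k][k]{e_1,\ldots,e_k}$, sets $C_k$ to be the maximum of the resulting filling norms over the finite index set, and then pushes forward through the map $\T[k]\to\T[n]$ sending $e_i\mapsto v_i$, which is norm-nonincreasing. That argument is short and avoids any sign bookkeeping, but gives no explicit constant.

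Your approach instead derives the closed-form expansion
\[
\hprism{v_1,\ldots,v_k}=\sum_{\tau\in S_k}\sgn(\tau)\,[\mathbf{0},\,v_{\tau(1)},\,v_{\tau(1)}+v_{\tau(2)},\,\ldots,\,v_{\tau(1)}+\cdots+v_{\tau(k)}],
\]
whose inductive verification via the bijection $(\sigma,i)\leftrightarrow\tau$ (insert $k$ at position $k-i$) and the sign count $\sgn(\tau)=(-1)^i\sgn(\sigma)$ is correct, using the fact that $h_{v_k}\circ([p_0,\ldots,p_{k-1}]\times\mathrm{Id})$ is the $\pi_n$-projection of an affine map and hence sends the affine pieces of $p_k$ to straight simplices. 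This buys you strictly more than the paper proves: for the first assertion the difference is identically zero on the chain level (not merely of bounded filling norm), and for the second you get the explicit constant $(k+1)!$ via \cref{rmk: translation invariant}, the verification $\partial([\mathbf{0},v_1,\mathbf{0}]+[\mathbf{0},\mathbf{0},\mathbf{0}])=[\mathbf{0},v_1]+[v_1,\mathbf{0}]$, and iterated application of \cref{lemma-prism-op-on-boundary}. As a bonus, the closed-form expansion also immediately recovers the bound $\norm{\hprism{v_1,\ldots,v_k}}[1]\le k!$ of \cref{lemma-paral-are-cycle}. The trade-off is that your argument requires more careful combinatorics, whereas the paper's functoriality argument is robust and transfers verbatim to other settings; both are valid.
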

Recall that the sign $\sgn \theta$ of a permutation $\theta\colon \{1,\ldots,k\}\to \{1,\ldots, k\}$ is defined to be $+1$ if the permutation is even and $-1$ if the permutation is odd.
\begin{proof}
	Denote by $e_1,\ldots,e_k$ the canonical basis for $\R^k$. Let us set  $c= \hprism[k][k]{e_1,\ldots,e_k}, c_-= \hprism[k][k]{-e_1,e_2,\ldots,e_k}$, and $c_\sigma= \hprism[k][k]{e_{\sigma(1)},\ldots,e_{\sigma(k)}}$ for any permutation $\sigma\colon \{1,\ldots,k\}\to \{1,\ldots, k\}$. 
    Let $A_\sigma$ (resp. $A_-$) be the matrix whose columns are given by ${e_{\sigma(1)},\ldots,e_{\sigma(k)}}$ (resp. $-e_1,e_2,\ldots,e_k$). Clearly, $\det A_-=-1$.
    Moreover, being $A_\sigma$ the permutation matrix corresponding to $\sigma$, we have $\det A=\sgn \sigma$.

	By \cref{paral-homology}, $c, -c_-$, and $\sgn \sigma \cdot c_\sigma$ represent the same class in homology. We set 
    \[C_k'=\max\big\{\fnorm{c-\sgn \sigma \cdot c_\sigma}\ \big|\ \sigma\in \mathrm{Sym}(\{1,\ldots,k\})\big\},\]
	and $C_k=\max\{C_k',\fnorm{c+c_-}\}$.

    Let $f\colon \T[k]\to\T[n]$ be the map induced by the linear map $\R^k\to \R^n$ sending $e_i$ to $v_i$ for any $i=1,\ldots, k$. 
    Let us fix a permutation $\theta$ on $\{1,\ldots,k\}$. 
    As $\hprism[n][k]{v_1,\ldots,v_k}=f_*(c), \hprism[n][k]{-v_1,v_2,\ldots,v_k}=f_*(c_-)$ and $\hprism{v_{\theta(1)},\ldots,v_{\theta(k)}}=f_*(c_\theta)$, it holds that
	\begin{align*}
	    \fnorm{\hprism{v_{\theta(1)},\ldots,v_{\theta(k)}}-\sgn{\theta}\cdot \hprism{v_1,\ldots,v_k}}	
        =\fnorm{f_*(c-\sgn \theta \cdot c_\theta)}\leq C_k,
	\end{align*}
    and 
    \[
        \fnorm{\hprism{v_{1},\ldots,v_{k}}+\hprism{-v_1,v_2,\ldots,v_k}}
        =\fnorm{f_*(c+  c_-)}
        \leq C_k.
    \]
\end{proof}

\subsection{Splitting of parallelogram-like cycles}\label{sec-splitting}
%In this subsection we prove that we can split a parallelogram-like cycle into two without going too far inside our graph. 
As mentioned above, a parallelogram-like cycle can be split into the sum of two parallelogram-like cycles as in the following lemma.
\begin{lemma}\label{lemma-splitting-vector}
    Let $k\in \N$. There exists a constant $C_k$ such that  for any $v_1,\ldots,v_k, v_i',v_i''\in \Z^n$ with $v_i=v_i'+v_i''\in \Z^n$ it holds that
    \[
        \fnorm{
            \hprism{v_1,\ldots,v_k}-\hprism{v_1,\ldots,v_i',\ldots,v_k}-\hprism{v_1,\ldots,v_i'',\ldots,v_k}
        }\leq C_k.
    \]
\end{lemma}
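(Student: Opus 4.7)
First, by applying \cref{lemma-rearranging-vectors} to each of the three parallelogram-like cycles in the statement—using a common permutation to move $v_i$, $v_i'$, and $v_i''$ to the last position—we reduce to the case $i = k$ at the cost of an additive constant depending only on $k$. Thus it suffices to show
\[
\fnorm{\hprism{v_1,\ldots,v_{k-1},v_k} - \hprism{v_1,\ldots,v_{k-1},v_k'} - \hprism{v_1,\ldots,v_{k-1},v_k''}} \le C_k
\]
whenever $v_k = v_k' + v_k''$.

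The strategy is to construct an explicit $(k+1)$-chain $Q \in \zchains{k+1}{\T[n]}$ whose boundary equals, up to a bounded error, the expression above, and whose $\ell^1$-norm is bounded by a constant depending only on $k$. Set $c \coloneqq \hprism[n][k-1]{v_1,\ldots,v_{k-1}}$, a cycle of norm at most $(k-1)!$ by \cref{lemma-paral-are-cycle}. Let $T \subset \R^2$ be the $2$-simplex with vertices $(0,0)$, $(1,0)$, $(1,1)$, and let $G \colon \T[n] \times T \to \T[n]$ be the map descending from $\R^n \times T \to \R^n$, $(x,(a,b)) \mapsto x + a v_k' + b v_k''$ (well defined on the quotient because $v_k', v_k'' \in \Z^n$). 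Fix once and for all a triangulation $p$ of $\Delta^{k-1} \times T$ into $(k+1)$-simplices, e.g.\ the Eilenberg--Zilber shuffle, which uses $\binom{k+1}{2}$ simplices. For every singular $(k-1)$-simplex $\sigma$ appearing in $c$, define $Q(\sigma) \coloneqq (G \circ (\sigma \times \id_T))_*(p)$, and extend by linearity to obtain $Q(c)$ with $\norm{Q(c)}[1] \le \binom{k+1}{2} (k-1)!$.

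Now I compute $\partial Q(c)$. By the Leibniz rule, $\partial p$ splits as a triangulation of $\partial \Delta^{k-1} \times T$ plus $(-1)^{k-1}$ times a triangulation of $\Delta^{k-1} \times \partial T$. The first part contributes $0$ after summation, since the construction is linear in $\sigma$ and factors through face restrictions, so its total is an analogous operator applied to $\partial c = 0$. The boundary $\partial T$ consists of three oriented edges: along the edge from $(0,0)$ to $(1,0)$ the map $G$ realizes the homotopy $h_{v_k'}$; along the edge from $(1,0)$ to $(1,1)$ it realizes $h_{v_k''}$ (here one uses that $v_k' \in \Z^n$ makes $x + v_k' + t v_k''$ project to $x + t v_k''$ on $\T[n]$); and along the edge from $(0,0)$ to $(1,1)$ it realizes $h_{v_k}$, since $v_k' + v_k'' = v_k$. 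Hence, provided the triangulation $p$ restricts on each edge of $\partial T$ to the canonical prism triangulation $p_k$ of \cref{sec-prism-operator}, these three contributions equal (up to sign) $P_{v_k'}(c) = \hprism{v_1,\ldots,v_{k-1},v_k'}$, $P_{v_k''}(c) = \hprism{v_1,\ldots,v_{k-1},v_k''}$, and $P_{v_k}(c) = \hprism{v_1,\ldots,v_k}$, which—accounting for the signs in $\partial T = [AB] + [BC] - [AC]$—assemble exactly into the desired alternating sum.

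The main technical obstacle is the compatibility between $p$ and the prism triangulation $p_k$ on each edge of $\partial T$. If one cannot arrange an exact match, the discrepancy on each edge is a $k$-chain supported in $\Delta^{k-1} \times I \subset \R^{k+1}$ with vanishing boundary, hence a $k$-cycle that is a boundary in the contractible space $\R^{k+1}$ with filling norm bounded by a constant depending only on $k$; pushing such a filling forward via $G$ (which is norm non-increasing) and summing over the simplices of $c$ introduces only a constant error to the total filling. Putting everything together yields the bound with a constant $C_k$ that is independent of $n$ and of the vectors $v_1,\ldots,v_k, v_k', v_k''$.
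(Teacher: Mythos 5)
Your proposal is correct in outline and takes a genuinely different route from the paper. The paper's proof reduces to the case $i=k$ in the same way, but then embeds everything into $\R^{n+1}$: it lifts $\hprism[n+1]{\iota(v_1),\ldots,\iota(v_{k-1}),\iota(v_k')+e_{n+1}}$ canonically, applies a prism operator for a linear shearing homotopy in $\R^{n+1}$, and projects back to $\T[n]$. That computation produces an extra term $\hprism{v_1,\ldots,v_{k-1},\mathbf 0}$, which is then handled by a separate lemma (\cref{lemma-zero-vector}). You instead build a two-parameter analogue of the prism operator, fibering over the $2$-simplex $T$ with vertices $(0,0),(1,0),(1,1)$ and using the affine map $G(x,(a,b))=x+av_k'+bv_k''$, whose three edges realize exactly the three homotopies $h_{v_k'}$, $h_{v_k''}$, $h_{v_k}$. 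This is cleaner: the lateral terms vanish because $\partial\hprism[n][k-1]{v_1,\ldots,v_{k-1}}=0$, and the three desired parallelogram-like cycles appear directly in $\partial Q(c)$ with no residual term, so \cref{lemma-zero-vector} is not needed. Your constant $\binom{k+1}{2}(k-1)!=\frac{(k+1)!}{2}$ is also slightly smaller than the paper's $(k+1)!$. The one imprecise point is your fallback: if the restriction of the shuffle triangulation of $\Delta^{k-1}\times T$ to an edge of $T$ differs from the paper's $p_k$ by a sign (which it can, since the paper's $p_k$ deviates from the classical convention by $(-1)^{k-1}$), the ``discrepancy'' is not a $k$-cycle (e.g.\ it could be $2p_k$, whose boundary is nonzero), so the contractibility argument does not apply as stated. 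Fortunately you don't actually need the fallback: the Eilenberg--Zilber shuffle map is a natural chain map, so its restriction to each face $\Delta^{k-1}\times(\text{edge})$ is the shuffle of $\Delta^{k-1}\times\Delta^1$ with a consistent sign depending only on $k$, and since the same sign appears on all three edges, the resulting filling differs from the desired one only by a global sign, which is irrelevant for the filling norm.
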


Before proving this statement, we need the following claim, which bounds the filling norm of a parallelogram-like cycle that has $\mathbf{0}$ among its generating vectors.

\begin{claim}\label{lemma-zero-vector}
    There exists a constant $C_k$, depending on $k\in\N$, such that for any $v_1,\ldots,v_{k}\in \Z^n$ it holds that
    \[
        \fnorm{
            \hprism[n][k+1]{v_1,\ldots,v_{k},\mathbf 0}
        }\leq C_k.
    \]
\end{claim}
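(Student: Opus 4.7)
The plan is to prove the claim by induction on $k$. For the base case $k=0$, the cycle $\hprism[n][1]{\mathbf{0}}=[\mathbf{0},\mathbf{0}]$ is the constant $1$-simplex at the origin of $\T[n]$; it is the boundary of the constant $2$-simplex $[\mathbf{0},\mathbf{0},\mathbf{0}]$, so the choice $C_0=1$ works.

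For the inductive step, suppose the claim holds for $k-1$. The strategy is to move the $\mathbf{0}$ from the last position to the second-to-last position at bounded cost, so that the outermost prism operator acts on a $k$-dimensional parallelogram-like cycle whose filling norm is already controlled by the inductive hypothesis. Since the transposition of the last two vectors is odd, \cref{lemma-rearranging-vectors} produces a constant $V_{k+1}$ depending only on $k$ such that
\[
\fnorm{\hprism[n][k+1]{v_1,\ldots,v_k,\mathbf{0}}+\hprism[n][k+1]{v_1,\ldots,v_{k-1},\mathbf{0},v_k}}\leq V_{k+1}.
\]
By the inductive definition of parallelogram-like cycles,
\[
\hprism[n][k+1]{v_1,\ldots,v_{k-1},\mathbf{0},v_k}=\po{v_k}\bigl(\hprism[n][k]{v_1,\ldots,v_{k-1},\mathbf{0}}\bigr),
\]
and the inductive hypothesis guarantees that $\hprism[n][k]{v_1,\ldots,v_{k-1},\mathbf{0}}$ is a boundary with filling norm at most $C_{k-1}$. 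Applying \cref{lemma-prism-op-on-boundary} yields
\[
\fnorm{\hprism[n][k+1]{v_1,\ldots,v_{k-1},\mathbf{0},v_k}}\leq (k+2)\,C_{k-1},
\]
and the triangle inequality for the filling norm then gives
\[
\fnorm{\hprism[n][k+1]{v_1,\ldots,v_k,\mathbf{0}}}\leq V_{k+1}+(k+2)\,C_{k-1},
\]
so the induction closes with $C_k:=V_{k+1}+(k+2)\,C_{k-1}$.

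The one thing that needs careful checking is the sign produced by \cref{lemma-rearranging-vectors} when the last two entries are swapped: the transposition has signature $-1$, so the general bound on $\hprism{\cdot}-\sgn(\theta)\hprism{\theta(\cdot)}$ becomes a bound on the \emph{sum} of the two cycles, which is precisely the form required by the triangle inequality. Once this sign verification is done, the argument chains together only results already recorded in the preliminaries and in \cref{subsec-paral-rect}, and no substantive obstacle is expected.
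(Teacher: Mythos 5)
Your proof is correct, but it follows a genuinely different path from the paper's. You proceed by induction on $k$, using \cref{lemma-rearranging-vectors} to move $\mathbf 0$ out of the last slot (at a cost bounded by the rearranging constant for $k+1$ vectors), then applying the prism-operator bound of \cref{lemma-prism-op-on-boundary} to reduce to the inductive hypothesis, closing the recursion $C_k = V_{k+1} + (k+2)C_{k-1}$ with the degenerate base case $\hprism[n][1]{\mathbf 0}=[\mathbf 0,\mathbf 0]$. Your sign check on the transposition is right, and the triangle inequality step $\fnorm{A}\le\fnorm{A+B}+\fnorm{B}$ is valid. The paper instead uses a one-step universal-model argument: the specific cycle $q_k=\hprism[k][k+1]{e_1,\ldots,e_k,\mathbf 0}$ in $\T[k]$ is automatically a boundary because $\zhomol{k+1}{\T[k]}=0$, and every cycle of the form $\hprism[n][k+1]{v_1,\ldots,v_k,\mathbf 0}$ is the pushforward of $q_k$ under the linear map $\T[k]\to\T[n]$ sending $e_i\mapsto v_i$; since pushforwards are norm non-increasing, $C_k=\fnorm{q_k}$ works. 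The paper's route is shorter and does not invoke \cref{lemma-rearranging-vectors} at all, producing a cleaner logical dependency graph; your route is more hands-on and makes explicit how the constant grows with $k$, but at the price of a recursion through two auxiliary lemmas. Both are valid given the tools already available at that point in the paper.
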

\begin{proof}
    We denote by $\{e_1,\ldots, e_{k}\}$ the canonical basis of $\R^{k}$. 
    The parallelogram-like cycle $q_k=\hprism[k][k+1]{e_1,\ldots, e_{k},\mathbf 0}$ represents the trivial class in $\zhomol{k+1}{\T[k]}=0$.
    Set $C_k=\fnorm{q_k}$ and consider the map $\nu\colon \T[k]\to\T[n]$ induced by the linear map $\R^k\to\R^n$ sending $e_i$ to $v_i$ for every $i=1,\ldots,k$. Then, we obtain
    \[\fnorm{\hprism[n][k+1]{v_1,\ldots,v_{k},\mathbf 0}}= \fnorm{\nu_*(q_k)}\leq C_k. \]
\end{proof}

We also need to fix a canonical way to lift parallelogram-like cycles to $\R^n$; we do it as follows. 
If $c=\sum a_i\sigma_i\in \zchains k {\R^n}$ is written in reduced form, then the support of $c$ is 
\[\supp c= \bigcup_i \im \sigma_i.\]
With this notation, a lift $P\in\zchains k {\R^n}$ of $\hprism{v_1,\ldots,v_k}$ to $\R^n$ is \emph{canonical} if 
\[\supp P\subset \left\{\sum_{i=1}^k \epsilon_iv_i\ | \ \epsilon_i\in [0,1]\right\}, \] 
i.e., if the support of $P$ is contained in the Euclidean parallelogram with coordinate vectors $v_1,\ldots,v_k$.
Such a lift always exists and is unique.

\begin{oss}\label{rmk-boundary-lift-prism}
    The boundary of the canonical lift $P$ of a parallelogram-like cycle $\hprism{v_1,\ldots,v_k}$  is a sum 
    \[\partial P=\sum_{i=1}^k (-1)^{i}(b_i^+-b_i^-),\]
    where for every $i=1,\ldots, k$ one has that
    \begin{itemize}
        \item $b_i^+\in\zchains{k-1}{\R^n}$ is the canonical lift of the parallelogram-like cycle $\hprism{v_1,\ldots,\widehat{v}_i,\ldots, v_k}$;
        \item $b_i^-=(\tau_{v_i})_*(b_i^+)$, where $\tau_{v_i}\colon \R^n\to \R^n$ is the translation by $v_i$ in $\R^n$.
    \end{itemize}
    In particular, $(\pi_n)_*(b_i^+-b_i^-)=0$, where $\pi_n\colon \R^n\to \T[n]$ denotes the quotient map.
\end{oss}

\begin{proof}[Proof of \cref{lemma-splitting-vector}]
    Let $\pi\colon \R^n\to \T[n]$ be the quotient map and denote by $\{e_1,\ldots, e_{n+1}\}$ the canonical basis of $\R^{n+1}$. 
    Up to applying \cref{lemma-rearranging-vectors}, we can assume $i=k$. Let  $v_1,\ldots, v_k,v_k',v_k'' \in \Z^n$ be as in the statement. 

     We are going to define a chain $Q\in \zchains {k+1}{\T[n]}$ (represented in \cref{fig-slide-paral}) with $\norm {Q}[1]\leq (k+1)!$  and
	\begin{multline*}
		\partial Q=(-1)^k\big(-\hprism{v_i,\ldots,v_{k-1},v_k'}-\hprism{v_1,\ldots,v_{k-1},v_k''}+\\
        +\hprism{v_i,\ldots,v_{k-1},v_k'+v_k''}+\hprism{v_1,\ldots,v_{k-1},\mathbf 0}\big).
	\end{multline*}

    Let $\iota\colon \R^n\to \R^{n+1}$ be the inclusion into the first $n$ factors and  $p\colon \R^{n+1}\to \R^{n}$ be the projection onto the first $n$ factors. Set $w_k'=\iota(v_k')+e_{n+1}$.

    We denote by $P$ the canonical lift of the parallelogram-like cycle $\hprism[n+1]{\iota(v_1),\ldots,\iota(v_{k-1}), w_k'}$ and by $P'$ the one of $\hprism[n+1]{\iota(v_1),\ldots,\iota(v_{k-1}),w_{k}'+\iota(v_k'')}$.

    Consider the prism operator $R$ associated to the homotopy $f\colon\R^{n+1}\times [0,1]\to \R^{n+1}$ that at time $t$ is represented by the matrix
    \[
        f_t=f(\cdot,t)=
        \begin{pmatrix}
            I_n     &t \cdot v_k''\\
            0       &1
        \end{pmatrix}.
    \]
    We set 
    \[ Q = \pi_*(p_*(R(P))).\]

    Since $f_0=\id$, $f_1(\iota(v_i))=\iota(v_i)$ for $i=1,\ldots,k-1$ and $f_1(w'_k)=w'_k+\iota(v''_k)$ we have that $(f_0)_*(P)=P$ and $(f_1)_*(P)=P'$. By the properties of the prism operator  (\cref{sec-prism-operator}) it holds that
    \begin{align*}
        \partial Q
        &=\pi_*(p_*(\partial(R(P))))\\
        &=\pi_*(p_*(R(\partial P)))+(-1)^k(\pi_*(p_*(P'))-\pi_*(p_*(P)))\\
        &= \pi_*(p_*(R(\partial P)))+(-1)^k(\hprism{v_i,\ldots,v_{k-1},v_k'+v_k''}-\hprism{v_i,\ldots,v_{k-1},v_k'}).
    \end{align*}

    By \cref{rmk-boundary-lift-prism} we have that $\partial P = \sum_{i=1}^{k}(-1)^i(b_i^+-b_i^-)$, where $b_k^-=(\tau_{w_k'})_*(b_k^+)$ and $b_i^-=(\tau_{\iota(v_i)})_*(b_i^+)$ for every $i=i,\ldots,k-1$ (with $\tau_v\colon\R^{n+1}\to\R^{n+1}$ denoting the translation by $v$). 
    Since for any $i=1,\ldots,k-1$ the translation $\tau_{\iota(v_i)}$ commutes with $f_t$, the induced map $(\tau_{\iota(v_i)})_*$ on the chain complex  commutes with the operator  $R$ and thus
    \[
        \pi_*\big(p_*(R(b_i^-))\big)=\pi_*\big(p_*(R((\tau_{\iota(v_i)})_*(b_i^+)))\big)=
        \pi_*(p_*((\tau_{v_i})_*(R(b_i^+))))=\pi_*(p_*(R(b_i^+))).
    \]
	Thus, all terms in the sum $\pi_*(p_*(R(\partial P)))= \sum_{i=1}^{k}(-1)^i\pi_*(p_*(R(b_i^+-b_i^-)))$ vanish except possibly the one corresponding to the index $i=k$.
    In this case, since $b_k^+$ is the canonical lift of $\hprism[n+1][k-1]{\iota(v_1),\ldots,\iota(v_{k-1})}$, we have that $(f_t)_*(b_k^+)=b_k^+$. In particular, the homotopy $f$ acts on $b_k^+$ as $h_\mathbf{0}$ (defined in \cref{subsec-paral-rect}). Thus, 
    \[ 
        \pi_*(p_*(R(b_k^+)))=\po{\mathbf{0}}(\pi_*(p_*(b_k^+)))=
        \hprism{v_1,\ldots,v_{k-1},\mathbf 0}.
    \]

    Finally, since $b_k^-=(\tau_{w_k'})_*(b_k^+)$ is contained in the affine $(n-1)$-subspace $\mathcal H= \iota(\R^n)+w'_k$ and $f_t|_{\mathcal H}=\tau_{t\cdot\iota(v_k'')}|_{\mathcal{H}}$, we have that $(f_t)_*(b_k^-)=(\tau_{t\cdot\iota(v_k'')})_*(b_k^-)=(h_{t\cdot\iota(v_k'')})_*(b_k^-)$, getting
    \[ 
        \pi_*(p_*(R(b_k^-)))=\po{v_k''}(\hprism[n][k-1]{v_1,\ldots,v_{k-1}})=\hprism{v_1,\ldots,v_{k-1},v_k''}.
    \]

    Putting everything together, we get
    \begin{align*}
        \pi_*(p_*(R(\partial P)))
        = (-1)^k(\hprism{v_1,\ldots,v_{k-1},\mathbf 0}-\hprism{v_1,\ldots,v_{k-1},v_k''})
    \end{align*}
	and, therefore, 
	\begin{multline*}
		\partial Q=+(-1)^k\big(-\hprism{v_i,\ldots,v_{k-1},v_k'}-\hprism{v_1,\ldots,v_{k-1},v_k''}+\\
        +\hprism{v_i,\ldots,v_{k-1},v_k'+v_k''}+\hprism{v_1,\ldots,v_{k-1},\mathbf 0}\big).
	\end{multline*}
    
    As $v_k=v_k'+v_k''$, it follows that
    \begin{multline*}
        \fnorm{\hprism{v_1,\ldots,v_{k-1},v_k}-\hprism{v_1,\ldots,v_{k-1},v_k''}-\hprism{v_i,\ldots,v_{k-1},v_k'}}\\
        \leq \fnorm{\partial Q}+\fnorm{\hprism{v_1,\ldots,v_{k-1},\mathbf 0}}.
    \end{multline*}
    The desired result then follows from the fact that $\fnorm{\hprism{v_1,\ldots,v_{k-1},\mathbf 0}}\leq C_{k-1}$, by \cref{lemma-zero-vector}, and
    \begin{multline*}
        \fnorm{\partial Q}
        \leq\norm Q[1] 
        = \norm{\pi_*(p_*(R(P)))}[1]
        \leq (k+1)\norm{\hprism[n+1]{\iota(v_1),\ldots,\iota(v_{k-1}), w_k'}}[1]
        \leq (k+1)!,
    \end{multline*}
    by \cref{lemma-paral-are-cycle}. 
\end{proof}

\begin{center}\begin{figure}[H]
    \begin{minipage}{0.4\textwidth}
	     %!TeX root=./../FVAnosov.tex
\centering
\tikzmath{\vx=2;}
\tikzmath{\vy=0;}
\tikzmath{\wx=-1;}
\tikzmath{\wy=5;}
\tikzmath{\ux=\wx+4;}
\tikzmath{\uy=\wy-2;}
\tikzmath{\h=1;}
\begin{tikzpicture}[scale=0.6]
	\draw[line join=round,dashed] (\vx,\vy) -- (\vx+\wx,\vy+\wy);
	\draw[line join=round,dashed] (0,0) -- (\vx+\wx,\vy+\wy);
	\draw[line join=round] (\wx,\wy) -- (\vx+\wx,\vy+\wy);
	\draw[line join=round] (\vx+\ux,\vy+\uy,\h) -- (\vx+\wx,\vy+\wy) -- (\ux,\uy,\h);
	\draw[line join=round] (\vx,\vy) -- (\vx+\ux,\vy+\uy,\h) -- (\ux,\uy,\h) -- (0,0) -- (\vx,\vy);
	\draw[line join=round] (\vx+\ux,\vy+\uy,\h) -- (0,0);
	\draw (0,0) -- (\wx,\wy) -- (\ux,\uy,\h);
	\draw[line join=round, red, thick] (\vx,\vy) -- (0,0);
	\draw (0,0) -- (\wx,\wy) -- (\ux,\uy,\h);
	\draw[line join=round] (\vx,\vy-0.4) node {{\small$v_1$}};
	\draw[line join=round] (\wx-0.3,\wy+0.3) node {\small$v_2'+v_2''$};
	\node [fill =white, inner sep=0pt, line join=round=2pt] at (\ux-0.7,\uy-0.1,\h) {\small$v_2'$};
	\draw[line join=round] (\ux+\vx+0.8,\vy+\uy,\h) node {\small$v_1+v_2'$};
	\draw[line join=round] (\wx+\vx+0.5,\vy+\wy+0.5) node {\small$v_1+v_2'+v_2''$};
\end{tikzpicture}
    \end{minipage}
    \begin{minipage}{0.4\textwidth}
	     %!TeX root=./../FVAnosov.tex

\centering
\tikzmath{\z=5;}
\tikzmath{\x=2;}
\tikzmath{\y=6;}%è quello che nello step ho chiamato x+y$
\tikzmath{\p=1;}
\begin{tikzpicture}[scale=0.5]
	%base
	\draw[line join=round, dashed]  (0,\y,0) -- (0,0,0) -- (\z,\y,0) -- (\z,0,0);
	\draw[] (0,0,0) -- (\z,0,0) -- (\z,\y,0) --(0,\y,0);
	%rilievo
	\draw[line join=round,] (0,0,0) -- (0,\x,\p) -- (0,\y,0)   -- (\z,\y,0) -- (\z,\x,\p) -- (\z,0,0) -- cycle;
	\draw[line join=round] (0,0,0) -- (\z,\x,\p) -- (0,\x,\p) -- (\z,\y,0);
	\draw[red, thick] (0,0,0) -- (\z,0,0);
	%axis name
%			\begin{scope}[shift={(\y+4,0)}]
%				\draw[->, thick, color =gray!20] (0,0) -- (\z+1.5,0);
%				\draw[->, thick, color =gray!20] (0,0) -- (0,\y+1);
%				\draw (0,0) -- (0,\y) -- (\z,\y) -- (\z,0) -- cycle;
%				\draw (0,\x) --(\z,\x) -- (0,\y);
%				\draw (0,\x) -- (\x,0) -- (\x,\x) -- (\z,0);
%				\draw[gray!50] (-0.2,\y +1.1) node {$y$};
%				\draw[gray!50] (\z+1.1,-0.2) node {$x$};
%				\draw (\x,-0.3) node {$1$};
%				\draw (-0.3,\x) node {$1$};
%				\draw (-0.3,\y) node {$d$};
%				\draw (\z,-0.3) node {$a-b$};
%			\end{scope}
\end{tikzpicture}
    \end{minipage}
	\caption{
		On the left, a ($3$-dimensional) representation of a lift in $\R^3$ of the $3$-chain $Q$ of \cref{lemma-splitting-vector}. The boundary of this lift is made up of: the two forward parallelograms, representing lifts of $\hprism[2][2]{v_1,v_2'}$ and $\hprism[2][2]{v_1,v_2''}$, the backward parallelogram, representing a lift of $\hprism[2][2]{v_1,v_2'+v_2''}$, and the two triangles on the left and on the right, given by $R(b_1^+)$ and $R(b_1^-)$, which simplify when projecting on $\T$. The red segment is the support of $\hprism[2][2]{v_1,\mathbf 0}$.\\
       On the right, it is represented the same picture in case of rectangle-like cycles (\cref{step-splitting-rect}).
	}\label{fig-slide-paral}
\end{figure}
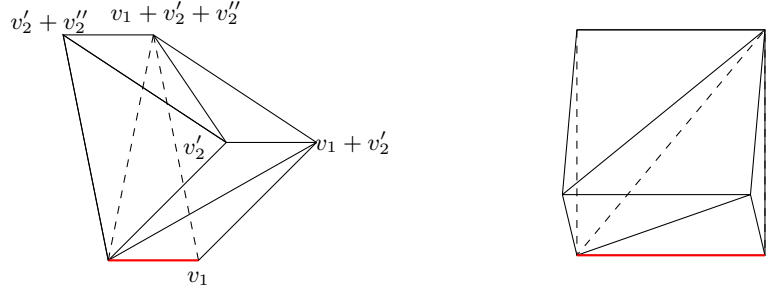\end{center}

The following is a direct application of the previous lemma in the particular case in which parallelogram-like cycles are rectangle-like.
\begin{lemma}\label{step-splitting-rect}
        Let $n\in\N$. There exists a constant $C_n$ such that  for any $a_1,\ldots,a_n, a_i',a_i''\in \Z$ with $a_i=a_i'+a_i''$ it holds that
    \[
        \fnorm{
            \hrect{a_1,\ldots,a_n}-\hrect{a_1,\ldots,a_i',\ldots,a_n}-\hrect{a_1,\ldots,a_i'',\ldots,a_n}
        }\leq C_n.
    \]
\end{lemma}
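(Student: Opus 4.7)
The plan is to deduce this statement as an immediate specialization of \cref{lemma-splitting-vector}. By the definition of rectangle-like cycles, we have $\hrect{a_1,\ldots,a_n}=\hprism[n][n]{a_1e_1,\ldots,a_ne_n}$, and similarly for the two summands on the right-hand side. So I would set $v_j\coloneqq a_je_j$ for every $j\in\{1,\ldots,n\}\setminus\{i\}$, and $v_i\coloneqq a_ie_i$, $v_i'\coloneqq a_i'e_i$, $v_i''\coloneqq a_i''e_i$. Because $a_i=a_i'+a_i''$, we have $v_i=v_i'+v_i''$, and the three rectangle-like cycles in the statement coincide respectively with $\hprism[n][n]{v_1,\ldots,v_n}$, $\hprism[n][n]{v_1,\ldots,v_i',\ldots,v_n}$ and $\hprism[n][n]{v_1,\ldots,v_i'',\ldots,v_n}$.

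Applying \cref{lemma-splitting-vector} with $k=n$ to these vectors gives the desired bound, with $C_n$ taken to be (a rename of) the constant furnished by that lemma. There is no genuine obstacle: all the real work sits in \cref{lemma-splitting-vector}, whose proof produced an explicit $(k+1)$-chain $Q$ in $\T[n]$ filling the difference via a prism-operator slide, and the present corollary is nothing more than the unwinding of the definition of $\hrect{\,\cdot\,}$ in the canonical-basis case. Consequently the proof I would write is a single short paragraph stating this reduction.
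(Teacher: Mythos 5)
Your proposal is correct and matches the paper exactly: the paper presents \cref{step-splitting-rect} as ``a direct application of the previous lemma in the particular case in which parallelogram-like cycles are rectangle-like,'' which is precisely the specialization $v_j\coloneqq a_je_j$ that you spell out.
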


\subsection{Slim parallelogram-like cycles}\label{sec-slim}
In this subsection we deal with parallelogram-like cycles $\hprism{v_1,\ldots,v_k}$ whose generating vectors $v_1,\ldots,v_k\in \Z^n$ are linearly dependent. Another way to state this condition is by requiring that there exists a map $\iota\colon \T[k-1]\to \T[n]$ for which $\hprism{v_1,\ldots,v_k}$ is in the image of the map $\iota_*\colon \zcycles k {\T[k-1]}\to\zcycles{k}{\T[n]}$. 
As $\zhomol{k}{\T[k-1]}$ is trivial, these kinds of cycles are boundaries. 
In this subsection we control the filling norm of these parallelogram-like cycles in terms of their sizes.

We start with $2$-dimensional cycles in $S^1$.
\begin{lemma}\label{sistemareS1}
    There exists a constant $C_2>0$ such that for any nonzero integers $a,l$ it holds that
    \[\fnorm {\hprism[1][2]{a,l}}\le C_2\log_2(al)+C_2.\]
\end{lemma}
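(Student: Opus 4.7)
I would argue by induction on $\max(|a|,|l|)$, combining an explicit one-step 3-simplex filling with the splitting and rearranging lemmas already established. First, using the negation and swap identities of Lemma~\ref{lemma-rearranging-vectors}, I reduce to the case $a\ge l>0$ at additive cost $C$. The base case $a=l$ is trivial: a direct computation from the definition gives $\hprism[1][2]{l,l}=[0,l,2l]-[0,l,2l]=0$ identically.

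The key ingredient is the explicit identity
\[
\partial\bigl((\pi_1)_*\str_1(0,l,a,a+l)\bigr)=\hprism[1][2]{a-l,l}-\hprism[1][2]{a,l},
\]
where $\pi_1\colon\R\to S^1$ is the quotient map. To check it, one expands the boundary of the 3-simplex $[0,l,a,a+l]$ in $\R$ into its four 2-faces. The translation-invariance in Remark~\ref{rmk: translation invariant} identifies $[[l,a,a+l]]$ with $[[0,a-l,a]]$ in $S^1$ (since $l\in\Z$), and the remaining three faces recombine precisely into $\hprism[1][2]{a-l,l}-\hprism[1][2]{a,l}$ after pairing. This provides a one-step Euclidean reduction $(a,l)\leadsto(a-l,l)$ at filling cost exactly $1$.

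Iterating this reduction naively yields a bound linear in $\lfloor a/l\rfloor$, which is too weak. To get the logarithmic bound, I would organize the reductions dyadically along the binary expansion of the quotient $q=\lfloor a/l\rfloor$, combining the elementary 3-simplex fillings with the splitting lemma (Lemma~\ref{lemma-splitting-vector}) and the exact identity $\hprism[1][2]{l,l}=0$ so that shared sub-fillings are reused rather than duplicated. This gives $\fnorm{\hprism[1][2]{a,l}-\hprism[1][2]{a\bmod l,l}}\le C\log_2(a/l)+C$. Applying the swap identity and iterating the fast Euclidean algorithm on the pair $(l,a\bmod l)$ terminates in $O(\log\min(a,l))$ steps whose log-weighted contributions telescope to the desired $C_2\log_2(al)+C_2$.

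The main obstacle is the logarithmic-cost per-step reduction. A naive binary splitting gives $T(m)\le 2T(m/2)+O(1)$ for $T(m)=\fnorm{\hprism[1][2]{ml,l}}$, which unfolds to a linear bound in $m$. Achieving $T(m)=O(\log m)$ instead requires a more delicate construction that reuses sub-fillings across the dyadic decomposition and crucially exploits the fact that $\hprism[1][2]{l,l}$ vanishes \emph{exactly}, not merely up to bounded error; this is where the bulk of the combinatorial work lies.
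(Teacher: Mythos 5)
Your explicit $3$-simplex identity is correct: expanding $\partial(\pi_1)_*\str_1(0,l,a,a+l)$ and using translation-invariance to identify $[l,a,a+l]$ with $[0,a-l,a]$ indeed gives $\hprism[1][2]{a-l,l}-\hprism[1][2]{a,l}$ at cost $1$, and $\hprism[1][2]{l,l}=0$ is also a correct observation. However, the proposal as written does not contain a proof of the lemma. You correctly identify that the naive recursion from splitting is $T(m)\le 2T(m/2)+O(1)$, which unrolls to linear, and you defer the step that would replace it by $T(m)\le T(m/2)+O(1)$ to ``a more delicate construction that reuses sub-fillings.'' This is precisely the nontrivial content of the lemma, and your listed tools do not supply it: the splitting lemma inherently doubles the number of summands, the Euclidean step reduces $a$ by one unit of $l$ at cost exactly $1$ (so at most linearly), and $\hprism[1][2]{l,l}=0$ only helps once you already hit equality of the two generators. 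None of these gives a single-application, constant-cost operation that makes the geometric size drop by a constant factor.

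The paper closes exactly this gap with a different ingredient, Claim~\ref{claim-double-and-halve}: for $y\in 2\Z$, $\fnorm{\hprism[1][2]{x,y}-\hprism[1][2]{2x,y/2}}\le C_2$. This is a genuine geometric statement about $2$-chains on $S^1$ (cited from \cite{bertolotti2024integral}), not a formal consequence of splitting and the $3$-simplex reduction, and it is what turns the recursion into $T\le T(\text{half})+O(1)$. Combined with the Dehn-twist claim (Claim~\ref{claim-dehn}) to fix the residues mod $4$, it drives the pair $(x_i,y_i)$ from $(1,l)$ to $(2^M,0)$ in $O(\log l)$ steps of constant cost; the general $(a,l)$ case then halves $a$ while stockpiling the odd remainders as slim strips $\hprism[1][2]{1,2^i l}$, recombined at the end by splitting. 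Unless you can prove a double-and-halve claim (or an analogue) from your ingredients, the dyadic reuse you invoke does not exist, and the bound stays linear. So the proposal is a useful outline, and your Euclidean-step identity is a clean alternative to the Dehn-twist claim for the unit-step reduction, but as it stands there is a genuine gap at the crux of the argument.
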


The following claims, together with \cref{lemma-splitting-vector} and \cref{lemma-zero-vector}, provide paths with controlled length inside the graph of cycles from the parallelogram-like cycle $\hprism[1][2]{a,l}$ to the zero chain.
\begin{claim}\label{claim-dehn}
    There exists a constant $C_2$ such that for every $x,y\in \Z$ and $k\in\{0,1,2,3\}$ it holds that $\fnorm{\hprism[1][2] {x,y} -\hprism[1][2] {x,y-kx}}\leq C_2$.
\end{claim}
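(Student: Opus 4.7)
The plan is to reduce the claim to a statement about finitely many fixed boundaries in $\T[2]$, using naturality of parallelogram-like cycles under linear maps of tori together with the fact that pushforwards are non-increasing for the filling norm.

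Concretely, I would introduce the map $\iota\colon \T[2]\to \T[1]=S^1$ induced by the linear map $\R^2\to \R$ sending $e_1\mapsto x$ and $e_2\mapsto y$. From the explicit formula $\hprism[n][2]{v,w}=[\mathbf 0,v,v+w]-[\mathbf 0,w,v+w]$ and the fact that linear maps send straight simplices to straight simplices (cf.\ \cref{rmk: translation invariant}), a direct unwinding of definitions gives
\[\iota_*(\hprism[2][2]{e_1,e_2})=\hprism[1][2]{x,y}\quad\text{and}\quad \iota_*(\hprism[2][2]{e_1,e_2-ke_1})=\hprism[1][2]{x,y-kx}.\]

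Next, by \cref{paral-homology}, since $\det\begin{pmatrix}1&0\\0&1\end{pmatrix}=\det\begin{pmatrix}1&-k\\0&1\end{pmatrix}=1$, both chains $\hprism[2][2]{e_1,e_2}$ and $\hprism[2][2]{e_1,e_2-ke_1}$ represent the fundamental class of $\T[2]$; their difference is therefore a boundary in $\T[2]$ with a finite filling norm $D_k$ that depends only on $k\in\{0,1,2,3\}$ and \emph{not} on $x,y$. Setting $C_2\coloneqq \max\{D_0,D_1,D_2,D_3\}$ and invoking that $\iota_*$ is norm non-increasing on filling norms would yield
\[\fnorm{\hprism[1][2]{x,y}-\hprism[1][2]{x,y-kx}}=\fnorm{\iota_*\bigl(\hprism[2][2]{e_1,e_2}-\hprism[2][2]{e_1,e_2-ke_1}\bigr)}\leq D_k\leq C_2,\]
as required.

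The key conceptual point is recognizing that the entire family of differences, parametrized by $(x,y)\in\Z^2$, is the pushforward of only four fixed chains living in $\T[2]$; once this observation is in place, no serious obstacle remains beyond the routine verification of the naturality identity above. I do not expect any technical difficulty, since the matrix $\begin{pmatrix}1&-k\\0&1\end{pmatrix}$ is unimodular for every $k$ and all constants involved are finite by the very definition of the filling norm on a fixed boundary.
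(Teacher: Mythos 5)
Your proposal is correct and is essentially the same argument as the paper's, only phrased differently. The paper packages the two fixed chains in $\T[2]$ as $c_2=\hrect[2]{1,1}=\hprism[2][2]{e_1,e_2}$ and $(\phi^{-k})_*(c_2)$, where $\phi$ is the Dehn twist along the curve $t\mapsto (t,0)$; but $(\phi^{-k})_*(c_2)$ is precisely your $\hprism[2][2]{e_1,e_2-ke_1}$, since $\phi^{-k}$ is induced by the unimodular matrix $\begin{pmatrix}1&-k\\0&1\end{pmatrix}$, and the paper's map $p_{x,y}\colon\T[2]\to S^1$, $(s,t)\mapsto xs+yt$, is precisely your $\iota$. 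The remaining steps -- that both chains are fundamental cycles (hence cobordant), that only four values of $k$ occur so one can take a maximum, and that pushforward is non-increasing for the filling norm -- coincide with the paper's.
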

\begin{proof}
    Set $c_2=\hrect[2]{1,1}$, and consider the Dehn twist $\phi\colon \T\to \T$  on the torus $\T$ along the curve $\gamma\colon S^1\to \T[2]$ given by $\gamma(t)=(t,0)$ (where we are using the identification $S^1= \R/\Z$ and $\T=\R^2/\Z^2$). As $c_2$ and $(\phi^{-k})_*(c_2)$ are fundamental cycles for $\T$, they are cobordant.
    We set
    \[C_2=\max_{k=0,1,2,3}\big\{\fnorm{c_2-(\phi^{-k})_*(c_2)}\big\}.\] 

    If $p_{x,y}\colon \T \to S^1$  denotes the map defined by $p((s,t))=xs+yt$, then
    \[
        \fnorm{\hprism[1][2] {x,y} -\hprism[1][2] {x,y-kx}}
        =\fnorm{(p_{x,y})_*(c_2-(\phi^{-k})_*(c_2))}
        \leq C_2
    \]
    holds for any $k=0,1,2,3$.
\end{proof}

\begin{claim}\label{claim-double-and-halve}
    There exists a constant $C_2$ such that for every $x\in \Z$ and $y\in2\Z$ it holds that $\fnorm{\hprism[1][2]{x,y} -\hprism[1][2]{2x,y/2}}\leq C_2$.
\end{claim}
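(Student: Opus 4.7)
The plan is to use the splitting lemma (\cref{lemma-splitting-vector}) twice, with the observation that both $\hprism[1][2]{x,y}$ and $\hprism[1][2]{2x,y/2}$ are close (in filling norm) to the same chain $2 \cdot \hprism[1][2]{x, y/2}$.

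First, apply \cref{lemma-splitting-vector} with $k=2$ to $\hprism[1][2]{x,y}$, splitting the second generating integer as $y = y/2 + y/2$ (this is legitimate since $y \in 2\Z$). This gives a constant $V_2$ (depending only on the dimension $k=2$) with
\[
\fnorm{\hprism[1][2]{x,y} - \hprism[1][2]{x,y/2} - \hprism[1][2]{x,y/2}} \leq V_2.
\]
Second, apply the same lemma to $\hprism[1][2]{2x,y/2}$, splitting the first generating integer as $2x = x + x$:
\[
\fnorm{\hprism[1][2]{2x,y/2} - \hprism[1][2]{x,y/2} - \hprism[1][2]{x,y/2}} \leq V_2.
\]

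Both expressions place the chains within filling distance $V_2$ of the common chain $2\,\hprism[1][2]{x,y/2}$. By the triangle inequality for the filling seminorm, we therefore obtain
\[
\fnorm{\hprism[1][2]{x,y} - \hprism[1][2]{2x,y/2}} \leq 2V_2,
\]
and setting $C_2 = 2V_2$ concludes the proof. There is no real obstacle here; the statement is exactly set up so that two applications of the splitting lemma in orthogonal ``directions'' (doubling one coordinate while halving the other) funnel both chains through the same intermediate object $2\,\hprism[1][2]{x,y/2}$. Note in particular that no assumption on the signs or sizes of $x$ and $y$ is needed beyond $y$ being even, which is exactly what allows the halving step to land in $\Z$ so that the splitting lemma applies.
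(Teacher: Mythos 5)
Your proof is correct, and it is noteworthy that it is genuinely different from what the paper does: the paper does not prove \cref{claim-double-and-halve} at all but defers to an external reference (the proof of Step~2 in the earlier Frigerio--Bertolotti paper), which predates the splitting machinery of \cref{lemma-splitting-vector} and proceeds by a direct, hands-on construction of a filling. Your argument instead reuses \cref{lemma-splitting-vector} twice---once splitting $y=y/2+y/2$ in the second slot and once splitting $2x=x+x$ in the first slot---so that both $\hprism[1][2]{x,y}$ and $\hprism[1][2]{2x,y/2}$ sit within filling distance $V_2$ of the common chain $2\,\hprism[1][2]{x,y/2}$, and the triangle inequality for the filling seminorm finishes the job with $C_2=2V_2$. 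Two small checks are implicit and worth noting: both applications of the splitting lemma require $y/2\in\Z$ (ensured by $y\in 2\Z$) and $x\in\Z$, and applying the lemma in the first slot requires the reordering provided by \cref{lemma-rearranging-vectors}, but this is exactly how the splitting lemma is stated (for a general index $i$), so no gap arises; moreover all the chains involved are $2$-cycles in $S^1$ and hence boundaries since $\zhomol 2{S^1}=0$, so the filling norms in the triangle inequality are all well defined. The upshot is that your route makes the claim self-contained within the current paper, which is arguably an improvement over citing out.
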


This claim was already proved by Frigerio and the first author \cite[proof of Step 2]{bertolotti2024integral}.

Now we have all the ingredients to prove \cref{sistemareS1}.
\begin{proof}[Proof of \cref{sistemareS1}]
    Up to applying \cref{lemma-rearranging-vectors}, we can assume that  $a,l$ are two positive integers.
    We begin by showing that $\norm {\hprism[1][2] {1,l}}\leq  C_2\log_2 l+C_1+2C_2$.

    Set $x_0=1,y_0=l$. For $i\geq 0$, if $y_i \geq x_i$ we set 
    \[(x_{i+1},y_{i+1})= \left(2x_i, \frac{y_i-k_ix_i}2\right),\] 
    where $k_i\in \{0,1,2,3\}$ and $k_i\equiv y_i/2^i$ $\mathrm{mod}\ 4$.

    At each step $i$ we have $x_i=2^i$ and $y_i\leq l/2^{i}$, so that there exists an integer $M\leq 1+\frac{\log_2l}2$ such that $y_M< x_M$.

    Moreover, at the $i$-th step of the iteration, $y_i$ is an integer divisible by $2^i$ (so that the sequence is well-defined):
    indeed, this is true for $i=0$. Suppose by induction that for $1 \leq i\leq M$ the element $y_{i-1}$ is an integer divisible by $2^{i-1}$; by definition of $k_{i-1}$, there exists an integer $h$ such that $y_{i-1}=(k_{i-1}+4h)2^{i-1}$, so that 
    \[y_i=\frac{y_{i-1}-k_{i-1}x_{i-1}}{2}= 2^{i}h.\]
    In particular, as $0\leq y_M< x_M=2^M$ and $y_M$ is divisible by $2^M$, we have $y_M=0$.

    Now, by \cref{claim-dehn} and \cref{claim-double-and-halve} above we have that for every $i< M$ it holds that
    \begin{multline*}
        \fnorm{\hprism[1][2] {x_i,y_i} -\hprism[1][2]{x_{i+1},y_{i+1}}}\leq\\
        \leq \fnorm{\hprism[1][2] {x_i,y_i} -\hprism[1][2] {x_i,y_i-k_ix_i}}
        +\norm{\hprism[1][2] {x_i,y_i-k_ix_i}- \hprism[1][2]{x_{i+1},y_{i+1}}}
        \leq 2C_2,
    \end{multline*}
    and thus, by \cref{lemma-zero-vector}, we have that
    \begin{align*}
        \fnorm{\hprism[1][2] {1,l}}\leq   & \left(\sum_{i=0}^{M-1} \fnorm{\hprism[1][2] {x_i,y_i}-\hprism[1][2] {x_{i+1},y_{i+1}}}\right) + \fnorm{\hprism[1][2] {x_M,0}}\\
                                    \leq  & 2MC_2 + C_1 
                                    \leq    C_2\log_2 l+C_1+2C_2.
    \end{align*}

    Let us now consider the general case, with $a>1$ and $l\geq 1$. We are going to prove that there is some positive integer $L\le 2al$ for which $\fnorm{\hprism[1][2] {a,l} - \hprism[1][2] {1,L}}$ can be controlled (up to constants) by $\log_2(al)$. Then, since 
    \[\fnorm{\hprism[1][2]{a,l}}\le \fnorm{\hprism[1][2] {a,l} - \hprism[1][2] {1,L}}+\fnorm{\hprism[1][2] {1,L}},\]
    we can use the first part of the proof on $\hprism[1][2]{1,L}$ to conclude. 
    
    The idea is similar to the previous part of the proof: 
    we consider a recursive sequence of parallelogram-like chains $s_i=\hprism[1][2] {a_i,2^il_i}$, where $a_i$ is the integral part $\lfloor a_{i-1}/2\rfloor$ of $a_{i-1}/2$; if $a_{i-1}$ is odd we also get a remainder $r_i=\hprism[1][2]{1,2^il_i}$, intuitively obtained by cutting a ``slim'' parallelogram from $s_{i-1}$. At the end of this recursion, we get a chain obtained by the sum of the remainders $r_i$ with a chain $s_N$; all of them are parallelogram-like chains of type $\hprism[1][2] {1,x}$. Thanks to \cref{lemma-splitting-vector}, the sum of these parallelogram-like cycles is close (with respect to the filling norm) to a single parallelogram-like cycle $\hprism[1][2] {1,L}$.

    Consider the following sequence of numbers
    \[
        \begin{cases}
            a_0=a,\\
            a_i=\frac{a_{i-1}}{2} &\text{ if } a_{i-1} \text{ is even},\\
            a_i=\frac{a_{i-1}-1}{2} &\text{ if } a_{i-1} \text{ is odd.}
        \end{cases}
    \]
    Let $N$ be the integer for which $a_N=1$. Notice that $a_i\le a/2^{i}$ for all $i$, and thus $N\le \log_2(a)$.
    Let $I_e$ and $I_o$ be the sets of indices $i$ (with $0\leq i< N$) for which $a_i$ is even or odd respectively.

    Let us set
    \begin{align*}
        s_i     &=\hprism[1][2] {a_i,2^il}                &\text{for } 0\le i\le N,\\
	    %s_i'   &=0,2^ia,l_i-1+2^ia]-[0,l_i-1,l_i-1+2^ia]=P(2^ia,l_i-1)         &\forall i\in I_o\\
        r_i     &=\hprism[1][2]{1,2^il}    &\text{for } i\in I_o. 
    \end{align*}
    Note that by \cref{claim-double-and-halve} for $i\in I_e$ it holds that
    \[
        \fnorm{s_i-s_{i+1}}  =
        \fnorm{\hprism[1][2]{a_i,2^il}-\hprism[1][2] {\frac{a_{i}}2,2^{i+1}l}}
        \leq C_2,
    \]
    while for $i\in I_o$, by \cref{lemma-splitting-vector} and \cref{claim-double-and-halve}, we have
    \begin{align*}
        \fnorm{s_i-r_i-s_{i+1}} 
            \leq    & + \fnorm{\hprism[1][2]{a_{i},2^{i}l} -\hprism[1][2]{1,2^{i}l} - \hprism[1][2] {a_{i}-1,2^{i}l}} \\
                    & + \fnorm{\hprism[1][2]{{a_{i}-1},2^il} - \hprism[1][2]{\frac{a_{i}-1}2,2^{i+1}l}} \\
            \leq    &  2C_2.
    \end{align*}
    If we set
    \[L\coloneqq l\left(2^N+\sum_{i\in I_o}2^i\right)<l\cdot2^{N+1}\leq 2la,\]
    %\le a(2^{k+1}-1)\] 
    then by inductively applying \cref{lemma-splitting-vector}, one also gets 
    \begin{align*}
        \norm{\hprism[1][2] {1,L} -s_N -\sum_{i\in I_o}r_i}[\fil,\Z]\leq C_2 \cdot|I_o|.
    \end{align*}

    Thus, by noticing that
    \begin{multline*}
        \hprism[1][2] {a,l} -\hprism[1][2] {1,L}   
        =  \sum_{i\in I_e}(s_i-s_{i+1}) + \sum_{i\in I_o}(s_i-s_{i+1}-r_i)-\left(\hprism[1][2] {1,L} -s_N-\sum_{i\in I_o} r_i \right),
    \end{multline*}
    we get
    \begin{align*}
        \fnorm{\hprism[1][2] {a,l}}
        \leq & \fnorm{\hprism[1][2] {a,l}-\hprism[1][2] {1,L}} +\fnorm{\hprism[1][2] {1,L}} \\
        \leq & + \sum_{i\in I_e}\fnorm{s_i-s_{i+1}} + \sum_{i\in I_o}\fnorm{s_i-s_{i+1}-r_i} \\
             & + \fnorm{\hprism[1][2] {1,L} -s_N-\sum_{i\in I_o} r_i}+ \fnorm{\hprism[1][2] {1,L}} \\
        \leq & C_2\cdot |I_e| + 2C_2\cdot |I_o| + C_2\cdot |I_o| +  C_2\log_2 L+C_1+2C_2\\
        \leq & 4C_2N+C_2\log(2al)+C_1+2C_2\\
        \leq & 4C_2\log_2 a+C_2\log(2al)+C_1+2C_2,
    \end{align*}
    as desired.
    
\end{proof}

We now prove the same statement in any dimension.
\begin{lemma}\label{lemma-slim-parallelogram-hdim}
    There exists a constant $C_k$ such that for any integers $k> n\geq 1$ and any $v_1,\ldots, v_k\in \Z^n$ it holds that
    \[\fnorm {\hprism[n]{v_1,\ldots,v_k}}\leq C_k\cdot\log_2 (\max \{\norm{v_1}[\infty],\ldots,\norm{v_k}[\infty]\})+C_k. \]
\end{lemma}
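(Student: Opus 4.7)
I plan to prove the lemma by induction on $k$, with base case $k = 2$ (which forces $n = 1$) supplied by \cref{sistemareS1}. For $k \geq 3$, the inductive step splits into two regimes, according to how $n$ compares to $k$. In the easy regime $k \geq n + 2$, one has $k - 1 > n$, so by the inductive hypothesis $\hprism[n][k-1]{v_1, \ldots, v_{k-1}}$ is a boundary whose filling norm is at most $C_{k-1} \log_2(\max_i \norm{v_i}[\infty]) + C_{k-1}$. Writing $\hprism[n][k]{v_1, \ldots, v_k} = \po{v_k}(\hprism[n][k-1]{v_1, \ldots, v_{k-1}})$ and applying \cref{lemma-prism-op-on-boundary} transfers this bound to dimension $k$ at the cost of a multiplicative factor $k + 1$, yielding the constant $C_k = (k+1) C_{k-1}$ in this regime.

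The main case is $k = n + 1$. Here my plan is to first decompose each vector $v_i = \sum_{j=1}^{n} a_{ij} e_j$ along the standard basis of $\Z^n$ via iterated application of \cref{lemma-splitting-vector}. After a finite number (depending only on $n$) of splitting steps, each introducing an additive error of bounded filling norm, the cycle $\hprism[n][n+1]{v_1, \ldots, v_{n+1}}$ is expressed, up to an additive constant in filling norm, as a sum of $n^{n+1}$ summands of the form
\[
\hprism[n][n+1]{a_{1, j_1} e_{j_1}, a_{2, j_2} e_{j_2}, \ldots, a_{n+1, j_{n+1}} e_{j_{n+1}}},
\]
indexed by tuples $(j_1, \ldots, j_{n+1}) \in \{1, \ldots, n\}^{n+1}$. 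By the pigeonhole principle, in each such tuple at least two indices coincide, say $j_r = j_{r'} = j$, so two of the generating vectors of the corresponding summand are scalar multiples of the same basis vector $e_j$. Using \cref{lemma-rearranging-vectors} to move these two vectors into the first two positions (at constant cost in filling norm), the resulting $2$-cycle $\hprism[n][2]{a_{r, j} e_j, a_{r', j} e_j}$ factors through the circle via the map $\psi \colon S^1 \to \T[n]$ defined by $\psi(t) = t e_j$, explicitly as $\psi_*(\hprism[1][2]{a_{r, j}, a_{r', j}})$. Since $\psi_*$ does not increase the filling norm, \cref{sistemareS1} yields a bound of order $\log_2(\max_i \norm{v_i}[\infty])$ for this $2$-cycle. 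Iterated application of \cref{lemma-prism-op-on-boundary} through the remaining $n - 1$ untouched vectors preserves this bound up to a multiplicative factor depending only on $n$, so each of the $n^{n+1}$ summands is bounded by $C' \log_2(\max_i \norm{v_i}[\infty]) + C'$ for a constant $C'$ depending only on $n$. Summing these contributions gives the desired $C_k \log_2(\max_i \norm{v_i}[\infty]) + C_k$ bound.

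The main technical obstacle I anticipate is the careful bookkeeping of the splitting step: keeping track of how the $n^{n+1}$ summands combine with their additive splitting errors, and verifying that all the constants—which grow rapidly in $n$—can be absorbed into a single final constant $C_k$. A minor but related point is the choice of a sufficiently large $C_k$ that simultaneously handles both regimes of the induction, which requires taking the maximum of $(k+1) C_{k-1}$ and the constant arising in the $k = n+1$ case.
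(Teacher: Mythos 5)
Your proposal is correct, and the $k \geq n+2$ regime (prism operator plus induction on $k$) agrees with the paper's. For the crucial $k = n+1$ case, however, you take a genuinely different route. The paper proceeds by induction on $n$: it first establishes a claim that the bound holds whenever some $n$ of the $n+1$ vectors are linearly dependent (by viewing the slim subcycle inside a lower-dimensional subtorus and invoking the inductive hypothesis for $n-1$), and then handles the general case by decomposing $v_{n+1} = v_{n+1}^\parallel + v_{n+1}^\perp$ with respect to $\Span(v_n)$ and $\Span(v_1, \ldots, v_{n-1})$, so that a single application of \cref{lemma-splitting-vector} reduces to two cycles covered by the claim. Your argument instead decomposes every $v_i$ into its $n$ coordinate components via iterated splitting, producing $n^{n+1}$ summands whose generating vectors are all scalar multiples of standard basis vectors, and then observes via the pigeonhole principle that each summand must contain two vectors parallel to the same $e_j$; after rearranging these into the first two slots, the $2$-cycle factors through the standard coordinate circle $\psi(t) = te_j$, and \cref{sistemareS1} together with iterated prism operators (\cref{lemma-prism-op-on-boundary}) gives the bound. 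Your approach is more brute-force but avoids the induction on $n$ entirely in this case, and it sidesteps the geometric subtlety in the paper's argument of coordinatizing the lower-dimensional subtorus containing a linearly dependent tuple; the price is larger (but still $k$-only) constants, roughly $n^{n+1}(n+1)^{n-1}$ versus the paper's factorial-type growth. One small point you should make explicit: when some coordinate $a_{r,j}$ or $a_{r',j}$ vanishes, \cref{sistemareS1} does not apply directly (it requires nonzero integers), but then the summand contains a zero generating vector and is handled by \cref{lemma-rearranging-vectors} together with \cref{lemma-zero-vector}, which gives an even better (constant) bound.
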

\begin{proof}
    We first prove the statement when $k=n+1$ by induction on $n$: \cref{sistemareS1} already gives the statement on $\T[1]=S^1$ since $\log_2(al)\le2\log_2(\max\{|a|,|l|\})$, so let us fix $n\geq 2$ and assume that there exists a constant $V_{n}$ as in the statement; the goal is to prove the result in dimension $n+1$. 

    \begin{claim}\label{claim-sustet-dependent-vector}
        If $v_1, \ldots,\widehat v_i,\ldots, v_{n+1}$ are linearly dependent for some $i=1,\ldots,n+1$, then 
        \[
            \fnorm{\hprism[n][n+1]{v_1,\ldots, v_{n+1}}}
            \leq (n+2)V_n\cdot\log_2(\max \{\norm{v_1}[\infty],\ldots,\norm{v_{n+1}}[\infty]\})+(n+2)V_n+C_{n+1}.
        \]
    \end{claim}
    Indeed, in this case, the par\-al\-lel\-o\-gram-like cycle $\hprism[n][n]{v_1,\ldots,\widehat{v}_i,\ldots,v_{n+1}}$ is contained in some torus of dimension $n-1$ inside $\T[n]$. By inductive hypothesis, \cref{lemma-prism-op-on-boundary,lemma-rearranging-vectors}, we have
    \begin{align*}
        \fnorm{\hprism[n][n+1]{v_1,\ldots, v_{n+1}}}
        &\leq\fnorm{\hprism[n][n+1]{v_1,\ldots,\widehat v_i,\ldots, v_{n+1},v_i}}+C_{n+1}\\
        &=\fnorm{\po{v_{i}}(\hprism[n][n]{v_1,\ldots,\widehat v_i,\ldots,v_{n+1}})}+C_{n+1}\\
        &\leq(n+2)\fnorm{\hprism[n][n]{v_1,\ldots,\widehat v_i,\ldots,v_{n+1}}}+C_{n+1}\\
        &\leq (n+2)V_n\cdot\log_2(\max \{\norm{v_1}[\infty],\ldots,\norm{v_{n+1}}[\infty]\})+(n+2)V_n+C_{n+1},
    \end{align*}
    proving the claim.

    Let us now consider $v_1,\ldots, v_{n+1}\in \Z^{n}$, with $v_1, \ldots, v_{n}$ linearly independent, and let $v_{n+1}=v_{n+1}^\parallel+v_{n+1}^\perp$, where $v_{n+1}^\parallel$ is parallel to $v_n$, while $v_{n+1}^\perp$ is contained in the subspace generated by $v_1,\ldots,v_{n-1}$. Then, as the sets $\{v_1,\ldots,v_{n-1},v_{n+1}^\perp\}$ and $\{v_2,\ldots,v_{n},v_{n+1}^\parallel\}$ consist of linearly dependent vectors, one gets from \cref{lemma-splitting-vector}, \cref{claim-sustet-dependent-vector}, and the triangular inequality that
    \begin{multline*}
        \fnorm{\hprism[n][n+1]{v_1,\ldots, v_{n+1}}}\leq\\
        \begin{aligned}
            \leq& + \fnorm{\hprism[n][n+1]{v_1,\ldots,v_{n+1}} - \hprism[n][n+1]{v_1,\ldots,v_n,v_{n+1}^\perp}-\hprism[n][n+1]{v_1,\ldots,v_{n},v_{n+1}^\parallel}} \\
                & +\fnorm{\hprism[n][n+1]{v_1,\ldots,v_n,v_{n+1}^\perp}}+\fnorm{\hprism[n][n+1]{v_1,\ldots,v_{n},v_{n+1}^\parallel}}\\
            \leq& C_{n+1}+ 2(n+2)V_n\cdot\log_2(\max \{\norm{v_1}[\infty],\ldots,\norm{v_{n+1}}[\infty]\})+2(n+2)V_n+2C_{n+1},
        \end{aligned}
    \end{multline*}
    as desired.

    Consider now $k\geq n+1$,  and by induction we suppose that  there exists a constant $V_{k}$ as in the statement. Then, for any $v_1,\ldots,v_{k+1}\in\Z^n$ one has
    \begin{align*}
        \fnorm{\hprism[n]{v_1,\ldots,v_{k+1}}}
        =&\fnorm{\po{v_{k+1}}(\hprism[n][k]{v_1,\ldots,v_{k}})}\\
        \leq& (k+2)\fnorm{\hprism[n]{v_1,\ldots,v_{k}}}\\
        \leq& (k+2)V_{k}\log_2(\max \{\norm{v_1}[\infty],\ldots,\norm{v_{k}}[\infty]\})+(k+2)V_{k},
   \end{align*}
   as desired.
\end{proof}

\subsection{Rectangle-like cycles}\label{sec-rect}
 In this subsection we explore properties that hold specifically for rect\-an\-gle-like cycles.

The next lemma shows that any $n$-dimensional parallelogram-like cycle in $\T[n]$ is close to a sum of rectangle-like cycles. Recall that the sizes of a rectangle-like cycle $\hrect{a_1,\ldots,a_n}$ are the integers $a_1,\ldots,a_n\in\Z$. 
\begin{lemma}\label{lemma-hprism-sum-rect}
    For any positive integer $n\in\N$ there exists a constant $C_n$ such that the following holds: for any $v_1,\ldots,v_n\in \Z^n$ there are $n!$ rectangle-like cycles $r_1,\ldots,r_{n!}$ whose sizes are bounded from above by $\max\{\norm{v_1}_{\infty},\ldots, \norm{v_n}_{\infty}\}$ and such that
    \[
        \fnorm{
            \hprism[n][n]{v_1,\ldots,v_n}-\sum_{i=1}^{n!}\epsilon_ir_i
        }\leq C_n\cdot\log_2 (\max\{\norm{v_1}_{\infty},\ldots, \norm{v_n}_{\infty}\})+C_n,
    \]
    for some $\epsilon_i\in\{\pm 1\}$.
\end{lemma}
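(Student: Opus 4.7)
The plan is to expand each generating vector in the canonical basis, write $v_i = \sum_{j=1}^n a_{ij}e_j$ with $|a_{ij}|\leq \norm{v_i}[\infty]$, and iteratively apply the splitting lemma (\cref{lemma-splitting-vector}) to decompose $\hprism[n][n]{v_1,\ldots,v_n}$ as a signed sum of ``monomial'' parallelogram-like cycles of the form $\hprism[n][n]{a_{1j_1}e_{j_1},\ldots,a_{nj_n}e_{j_n}}$ indexed by tuples $(j_1,\ldots,j_n)\in \{1,\ldots,n\}^n$. Concretely, I would split $v_1$ into its $n$ coordinates by $n-1$ applications of \cref{lemma-splitting-vector}, then repeat this on each resulting cycle for $v_2$, and so on. After a total of $n^n-1$ splittings, this yields
\[
    \hprism[n][n]{v_1,\ldots,v_n} \;=\; \sum_{(j_1,\ldots,j_n)\in\{1,\ldots,n\}^n} \hprism[n][n]{a_{1j_1}e_{j_1},\ldots,a_{nj_n}e_{j_n}} \;+\; b,
\]
where $b$ is a boundary with $\fnorm{b}$ bounded by a constant depending only on $n$, since each splitting contributes an error bounded by the (dimension-dependent) constant of \cref{lemma-splitting-vector}.

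Next, I would partition the $n^n$ monomial terms according to whether $(j_1,\ldots,j_n)$ is a permutation of $(1,\ldots,n)$. For each permutation $\sigma \in S_n$, the corresponding cycle $\hprism[n][n]{a_{1\sigma(1)}e_{\sigma(1)},\ldots,a_{n\sigma(n)}e_{\sigma(n)}}$ can be reordered by \cref{lemma-rearranging-vectors} to match the canonical order $e_1,\ldots,e_n$ of the generators, at the cost of a constant filling error $C_n$ and picking up a sign $\sgn(\sigma)$. After reordering, the resulting cycle is exactly the rectangle-like cycle $\sgn(\sigma)\cdot \hrect{a_{\sigma^{-1}(1),1},\ldots,a_{\sigma^{-1}(n),n}}$, whose sizes $|a_{\sigma^{-1}(i),i}|$ are all bounded by $\max_i \norm{v_i}[\infty]$. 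This produces the $n!$ required rectangle-like cycles $r_\sigma$ with signs $\epsilon_\sigma=\sgn(\sigma)$, contributing a total rearrangement error bounded by $n!\cdot C_n$.

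For each of the remaining $n^n-n!$ tuples $(j_1,\ldots,j_n)$, at least one basis index is missing from $J=\{j_1,\ldots,j_n\}$, so the generating vectors all lie in the coordinate subspace $\R^J \subset \R^n$ of dimension $m=|J|<n$. Hence the cycle $\hprism[n][n]{a_{1j_1}e_{j_1},\ldots,a_{nj_n}e_{j_n}}$ is the pushforward via the (filling-norm non-increasing) inclusion $\T^J \hookrightarrow \T[n]$ of a parallelogram-like cycle of dimension $n$ in the torus $\T^J$ of dimension $m<n$. Since $n>m$, the slim parallelogram lemma (\cref{lemma-slim-parallelogram-hdim}, with cycle-dimension $n$ and torus-dimension $m$) gives a bound of $C_n\log_2(\max_i\norm{v_i}[\infty])+C_n$ on its filling norm.

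Summing the three sources of error (the $O(1)$ splitting error, the $O(1)$ rearrangement error, and the $(n^n-n!)$ slim contributions each of order $\log_2(\max_i\norm{v_i}[\infty])$) yields the desired estimate after absorbing constants into a new $C_n$. The main bookkeeping subtlety is to verify that both the splitting constant in \cref{lemma-splitting-vector} and the rearranging constant in \cref{lemma-rearranging-vectors} depend only on the cycle dimension $n$ and not on the vectors, so that only the slim terms contribute to the logarithmic part of the bound; this is built into those lemmas, so the argument goes through.
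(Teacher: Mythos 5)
Your argument is correct, uses the same toolkit (the splitting lemma, the rearranging lemma, and the slim-parallelogram lemma), and lands in the right place, but it is organized differently from the paper's proof. The paper proceeds by induction on the ambient dimension $n$: it decomposes each $v_i$ into $v_i^\perp + v_i^\parallel$ (orthogonal resp.\ parallel to $e_n$), obtains $2^n$ terms indexed by the power set of $\{1,\dots,n\}$, treats the $n$ singleton-indexed terms by applying the inductive hypothesis in the embedded $(n-1)$-torus and then applying a prism operator, and handles the remaining (linearly dependent) tuples by \cref{lemma-slim-parallelogram-hdim}; the $n!$ rectangle-like cycles arise recursively as $n\cdot(n-1)!$. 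You instead unroll the decomposition completely in one shot, expanding each $v_i$ in coordinates to get $n^n$ monomial parallelogram-like cycles, identifying the $n!$ permutation terms directly as (signed, reordered) rectangle-like cycles, and discarding the $n^n - n!$ non-permutation terms via the slim lemma since their generating vectors lie in a proper coordinate subtorus. Your bookkeeping is sound: the $n^n - 1$ binary splittings, $n!$ reorderings, and $n^n - n!$ slim contributions each carry constants depending only on $n$ (or, for the slim terms, an additional $\log_2(\max_i\norm{v_i}[\infty])$), and the size bound $|a_{\sigma^{-1}(i),i}|\le\max_j\norm{v_j}[\infty]$ on the permutation terms is immediate. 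The flat decomposition avoids the nested induction and makes the count of $n!$ rectangles transparent, at the cost of producing exponentially many error terms (each still $O(1)$); the paper's recursive version keeps the decomposition coarser at each stage, which is perhaps easier to verify line by line but makes the origin of the $n!$ count less immediate.
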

\begin{proof}
    The argument is by induction on the dimension: the statement is clearly true in dimension $1$.
    We fix $n>1$ and suppose  that there is a constant $V_m$ as in the statement in any dimension $1\leq m<n$. Let $e_1,\ldots,e_n$ be the canonical basis of $\R^n$, and let $v_1,\ldots,v_n\in \Z^n$. 
    For any $i=1,\ldots, n$, let $v_i^\perp$ be the projection of $v_i$ on the plane orthogonal to $e_n$ and $v_i^{\parallel}=v_i-v_i^\perp$.

    Let $\mathcal{P}_n$ be the power set of $\{1,\ldots,n\}$ and let $\mathcal{U}_n\subset \mathcal P_n$ be the  subset consisting only of one-element subsets.
    For any $S\in \mathcal{P}_n$ we denote by $\mathbf{v}^S$ the $n$-tuple $(v_1^{\theta_1},\ldots,v_n^{\theta_n})$, where $\theta_i=\parallel$ if $i\in S$, otherwise $\theta_i=\perp$.

    For any $S_i\coloneqq\{i\}\in\mathcal {U}_n$, the $n$-tuple $\mathbf v^{S_i}$ has $n-1$ vectors $v_1^\perp,\ldots,v_{i-1}^\perp,v_{i+1}^\perp,\ldots, v_n^\perp$ contained in the $(n-1)$-dimensional subspace of $\R^n$ orthogonal to $e_n$. 
    Thus, the parallelogram-like cycle $\hprism[n][n-1]{v_1^\perp,\ldots,v_{i-1}^\perp,v_{i+1}^\perp,\ldots, v_n^\perp}$ can be seen as a parallelogram-like cycle contained in the embedded torus $\T[n-1]$ inside $\T[n]$ corresponding to the subspace of $\R^{n}$ orthogonal to $e_{n}$. 
    By applying the inductive hypothesis, we get a chain $c_i'\in \zcycles {n-1}{\T[n-1]}\subset\zcycles {n-1}{\T[n]}$
    that is a sum (or difference) of at most $(n-1)!$ rectangle-like cycles whose sizes are bounded from above by 
    \[\max\left\{\norm{v_1^\perp}_{\infty},\ldots,\norm{v_{i-1}^\perp}_{\infty},\norm{v_{i+1}^\perp}_{\infty},\ldots, \norm{v_n^\perp}_{\infty}\right\}\le\max\left\{\norm{v_1}_{\infty},\ldots, \norm{v_n}_{\infty}\right\}\]
    and such that
    \[
        \fnorm{\hprism[n][n-1]{v_1^\perp,\ldots,v_{i-1}^\perp,v_{i+1}^\perp,\ldots, v_n^\perp}-c_i'}\leq V_{n-1} \log_2 (\max\{\norm{v_1}_{\infty},\ldots, \norm{v_n}_{\infty}\})+V_{n-1}.
    \]

    Let us set $ c_i \coloneqq-\po {v_i^{\parallel}}(c_i')$ for $i=1,\ldots,n-1$ and $c_n\coloneqq\po {v_n^{\parallel}}(c_n')$.
    Being $c_i'$ a sum (or difference) of rectangle-like cycles of dimension $n-1$ whose canonical lifts are contained in the subspace orthogonal to $e_n$, the cycle $ c_i $ is still a sum (or difference) of rectangle-like cycles for any $i=1,\ldots,n$. 
    By \cref{lemma-prism-op-on-boundary} and \cref{lemma-rearranging-vectors}, for $i\neq n$ we have that
    \begin{align*}
        \fnorm{\hprism[n][n]{\mathbf{v}^{S_i}}-c_i}
        \leq &+\fnorm{\hprism[n][n]{\mathbf{v}^{S_i}} + \hprism[n][n]{v_1^\perp,\ldots, v_{i-1}^\perp,v_{i+1}^\perp,\ldots, v_n^\perp,v_i^\parallel}}\\
             &+\fnorm{-\hprism[n][n]{v_1^\perp,\ldots, v_{i-1}^\perp,v_{i+1}^\perp,\ldots, v_n^\perp,v_i^\parallel}-c_i}\\
        \leq & C_{n} + \fnorm{-\po{v_i^{\parallel}}\big(\hprism[n][n]{v_1^\perp,\ldots, v_{i-1}^\perp,v_{i+1}^\perp,\ldots, v_n^\perp}-c_i'\big)}\\
        \leq & C_n +(n+1)\fnorm{\hprism[n][n]{v_1^\perp,\ldots, v_{i-1}^\perp,v_{i+1}^\perp,\ldots, v_n^\perp}-c_i'}\\
        \leq &C_n +(n+1) V_{n-1} \log_2(\max\{\norm{v_1}_{\infty},\ldots, \norm{v_n}_{\infty}\})+(n+1) V_{n-1},
    \end{align*}
    while for $i=n$ we have (analogously as before) that
        \begin{align*}
        \fnorm{\hprism[n][n]{\mathbf{v}^{S_n}}-c_n}
        \leq &\fnorm{\hprism[n][n]{v_1^\perp,\ldots, v_{n-1}^\perp,v_n^\parallel}-c_n}\\
        \leq &(n+1) V_{n-1} \log_2(\max\{\norm{v_1}_{\infty},\ldots, \norm{v_n}_{\infty}\})+(n+1) V_{n-1}.
    \end{align*}
    Thus, by setting $c\coloneqq \sum_{i=1}^{n} c_i$ and by applying iteratively the triangular inequality, one gets 
\begin{multline*}
	\fnorm{\hprism[n][n]{v_1,\ldots,v_n}-c}\leq \\
	\begin{aligned}
		\leq&\fnorm{\hprism[n][n]{v_1,\ldots,v_n}-\sum_{i=1}^n\hprism[n][n]{\mathbf v^{S_i}}}+\sum_{i=1}^n\fnorm{\hprism[n][n]{\mathbf v^{S_i}}- c_i}\\
		\leq&+\fnorm{\hprism[n][n]{v_1,\ldots,v_n}-\sum_{S\in \mathcal P_n}\hprism[n][n]{\mathbf v^S}} + \sum_{S\in \mathcal P_n\setminus \mathcal U_n}\fnorm{\hprism[n][n]{\mathbf v^S}}\\
		&+nC_n +n(n+1) V_{n-1} \log_2(\max\{\norm{v_1}_{\infty},\ldots, \norm{v_n}_{\infty}\})+n(n+1) V_{n-1}.
	\end{aligned}
\end{multline*}

	We deal with  $\fnorm{\hprism[n][n]{v_1,\ldots,v_n}-\sum_{S\in \mathcal P_n}\hprism[n][n]{\mathbf v^S}}$  by applying iteratively \cref{lemma-splitting-vector} to $\hprism[n][n]{v_1,\ldots,v_n}=\hprism[n][n]{v_1^{\parallel}+v_1^\perp,\ldots,v_n^{\parallel}+v_n^\perp}$ for each generating vector.
    
    For any set $S\in \mathcal P_n\setminus \mathcal U_n$, the $n$-tuple $\mathbf v^S$ consists of linearly dependent vectors whose $\ell^\infty$-norm is bounded from above by $\max\{\norm{v_1}_{\infty},\ldots, \norm{v_n}_{\infty}\}$ and thus, by applying \cref{lemma-slim-parallelogram-hdim}, we also get an upper bound for $\fnorm{\hprism[n][n]{\mathbf v^S}}$ for each $S\in P_n\setminus \mathcal U_n$. 
    
    Putting everything together we get the statement.
\end{proof}

Rectangle-like cycles are useful because they are close to rectangle-like cycles with all sizes but one equal to $1$.

\begin{lemma}\label{lemma-rect-with-size1-hdim}
    There exists a constant $C_n$ such that for any integers $a_1,\ldots, a_n\in \Z$, not all zero, it holds that
    \[ \fnorm{\hrect{a_1,\ldots,a_n}-\hrect{\prod_{i=1}^{n}a_i,1,\ldots,1}}\leq C_n\log_2(\max\{|a_1|,\ldots,|a_n|\})+C_n.\]
\end{lemma}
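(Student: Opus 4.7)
The plan is induction on $n$. The base case $n=1$ is trivial. Throughout the inductive step I may assume every $a_i\neq 0$ (if some $a_i=0$, then both sides have bounded filling norm by \cref{lemma-zero-vector}) and every $a_i>0$ (by sign-flipping via \cref{lemma-rearranging-vectors}). Set $M=\max_i|a_i|$, $A=\prod_{i<n}a_i$, and $B=a_n$. The inductive step splits into two phases: first, reduce the ``$(n-1)$-dimensional face'' to arrive at $\hrect[n]{A,1,\ldots,1,B}$; second, shift the mass of $B$ from the last coordinate into the first, reaching $\hrect[n]{AB,1,\ldots,1}$.

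For the first phase, I use the identity $\hrect[n]{a_1,\ldots,a_n}=P_{a_n e_n}(\iota_*(\hrect[n-1]{a_1,\ldots,a_{n-1}}))$, where $\iota\colon T^{n-1}\hookrightarrow T^n$ is the inclusion into the subtorus spanned by $e_1,\ldots,e_{n-1}$. The inductive hypothesis provides an $O(\log M)$-norm filling in $T^{n-1}$ of $\hrect[n-1]{a_1,\ldots,a_{n-1}}-\hrect[n-1]{A,1,\ldots,1}$; pushing this filling forward by $\iota_*$ (which is isometric) and then by $P_{a_n e_n}$ (whose operator norm is at most $n+1$ and which commutes with the boundary, since its underlying homotopy is the identity at both endpoints) yields an $O(\log M)$-norm filling in $T^n$ of $\hrect[n]{a_1,\ldots,a_n}-\hrect[n]{A,1,\ldots,1,B}$.

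The second phase rests on a doubling estimate: when $B$ is even, $\fnorm{\hrect[n]{A,1,\ldots,1,B}-\hrect[n]{2A,1,\ldots,1,B/2}}=O(1)$. This follows from two applications of \cref{step-splitting-rect}: splitting the last coordinate as $B=B/2+B/2$ shows that $\hrect[n]{A,1,\ldots,1,B}$ lies within a constant of $2\hrect[n]{A,1,\ldots,1,B/2}$, and splitting the first coordinate as $2A=A+A$ shows the same for $\hrect[n]{2A,1,\ldots,1,B/2}$. Equipped with doubling, I apply the logarithmic reduction scheme from the proof of \cref{sistemareS1}: starting from $(A_0,B_0)=(A,B)$, apply doubling when $B_i$ is even; when $B_i$ is odd, first peel off $\hrect[n]{A_i,1,\ldots,1,1}=\hrect[n]{A_i,1,\ldots,1}$ via \cref{step-splitting-rect} and then double the remainder $(A_i,B_i-1)$. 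After $O(\log B)$ iterations one has $B_N=1$, leaving a sum of $O(\log B)$ rectangle-like cycles of the form $\hrect[n]{X,1,\ldots,1}$ whose first entries sum to $AB$ (by the invariant that, if $s_i$ denotes the sum of the peeled first entries, then $s_i+A_iB_i=AB$ throughout the iteration); a final $O(\log B)$ reverse applications of \cref{step-splitting-rect} merge these summands into $\hrect[n]{AB,1,\ldots,1}$, for a total cost of $O(\log B)\leq O(\log M)$. The main hurdle is bookkeeping rather than insight: ensuring that the invariant $s_i+A_iB_i=AB$ is preserved at each step and verifying that the cost of each doubling move is independent of $|A_i|$, even though $A_i$ grows like $A\cdot 2^i$ during the iteration.
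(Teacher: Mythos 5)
Your proof is correct, and it takes a genuinely different route from the paper's. The paper's base case is $n=2$, proved by four applications of the ``sliding'' claim (\cref{step-slide-paral-along-a-side}), which in turn rests on \cref{sistemareS1} and hence on the $S^1$-Dehn-twist and double-and-halve machinery; its inductive step for $n\ge 3$ first absorbs $a_n$ into $a_1$ via a $2$-dimensional subtorus (using the $n=2$ case embedded), and only then applies the $(n-1)$-dimensional hypothesis. You instead start the induction at the trivial $n=1$ case, first apply the $(n-1)$-dimensional hypothesis to the prefix $(a_1,\ldots,a_{n-1})$ inside the coordinate subtorus and push forward along $\po{a_n e_n}$ (which commutes with $\partial$ and has operator norm $\le n+1$ on fillings, exactly as in \cref{lemma-prism-op-on-boundary}), and then collapse $(A,1,\ldots,1,B)$ to $(AB,1,\ldots,1)$ by a doubling/peeling iteration built entirely on \cref{step-splitting-rect}. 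Your doubling estimate $\fnorm{\hrect[n]{A,1,\ldots,1,B}-\hrect[n]{2A,1,\ldots,1,B/2}}\le 2C_n$ really does follow from two applications of \cref{step-splitting-rect} plus the triangle inequality, and the invariant $s_i+A_iB_i=AB$ together with the final $O(\log B)$ merges is sound; the per-step cost is a constant depending only on $n$, not on $A_i$, precisely because \cref{step-splitting-rect} has that property. The upshot is that you bypass \cref{step-slide-paral-along-a-side} (and the $S^1$ estimates of \cref{sistemareS1}) entirely, giving a more uniform inductive step at the price of some bookkeeping in the binary-expansion iteration; the paper's route is more geometric but needs the extra sliding claim as an ingredient. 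Both yield the same $O(\log\max_i|a_i|)$ bound.
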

This lemma is proved via an inductive argument on the dimension $n$. In order to prove the base case $n=2$ we need the following claim, which is a consequence of \cref{lemma-splitting-vector}, and allows us to ``slide'' a parallelogram-like cycle along one of its sides (see \cref{fig-rect-unit-height}).

\begin{claim}\label{step-slide-paral-along-a-side}
    There exists a constant $C$ such that for any vectors $v,w\in \Z^2$ and any integers $n,d\in \Z\setminus\{0\}$ with $\frac nd v\in \Z^2$, it holds that 
    \[\fnorm{\hprism[2][2]{v,w+\frac nd v} -\hprism[2][2] {v,w} }\le  C \log_2|dn|+C\]
    and
    \[\fnorm{\hprism[2][2] {w,v} - \hprism[2][2] {w+\frac nd v,v}}\le C\log_2|dn|+C.\]
\end{claim}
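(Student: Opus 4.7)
The plan is to use the splitting lemma (\cref{lemma-splitting-vector}) to isolate the ``sliding'' contribution and then to bound its filling norm by $C\log_2|dn|+C$. Applied to the second generator of $\hprism[2][2]{v,w+\tfrac nd v}$ (respectively the first generator of $\hprism[2][2]{w+\tfrac nd v,v}$), that lemma gives
\[\fnorm{\hprism[2][2]{v,w+\tfrac nd v}-\hprism[2][2]{v,w}-\hprism[2][2]{v,\tfrac nd v}}\le C_2,\]
and an analogous identity for the swapped version. Using \cref{lemma-rearranging-vectors} to interchange the two generators (which costs only a constant), both inequalities reduce to the single estimate $\fnorm{\hprism[2][2]{v,\tfrac nd v}}\le C\log_2|dn|+C$.

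To prove this estimate, I first reduce the fraction $n/d$ (which only decreases $|dn|$), so I may assume $\gcd(n,d)=1$. The case $v=0$ is trivial; otherwise write $v=gv'$ with $v'\in\Z^2$ primitive and $g\ge 1$. The hypothesis $\tfrac nd v\in\Z^2$, combined with $\gcd(n,d)=1$ and primitivity of $v'$, forces $d\mid g$, so $g=dg'$ for some integer $g'\ge 1$ and $\tfrac nd v=g'n\,v'$. By B\'ezout there is $M\in\SL(2,\Z)$ with $Me_1=v'$, giving an automorphism $f_M$ of $\T[2]$ which sends $\hprism[2][2]{g'd\,e_1,g'n\,e_1}$ to $\hprism[2][2]{v,\tfrac nd v}$; since $f_M$ is a homeomorphism, the estimate becomes $\fnorm{\hprism[2][2]{g'd\,e_1,g'n\,e_1}}\le C\log_2|dn|+C$.

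The key step is then to remove the dependence on $g'$ through a covering-map argument. I consider the $g'$-fold cover $p\colon\T[2]\to\T[2]$ defined by $p(x_1,x_2)=(g'x_1,x_2)$: a direct computation on vertices yields $p_*(\hprism[2][2]{d\,e_1,n\,e_1})=\hprism[2][2]{g'd\,e_1,g'n\,e_1}$. Since $p$ is continuous, $p_*$ is norm non-increasing on fillings, so it suffices to bound $\fnorm{\hprism[2][2]{d\,e_1,n\,e_1}}$. This chain is supported on the embedded subtorus $S^1\times\{0\}\subset\T[2]$ and coincides there with $\hprism[1][2]{d,n}$, so norm non-increasingness of the inclusion combined with \cref{sistemareS1} yields $\fnorm{\hprism[2][2]{d\,e_1,n\,e_1}}\le C_2\log_2|dn|+C_2$, as required. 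The main obstacle is precisely this covering step: a naive direct projection of $\hprism[2][2]{v,\tfrac nd v}$ to a $1$-dimensional subtorus followed by \cref{sistemareS1} would give a bound of the form $C\log_2(g'^2|dn|)$, whose dependence on $g'=\gcd(v)/d$ is not admissible; the cover $p$ is exactly what unwinds the extra factor $g'^2$.
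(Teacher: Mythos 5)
Your proof is correct and its overall skeleton matches the paper's: you split off $\hprism[2][2]{v,\tfrac nd v}$ via \cref{lemma-splitting-vector} (and \cref{lemma-rearranging-vectors} for the second identity), and then reduce the estimate on $\fnorm{\hprism[2][2]{v,\tfrac nd v}}$ to \cref{sistemareS1}. Where you differ is the mechanism for that reduction. The paper uses a single map $\iota_v\colon S^1\to\T^2$ given by $\iota_v(t)=\tfrac td v$, observes $(\iota_v)_*(\hprism[1][2]{d,n})=\hprism[2][2]{v,\tfrac nd v}$, and invokes norm non-increasingness of pushforward. You instead decompose $v=d g' v'$ with $v'$ primitive and factor this through three maps: an $\SL(2,\Z)$-automorphism, the $g'$-fold self-cover $p$ of $\T^2$, and the inclusion of a $1$-subtorus. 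This is a (correct) factorization of the paper's $\iota_v$, since $\iota_v(t)=t g' v'$ is literally the $g'$-fold self-cover of $S^1$ followed by the embedding $t\mapsto t v'$. Your extra step of reducing $n/d$ to lowest terms is actually a genuine improvement: without it, $\tfrac 1d v$ need not lie in $\Z^2$ and $\iota_v$ is not well-defined (take $v=(2,0)$, $d=4$, $n=2$), a detail the paper glosses over. One minor inaccuracy: your closing remark about a ``naive direct projection'' producing a bad $g'^2$ factor is comparing against a strategy nobody uses; both you and the paper push forward \emph{from} $S^1$ rather than project onto it, and the paper's $\iota_v$ already avoids any $g'$-dependence without needing the explicit cover $p$. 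So your proof is correct but slightly over-engineered in the covering step, while being more careful than the paper on the well-definedness point.
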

\begin{proof}
    Indeed, by the triangular inequality we have 
    \begin{multline*}
        \fnorm{\hprism[2][2]{v,w+\frac nd v} -\hprism[2][2] {v,w} }\leq \\
        \leq \fnorm{\hprism[2][2]{v,w+\frac nd v} - \hprism[2][2] {v,w}-\hprism[2][2] {v,\frac nd v}}      
        +\fnorm{\hprism[2][2] {v,\frac nd v}},
    \end{multline*}
    where 
    \[\fnorm{\hprism[2][2]{v,w+\frac nd v} - \hprism[2][2] {v,w}-\hprism[2][2] {v,\frac nd v}}\leq C_2 \] 
    due to \cref{lemma-splitting-vector}.
    Moreover, if we define $\iota_{v}\colon S^1\to \T$ by  $\iota_v(t)=\frac t d v$ (where we are using the identifications $S^1=\R/\Z$ and $\T=\R^2/\Z^2$), then $(\iota_v)_*(\hprism[1][2] {d,n})=\hprism[2][2] {v,\frac nd v}$.
    By \cref{sistemareS1} we get
        \[\fnorm{\hprism[2][2] {v,\frac nd v}}\leq \fnorm {\hprism[1][2] {d,n}} \leq C_2 \log_2|{dn}|+C_2 .\]
\end{proof}
We can now proceed with the proof of \cref{lemma-rect-with-size1-hdim}.

\begin{proof}[Proof of \cref{lemma-rect-with-size1-hdim}]
    The statement is trivial for $n=1$. Moreover, notice that if $a_i=0$ for some $i$, then the statement is a consequence of \cref{lemma-rearranging-vectors} and \cref{lemma-zero-vector}.
    Let us then suppose $n\geq 2$ and $a_1,\ldots, a_n\in\Z\setminus\{0\}$. 
    
    As said before, the proof is an induction on $n$ with base case $n=2$.
    Let us start by proving the result for $n=2$ in the case $a_1\neq 1$. We apply \cref{step-slide-paral-along-a-side} four times as follows (see \cref{fig-rect-unit-height}):
    \begin{enumerate}
        \item with the notation of \cref{step-slide-paral-along-a-side}, consider $v=a_1e_1$, $w=a_2e_2$, $n=a_2$ and $d=a_1$; then, recalling that $\hrect[2]{a_1,a_2}= \hprism[2][2]{a_1e_1,a_2e_2}$, we have that 
        \begin{align*}
            \fnorm{\hrect[2] {a_1,a_2}-\hprism[2][2] {a_1e_1,a_2e_1+a_2e_2}}\le  C \log_2 |a_1a_2|+C;
        \end{align*}
        
        \item consider $v= {a_1e_1}$, $w={a_2e_1+a_2e_2}$, $n=-1$ and $d=a_2$; then
        \[
        \fnorm{\hprism[2][2]{a_1e_1,a_2e_1+a_2e_2}-\hprism[2][2]{(a_1-1)e_1-e_2,a_2e_1+a_2e_2}}\le  C \log_2 |a_2|+C;
        \]	
        
        \item consider $v= {(a_1-1)e_1-e_2}$, $w={a_2e_1+a_2e_2}$, $n=a_2$ and $d=1$; then 
        \[
        \fnorm{\hprism[2][2]{(a_1-1)e_1-e_2,a_2e_1+a_2e_2}-\hprism[2][2]{(a_1-1)e_1-e_2,a_1a_2e_1}}\le C \log_2 |a_2|+C;
        \]
        
        \item\label{item-to-skip-in-special-case} consider $v= {(a_1-1)e_1-e_2}$, $w=a_1a_2e_1$, $n=-a_1+1$ and $d=a_1a_2$; then
            \[
        \fnorm{\hprism[2][2]{(a_1-1)e_1-e_2,a_1a_2e_1}-\hprism[2][2]{-e_2,a_1a_2e_1}}\le C \log_2 |(1-a_1)a_1a_2|+C.
        \]
    \end{enumerate}
    Moreover, by applying twice \cref{lemma-rearranging-vectors} we have that
    \[\fnorm{\hprism[2][2]{-e_2,a_1a_2e_1}+\hprism[2][2]{a_1a_2e_1,e_2}}\leq 2C_2.\]

    Putting everything together, by the triangle inequality we get 
    \begin{multline*}
        \fnorm{\hrect[2]{a_1,a_2}-\hrect[2]{a_1a_2,1}}
        \leq \\
        {\begin{aligned}
            \leq &+\fnorm{\hrect[2] {a_1,a_2}-\hprism[2][2] {a_1e_1,a_2e_1+a_2e_2}}\\
                & +\fnorm{\hprism[2][2] {a_1e_1,a_2e_1+a_2e_2}-\hprism[2][2] {(a_1-1)e_1-e_2,a_2e_1+a_2e_2}}\\
                &+\fnorm{\hprism[2][2] {(a_1-1)e_1-e_2,a_2(e_1+e_2)}-\hprism[2][2] {a_1e_1-e_1-e_2,a_1a_2e_1}}\\
                & +\fnorm{\hprism[2][2] {(a_1-1)e_1-e_2,a_1a_2e_1}-\hprism[2][2] {-e_2,a_1a_2e_1}}\\
                & +\fnorm{\hprism[2][2] {-e_2,a_1a_2e_1}-\hrect[2]{a_1a_2,1}}\\
            \leq &\ 4C \log_2 |(1-a_1)a_1a_2|+4C+2C_2\\
            \leq &\ 4C \log_2 (2|a_1a_2|^2)+4C+2C_2,
        \end{aligned}}
    \end{multline*}
    proving the statement for $a_1\neq 1$ and $n=2$.

    On the other hand, if $a_1=1$ and $n=2$, then we can apply the same proof without the fourth application of \cref{step-slide-paral-along-a-side} (that is \cref{item-to-skip-in-special-case}).

    Let us now consider $n\geq 3$, suppose there is a constant $V_m$ in any dimension $m<n$ as in the statement, and consider $a_1,\ldots, a_n\in \Z\setminus\{0\}$. Then, it holds that
    \begin{multline*}
        \fnorm{\hrect{a_1,\ldots,a_n}-\hrect{\prod_{i=1}^{n}a_i,1,\ldots,1}}\leq\\
        {\begin{aligned}
            \leq&+ \fnorm{\hrect{a_1,\ldots,a_n}-\hrect{a_1a_n,a_2,\ldots, a_{n-1},1}}\\
                &+\fnorm{\hrect{a_1a_n,a_2,\ldots,a_{n-1},1}-\hrect{\prod_{i=1}^{n}a_i,1,\ldots,1}}.
        \end{aligned}}
    \end{multline*}
    By the $2$-dimensional case (where we regard $\hprism[n][2]{a_1e_1,a_ne_n}$ and $\hprism[n][2]{a_1a_ne_1,e_n}$ as rectangle-like cycles contained in a $2$-dimensional torus embedded in $\T[n]$), \cref{lemma-rearranging-vectors} and an iterative application of the prism operator (\cref{lemma-paral-are-cycle}) we have that
    \begin{multline*}
        \fnorm{\hrect{a_1,\ldots,a_n}-\hrect{a_1a_n,a_2,\ldots, a_{n-1},1}}\leq\\
        {\begin{aligned}
            \leq & +\fnorm{\hrect{a_1,\ldots,a_n}+\hprism[n][n]{a_1e_1,a_ne_n,a_2e_2,\ldots, a_{n-1}e_{n-1}}}\\
                 & +\fnorm{\hprism[n][n]{a_1a_ne_1,e_n,a_2e_2,\ldots, a_{n-1}e_{n-1}}-\hprism[n][n]{a_1e_1,a_ne_n,a_2e_2,\ldots, a_{n-1}e_{n-1}}}\\
                 & +\fnorm{-\hprism[n][n]{a_1a_ne_1,e_n,a_2e_2,\ldots,a_{n-1}e_{n-1}}-\hrect{a_1a_n,a_2,\ldots, a_{n-1},1}}\\
            \leq & 2C_n+\fnorm{\po{a_{n-1}e_n}\big(\cdots \big(\po{a_2e_2}(\hprism[n][2]{a_1e_1,a_ne_n}-\hprism[n][2]{a_1a_ne_1,e_n})\big)\cdots\big)}\\
            \leq & 2C_n+(n+1)!\cdot V_2\log_2(\max\{|a_1|,|a_n|\})+(n+1)!\cdot V_2.
        \end{aligned}}
    \end{multline*}

    On the other hand, as above, the parallelogram-like cycles $\hprism[n][n-1]{a_1a_n e_1,a_2e_2,\ldots,a_{n-1}e_{n-1}}$ and $\hprism[n][n-1]{a_1\cdots a_ne_1,e_2,\ldots,e_{n-1}}$ can be regarded as rectangle-like cycles contained in a $(n-1)$-dimensional torus embedded in $\T[n]$. By applying the inductive hypothesis one gets
    \begin{multline*}
        \fnorm{\hrect{a_1a_n,a_2,\ldots,a_{n-1},1}-\hrect{a_1\cdots a_n,1,\ldots,1}}=\\
        \begin{aligned}
            =&\fnorm{\po {e_n} \left(\hprism[n][n-1]{a_1a_n e_1,a_2e_2,\ldots,a_{n-1}e_{n-1}}-\hprism[n][n-1]{a_1\cdots a_ne_1,e_2,\ldots,e_{n-1}}\right)}\\
            \leq& (n+1)V_{n-1}\log_2(\max\{|a_1a_n|,|a_2|,\ldots,|a_n|\})+(n+1)V_{n-1}.
        \end{aligned}
    \end{multline*}
    Putting the three inequalities above together, we get the desired bound.
\end{proof}

    \begin{center}\begin{figure}[!h]
         %!TeX root=./../FVAnosov.tex
 
\centering
\tikzmath{\x=3;}
\tikzmath{\y=4;}
\begin{tikzpicture}[scale=0.6]
	\draw[->,line join=round, thick, color =gray!20,] (0,0) -- (\x+\y+1,0);
	\draw[->,line join=round, thick, color = gray!20] (0,0) -- (0,\y+1);
	\draw[gray!50,dashed] (\x+\y,0)--(\x+\y,\y);
	\draw[gray!50,dashed] (\y,0)--(\y,\y);
	\draw[gray!50,] (0,\y)--(\x+\y,\y);
	
	%\draw[line join=round,thick] (\x,0) -- (\x,\y) -- (0,\y) -- (0,0) -- (\x,0) -- (0,\y);
	
	\draw[line join=round,thick] (0,0) -- (\x,\y) -- (0,\y) -- (0,0) -- (\x,0) -- (\x,\y);
	
	\draw[line join=round,red] (0,0) -- (\x+\y,\y) -- (\y,\y) -- (0,0) -- (\x,0) -- (\x+\y,\y);
	
	\draw[] (\x,-0.3) node {\small $a_1$};
	\draw[] (\y,-0.3) node {\small $a_2$};
	\draw[] (\x+\y,-0.3) node {\small $a_1+a_2$};
	\draw[] (-0.3,\y) node {\small $a_2$};
	\begin{scope}[shift= {(\x+\y+3,0)}]
		\draw[->,line join=round, thick, color =gray!20,] (0,0) -- (\x+\y+1,0);
		\draw[->,line join=round, thick, color =gray!20,] (0,-1.5) -- (0,\y+1);
		\draw[gray!50,dashed] (\x+\y-1,0)--(\x+\y-1,\y-1);
		\draw[gray!50,dashed] (\x-1,0)--(\x-1,-1);
		\draw[gray!50,dashed] (0,\y)--(\y,\y);
		\draw[gray!50,dashed] (\y,0)--(\y,\y);
		\draw[gray!50,dashed] (0,\y-1)--(\y+\x-1,\y-1);
		\draw[line join=round,gray!50,dashed] (0,-1)--(\x-1,-1);
		\draw[line join=round,thick] (0,0) -- (\x+\y,\y) -- (\y,\y) -- (0,0) -- (\x,0) -- (\x+\y,\y);
		\draw[line join=round,red] (0,0) -- (\x+\y-1,\y-1) -- (\y,\y) -- (0,0) -- (\x-1,-1) -- (\x+\y-1,\y-1);
		\draw[line join=round] (\x-1.2,+0.3) node {\small $a_1-1$};
		\draw[line join=round] (\x+\y-1,-0.3) node {\small $a_1+a_2-1$};
		\draw[line join=round] (-0.3,\y) node {\small $a_2$};
		\draw[line join=round] (\y,-0.3) node {\small $a_2$};
		\draw[line join=round] (-0.8,\y-1) node {\small $a_2-1$};
		\draw[line join=round] (-0.3,-1) node {\small $-1$};
	\end{scope}
	\begin{scope}[shift= {(0,-7)}]
		\draw[->,line join=round, thick, color =gray!20,] (0,0) -- (\x*\y+\x,0);
		\draw[->,line join=round, thick, color =gray!20,] (0,-1.5) -- (0,\y+1);
		\draw[gray!50,dashed] (0,-1)--(\x-1,-1);
		\draw[gray!50,dashed] (\x*\y+\x-1,0) -- (\x*\y+\x-1,-1);
		\draw[gray!50,dashed] (\x-1,0)--(\x-1,-1);
		\draw[gray!50] (\y,\y) -- (\x*\y+\x-1,-1);
		\draw[line join=round,thick] (0,0) -- (\x+\y-1,\y-1) -- (\y,\y) -- (0,0) -- (\x-1,-1) -- (\x+\y-1,\y-1);
		\draw[line join=round,red] (0,0) -- (\x*\y+\x-1,-1) -- (\x*\y,0) -- (0,0) -- (\x-1,-1) -- (\x*\y+\x-1,-1);
		\draw[line join=round] (\x-1.2,+0.3) node {\small $a_1-1$};
		\draw[line join=round] (\x*\y+\x-0.7,+0.3) node {\small $a_1a_2+a_1-1$};
		\draw[line join=round] (\x*\y-0.3,+0.3) node {\small $a_1a_2$};
		\draw[line join=round] (-0.3,-1) node {\small $-1$};
	\end{scope}
	\begin{scope}[shift= {(0,-12)}]
		\draw[->,line join=round, thick, color =gray!20,] (0,0) -- (\x*\y+\x,0);
		\draw[->,line join=round, thick, color =gray!20,] (0,-1.5) -- (0,+1);
		\draw[line join=round,gray!50,dashed] (\x*\y+\x-1,0) -- (\x*\y+\x-1,-1);
		
		\draw[line join=round,thick] (0,0) -- (\x*\y+\x-1,-1) -- (\x*\y,0) -- (0,0) -- (\x-1,-1) -- (\x*\y+\x-1,-1);
		\draw[line join=round,red] (0,0) -- (\x*\y,-1) -- (\x*\y,0) -- (0,0) -- (0,-1) -- (\x*\y,-1);
		\draw[line join=round] (\x-1.2,+0.3) node {\small $a_1-1$};
		\draw[line join=round] (\x*\y+\x-.7,+0.3) node {\small $a_1a_2+a_1-1$};
		\draw[line join=round] (\x*\y-0.3,+0.3) node {\small $a_1a_2$};
		\draw[line join=round] (-0.3,-1) node {\small $-1$};
	\end{scope}
\end{tikzpicture}
        \caption{
            To prove \cref{lemma-rect-with-size1-hdim} for $n=2$, we apply the slidings of \cref{step-slide-paral-along-a-side} four times as shown in the picture, where at each step the black parallelogram is the starting one, and the red parallelogram is the one we obtain after the application of \cref{step-slide-paral-along-a-side}. \\
            Notice that in the case $a_1=1$, we already have a rectangle like cycle at the third application of \cref{step-slide-paral-along-a-side}.
        }\label{fig-rect-unit-height}
    \end{figure}\end{center}

\subsection{Proof of \cref{metteresuS1}}\label{sec-proof-metteresuS1}
Now that we have all the ingredients, we are ready to draw the path in the graph of cycles that goes from the parallelogram-like cycle $\hprism[n][n]{v_1,\ldots,v_n}$ to the rectangle-like cycle $\hrect {\det(A),1,\ldots,1}$, where $A$ is the matrix whose columns are given by $v_1,\ldots,v_n$.
This will prove \cref{metteresuS1}, which we recall in the following for the convenience of the reader.
\propupperbound*

The steps of this journey go as follows: we start from $\hprism[n][n]{v_1,\ldots,v_n}$ and go to a sum of rectangle-like cycles (\cref{lemma-hprism-sum-rect}); then, from each of these rectangle-like cycles we go to rectangle-like cycles of unit-height (\cref{lemma-rect-with-size1-hdim}), that we can sum to get a single rectangle-like cycle of unit-height (\cref{step-splitting-rect}).
The following proof makes the journey just described more precise.
\begin{proof}
    By \cref{lemma-hprism-sum-rect}, there exists a sum $\sum_{i=1}^{n!}\epsilon_ir_i'$, where $\epsilon_i\in\{\pm1\}$ and $r_i'$ is a rectangle-like cycle with sizes bounded from above by $\max\{\norm{v_1}_{\infty},\ldots, \norm{v_n}_{\infty}\}$ for any $i=1,\ldots, n!$, such that 
    \[
        \fnorm{
            \hprism[n][n]{v_1,\ldots,v_n}-\sum_{i=1}^{n!}\epsilon_i r_i'
        }\leq C_n\cdot\log_2 (\max\{\norm{v_1}_{\infty},\ldots, \norm{v_n}_{\infty}\})+C_n.
    \]

    By \cref{lemma-rect-with-size1-hdim}, for any $r_i'$ there exists a rectangle-like cycle 
    \[r_i\coloneqq\hrect{\ell_i,1,\ldots,1},\]
    %with $\ell_i\leq (\max\{\norm{v_1}_{\infty},\ldots, \norm{v_n}_{\infty}\})^n$, 
    for which 
    \[\fnorm{r_i'-r_i}\leq C_n\log_2(\max\{\norm{v_1}_{\infty},\ldots, \norm{v_n}_{\infty}\})+C_n.\]
    
    Moreover, by applying \cref{step-splitting-rect} iteratively to the sum $\sum_{i=1}^{n!} \epsilon_i r_i$, we obtain a rectangle-like cycle 
    \[R\coloneqq\hrect{L,1,\ldots,1},\]
    with $L=\sum_{i=1}^{n!} \epsilon_i\ell_i$, for which
    \[
        \fnorm{R-\sum_{i=1}^{n!} \epsilon_i r_i}\leq n!C_n.
    \]

    Thus, by the triangular inequality, we have
    \begin{align*}
        \fnorm{\hprism[n][n]{v_1,\ldots,v_n}-R}
        \leq&+\fnorm{\hprism[n][n]{v_1,\ldots,v_n}-\sum_{i=1}^{n!}\epsilon_i r_i'}\\
            &+\sum_{i=1}^{n!}\fnorm{\epsilon_i (r_i'-r_i)}+\fnorm{\sum_{i=1}^{n!} \epsilon_ir_i- R}\\
        \leq& (1+n!)C_n\log_2(\max\{\norm{v_1}_{\infty},\ldots, \norm{v_n}_{\infty}\})+(1+2n!)C_n.
    \end{align*}

    In particular, $\hprism[n][n]{v_1,\ldots,v_n}$ and $R$ are cobordant, so that if $A$ (resp.~$B$) denotes the matrix with columns $v_1,\ldots, v_n$ (resp.~$L e_1,e_2\ldots, e_n$), then by \cref{paral-homology} we get $\det(A)=\det (B)=L$. This concludes the proof.
\end{proof}

\addcontentsline{toc}{chapter}{Bibliography}

\bibliographystyle{alpha}
\bibliography{bibliografia} 
\nocite{*}

\end{document}